\newtheorem{thm}{Theorem}
\newtheorem{lem}[thm]{Lemma}
\newtheorem{prop}[thm]{Proposition}
\newtheorem{cor}[thm]{Corollary}
\theoremstyle{definition} 
\newtheorem{qu}[thm]{Question} 
\newtheorem{rmk}[thm]{Remark}
\theoremstyle{remark} 
\newtheorem*{nrem}{Remark}
\newenvironment{defn}[1][Definition.]{\begin{trivlist}
\item[\hskip \labelsep {\bfseries #1}]}{\end{trivlist}}
\newenvironment{rem}[1][Remark.]{\begin{trivlist}
\item[\hskip \labelsep {\bfseries #1}]}{\end{trivlist}}
\newcommand{\dmo}{\DeclareMathOperator}
\newcommand{\R}{\mathbb{R}}
\newcommand{\Q}{\mathbb{Q}}
\newcommand{\co}{\mathbb{C}}\newcommand{\Z}{\mathbb{Z}}
\newcommand{\al}{\alpha}\newcommand{\be}{\beta}\newcommand{\ga}{\gamma}\newcommand{\de}{\delta}\newcommand{\ep}{\epsilon}\newcommand{\si}{\sigma}
\newcommand{\Om}{\Omega}\newcommand{\De}{\Delta}\newcommand{\la}{\lambda}
\newcommand{\Ga}{\Gamma}
\newcommand{\wtil}{\widetilde}\newcommand{\ta}{\theta}\newcommand{\Lam}{\Lambda}
\newcommand{\cd}{\cdots}\newcommand{\ld}{\ldots}
\newcommand{\sbs}{\subset}\newcommand{\bs}{\backslash}\newcommand{\pa}{\partial}
\newcommand{\xra}{\xrightarrow}
\newcommand{\Lra}{\Leftrightarrow}
\newcommand{\ra}{\rightarrow}
\newcommand{\hra}{\hookrightarrow}
\newcommand{\bb}[1]{\mathbb{#1}}\newcommand{\ca}[1]{\mathcal{#1}}\newcommand{\ov}[1]{\overline{#1}}\newcommand{\mf}{\mathfrak}
\newcommand{\fr}[2]{\frac{#1}{#2}}
\newcommand{\ot}{\otimes}
\newcommand{\lan}{\langle}\newcommand{\ran}{\rangle}
\newcommand{\op}{\oplus}\newcommand{\til}{\tilde}
\newcommand{\ti}{\times}
\newcommand{\bsym}{\boldsymbol}
\dmo{\sgn}{sign}
\dmo{\we}{\wedge}
\dmo{\ind}{ind}\dmo{\Ind}{Ind}
\dmo{\bop}{\bigoplus}\dmo{\pic}{Pic}
\dmo{\coker}{coker}\dmo{\vol}{Vol}\dmo{\gal}{Gal}\dmo{\perm}{Perm}
\dmo{\tor}{Tor}\dmo{\ext}{Ext}\dmo{\Ext}{Ext}
\dmo{\aut}{aut}
\dmo{\Aut}{Aut}
\dmo{\inn}{Inn}\dmo{\var}{Var}
\dmo{\dep}{depth}\newcommand{\rest}[2]{#1\bigr\vert_{#2}}
\dmo{\ad}{ad}\dmo{\curl}{curl}
\dmo{\hy}{\bb H}\dmo{\Sl}{SL}
\dmo{\SO}{SO}
\dmo{\oO}{O}
\dmo{\psl}{PSL}
\dmo{\isom}{Isom}\dmo{\Isom}{Isom}
\dmo{\conf}{Conf}
\dmo{\stab}{Stab}\dmo{\Jac}{Jac }
\dmo{\diam}{diam}\dmo{\fix}{Fixed}\dmo{\Fix}{Fix}
\dmo{\injR}{injRad}\dmo{\Ad}{Ad}
\dmo{\esv}{ess-vol}\dmo{\out}{Out}\dmo{\Out}{Out}
\dmo{\nil}{Nil}\dmo{\sol}{Sol}
\dmo{\Div}{div}
\dmo{\SU}{SU}
\dmo{\SL}{SL}
\dmo{\SP}{SP}
\dmo{\Sp}{Sp}
\dmo{\rk}{rk}
\dmo{\rank}{rank}
\dmo{\psp}{PSp}\dmo{\psu}{PSU}
\dmo{\PU}{PU}\dmo{\pgl}{PGL}
\dmo{\Mod}{Mod}\dmo{\range}{Range}
\dmo{\eu}{eu}\dmo{\mi}{mi}
\dmo{\Log}{Log}\dmo{\supp}{supp}
\dmo{\maps}{Maps}\dmo{\Gr}{Gr}
\dmo{\Pin}{Pin}
\dmo{\Spin}{Spin}\dmo{\Str}{Str}
\dmo{\Sq}{Sq}\dmo{\Symp}{Symp}
\dmo{\pd}{PD}\dmo{\PD}{PD}\dmo{\sig}{Sig}
\dmo{\Set}{Set}\dmo{\Top}{Top}
\dmo{\ev}{ev}\dmo{\St}{St}
\dmo{\Pt}{Pt}\dmo{\pt}{pt}
\dmo{\colim}{colim }\dmo{\Pl}{PL}
\dmo{\String}{String}\dmo{\smear}{smear}
\dmo{\dev}{dev}
\dmo{\met}{Met}\dmo{\contact}{Contact}
\dmo{\teich}{Teich}\dmo{\Teich}{Teich}\dmo{\qi}{QI}
\dmo{\der}{Der}
\dmo{\cl}{Cliff}\dmo{\Cl}{Cl}
\dmo{\Pf}{Pf}
\dmo{\ch}{ch}\dmo{\diag}{diag}
\dmo{\grad}{grad}\dmo{\Char}{char}
\dmo{\spec}{Spec}\dmo{\Arg}{Arg}
\dmo{\rad}{rad}\dmo{\im}{Im}
\dmo{\Hom}{Hom}\dmo{\End}{End}
\dmo{\tr}{tr}\dmo{\id}{Id}
\dmo{\gl}{GL}
\dmo{\sym}{Sym}\dmo{\Sym}{Sym}
\dmo{\com}{Comm}
\dmo{\Lk}{Lk}
\dmo{\CAT}{CAT}
\dmo{\Rep}{Rep}
\dmo{\Conf}{Conf}
\dmo{\PConf}{PConf}
\dmo{\Push}{Push}
\dmo{\Cont}{Cont}
\dmo{\sm}{\setminus}
\dmo{\vn}{\varnothing}
\dmo{\disk}{\mathbb D}
\dmo{\Trd}{Trd}\dmo{\Mat}{Mat}
\dmo{\Riem}{Riem}
\dmo{\Diffn}{\Diff_0}\dmo{\diff}{diff}
\dmo{\Diff}{Diff}\dmo{\Diffplus}{Diff^{\scaleobj{0.8}{+}}}
\dmo{\homeo}{Homeo}
\dmo{\Homeo}{Homeo}\dmo{\Fr}{Fr}
\dmo{\rot}{rot}\dmo{\Emb}{Emb}
\dmo{\Ham}{Ham}\dmo{\Met}{Met}
\dmo{\Ein}{Ein}\dmo{\CP}{\co P}
\dmo{\Per}{Per}\dmo{\Ric}{Ric}
\newcommand{\C}{\mathbb C}\dmo{\Nrd}{Nrd}
\dmo{\Comp}{Comp}\dmo{\PSC}{PSC}
\dmo{\Cent}{Cent}\dmo{\Orb}{Orb}
\dmo{\aind}{a-ind}\dmo{\tind}{t-ind}
\dmo{\constant}{constant}
\dmo{\Td}{Td}
\dmo{\LMod}{LMod}
\dmo{\SMod}{SMod}
\dmo{\SDiff}{SDiff}
\dmo{\Br}{Br}
\dmo{\csch}{csch}
\dmo{\triv}{triv}
\dmo{\genus}{genus}
\dmo{\Homeq}{HomEq}
\dmo{\PP}{\mathbb{P}}
\dmo{\U}{U}
\dmo{\Gal}{Gal}
\dmo{\BDiff}{\wtil{\Diff}}
\dmo{\BAut}{\wtil{\Aut}}
\dmo{\Iso}{Iso}
\dmo{\codim}{codim}
\dmo{\II}{II}
\dmo{\I}{I}
\begin{document}

\title[Geometric cycles and characteristic classes of manifold bundles]{Geometric cycles and characteristic classes of\\ manifold bundles}

\author{Bena Tshishiku,\\with an appendix by Manuel Krannich}
\address{Department of Mathematics, Harvard University, Cambridge, MA 02138} \email{tshishikub@gmail.com}
\address{Centre for Mathematical Sciences, University of Cambridge, Cambridge CB3 0WB, UK} \email{krannich@dpmms.cam.ac.uk}


\date{\today}

\keywords{Arithmetic groups, characteristic classes, manifold bundles}

\begin{abstract} 
We produce new cohomology for non-uniform arithmetic lattices $\Ga<\SO(p,q)$ using a technique of Millson--Raghunathan. From this, we obtain new characteristic classes of manifold bundles with fiber a closed $4k$-dimensional manifold $M$ with indefinite intersection form of signature $(p,q)$. These classes are defined on finite covers of $B\Diff(M)$ and are shown to be nontrivial for $M=\#_g(S^{2k}\ti S^{2k})$. In this case, the classes produced live in degree $g$ and are independent from the algebra generated by the stable (i.e.\ MMM) classes. We also give an application to bundles with fiber a K3 surface.
\end{abstract}

\maketitle


\section{Introduction}

The starting point of this paper is the following new result about the cohomology of certain arithmetic groups $\Ga<\SO(p,q)$.

\begin{thm}\label{thm:cycle}
Fix $1\le p\le q$ such that $p+q\ge3$. Let $\Lam\sbs\R^{p+q}$ be a lattice with an integral bilinear form of signature $(p,q)$. Consider the group $\SO(\Lambda)$ of automorphisms of $\Lambda$ with determinant $1$. For every $n\ge1$, there is a finite-index subgroup $\Ga<\SO(\Lam)$ so that $\dim H^p(\Ga;\Q)\ge n$. 
\end{thm}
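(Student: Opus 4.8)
The plan is to build the desired cohomology from totally geodesic \emph{geometric cycles} in the locally symmetric manifold $\Ga\bs X$, where $X=\SO_0(p,q)/(\SO(p)\ti\SO(q))$ is the symmetric space, of dimension $pq$; this is the method of Millson--Raghunathan. An indefinite unimodular $\Z$-lattice of rank $p+q\ge 3$ is isotropic, so $\SO(\Lam)$ is a non-uniform lattice in $\SO(p,q)$. Passing to a torsion-free congruence subgroup $\Ga_0$ lying in $\SO_0(p,q)$, we may assume $M_0:=\Ga_0\bs X$ is an oriented finite-volume manifold with $H^p(\Ga_0;\Q)\cong H^p(M_0;\Q)$; since a finite-index subgroup of $\SO(\Lam)$ is all that is asked for, it suffices to produce, for each $N$, a finite-index subgroup of $\Ga_0$ with $\dim H^p(-;\Q)\ge N$.

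First I would construct the basic pair of cycles over $\Q$. Fix $v\in\Lam$ with $\lan v,v\ran<0$; then $v^\perp\cap\Lam$ has signature $(p,q-1)$, so it cuts out a lattice in $\SO(p,q-1)$ whose symmetric space $X_v\sbs X$ is totally geodesic of codimension $p$. In fact $X_v$ is the fixed locus of the reflection $\si_v$ in $v$ (an element of the commensurator of $\SO(\Lam)$ which, after a further finite-index passage, normalizes $\Ga_0$), and the normal bundle of $X_v$ in $X$ is the $(-1)$-eigenbundle of $d\si_v$; it has rank $p$, so --- and this is where the hypothesis that $p$ is odd enters --- $\si_v$ reverses the orientations both of this normal bundle and of $X$. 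For the dual cycle, choose a rank-$(p+1)$ sublattice $U\sbs\Lam$ of signature $(p,1)$ containing $v$, obtained by adjoining to $v$ a positive-definite rank-$p$ sublattice of $v^\perp$; then $U\cap\Lam$ cuts out a lattice in $\SO(p,1)$ whose symmetric space is a totally geodesic copy $Y\sbs X$ of $\hy^p$, of dimension $p$. Writing $V=\R v\perp(U\cap v^\perp)\perp U^\perp$ (with $U\cap v^\perp$ positive definite and $U^\perp$ negative definite), a direct computation in $T_xX=\Hom(W,W^\perp)$ shows that $X_v$ and $Y$ meet in the single point corresponding to $\R v\oplus U^\perp$ and that the intersection is transverse there, the two tangent spaces being complementary summands of $T_xX$.

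After possibly one more finite-index passage (so that the rank-$p$ normal bundle of $X_v$ becomes orientable over its image and $Y$ becomes oriented), these descend to geometric cycles $C_1\sbs M_0$ of codimension $p$ and $C_2\sbs M_0$ of dimension $p$. Because $C_1$ and $C_2$ are in general non-compact, I would work on the Borel--Serre bordification $\ov{M_0}$, on which both cycles extend to submanifolds-with-corners with boundary in $\pa\ov{M_0}$; after putting the two boundaries in general position in $\pa\ov{M_0}$ so that $C_1\cap C_2$ stays compact, $[C_1]$ defines by Poincar\'e--Lefschetz duality a class in $H^p(\ov{M_0};\Q)\cong H^p(\Ga_0;\Q)$, and its pairing against $[C_2]$ is the signed count of the intersection points. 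The orientation bookkeeping of the previous paragraph --- again where $p$ odd enters --- forces these local signs to be coherent (in the language of Rohlfs, this is a Lefschetz-number computation for $\bar\si_v$), so the pairing is a nonzero multiple of the number of intersection points, and hence $[C_1]\ne 0$ in $H^p(\Ga_0;\Q)$.

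Finally, to reach $\dim H^p\ge N$ I would pass to deep congruence subgroups $\Ga'\triangleleft\SO(\Lam)$. Over such a cover the preimages $\pi^{-1}C_1=\bigsqcup_i C_1^{\,i}$ and $\pi^{-1}C_2=\bigsqcup_j C_2^{\,j}$ break into many components --- their numbers grow like the index of $\SO(p,q-1)$ in $\SO(p,q)$ over the residue fields of the auxiliary primes, hence without bound --- and the pairing matrix $P_{ij}=C_1^{\,i}\cdot C_2^{\,j}$ is a nonnegative integral matrix, equivariant for the action of $Q=\SO(\Lam)/\Ga'$, with $\sum_{i,j}P_{ij}=|Q|\cdot(C_1\cdot C_2)\ne 0$. \textbf{The main obstacle} is to show that $\rk P\to\infty$ as the cover deepens: the intersection points form a single $Q$-orbit, so one has to show they are spread sparsely among the components of the two preimages; equivalently, one has to separate the lifted cycles by imposing congruence conditions at auxiliary primes, which rests on strong approximation for $\SO(\Lam)$ and is the technical heart of the Millson--Raghunathan method. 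Concretely I would choose anisotropic vectors $v_1,\dots,v_N$ and sublattices $U_1,\dots,U_N$ whose reductions at distinct auxiliary primes force $C_1^{(v_k)}$ to be disjoint from $C_2^{(v_l)}$ for $k\ne l$ in a common congruence cover while $C_1^{(v_k)}\cdot C_2^{(v_k)}\ne 0$, so that the resulting $N\ti N$ pairing matrix is nonsingular; then $[C_1^{(v_1)}],\dots,[C_1^{(v_N)}]$ are linearly independent in $H^p(\Ga';\Q)$. The remaining points --- that such $v_k$ and $U_k$ exist in any such $\Lam$, and that the cycles have finite volume so that the Borel--Serre argument applies --- are routine.
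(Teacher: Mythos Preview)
Your proposal has two genuine gaps, both of which the paper explicitly addresses.

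\textbf{Compactness of the dual cycle.} You pair the codimension-$p$ cycle $C_1$ coming from $\SO(p,q-1)$ with a $p$-dimensional cycle $C_2$ coming from $\SO(p,1)$ (a hyperbolic $p$-space). But $\SO(\Lam)$ is non-uniform, and for $p\ge4$ any rank-$(p{+}1)$ sublattice $U\subset\Lam$ of signature $(p,1)$ is isotropic over $\Q$ by Meyer's theorem, so the arithmetic subgroup of $\SO(p,1)$ you obtain is again non-uniform and $C_2$ is non-compact. Your Borel--Serre workaround does not rescue the pairing: on a manifold with boundary the intersection product pairs $H_p(\overline{M_0})$ with $H_{pq-p}(\overline{M_0},\partial\overline{M_0})$, so one of the two cycles must represent an \emph{ordinary} homology class. ``Putting the boundaries in general position'' may make $C_1\cap C_2$ compact, but the resulting count is not a homological invariant of $[C_1]$ and $[C_2]$. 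The paper's key move is to replace the $\SO(p,1)$-cycle by a maximal $\Q$-split torus $G_1\simeq(G_m)^p$; the corresponding $X_1$ is a flat $\bb E^p$, and by Prasad--Raghunathan one can choose it so that its image $Y_1\subset Y$ is a \emph{compact} torus. The paper in fact singles out exactly your attempted pair in its closing remarks, observing that both $Y_1,Y_2$ are then noncompact and that ``$Y_1$ is unsuitable for showing that $Y_2$ gives a nontrivial homology cycle.''

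\textbf{The parity condition.} With the pair $(\SO(p,1),\SO(p,q-1))$ the Millson--Raghunathan sign calculation forces $p$ to be \emph{even}, not odd; the paper states this in the introduction and again in its remarks. Your reflection observation (that $\si_v$ acts by $-1$ on the rank-$p$ normal bundle) is correct but does not yield the coherence of local intersection signs: what matters is the determinant of $T_oX_1\xra{k}T_oX\xra{\pi}T_oX_1$ for the element $k\in K$ arising from $\ga=a_2ka_1^{-1}$, and this depends on the choice of $X_1$. The paper computes this determinant to be $(-1)^{p-1}$ precisely because $X_1$ is a flat rather than a copy of $\hy^p$; that is what makes $p$ odd the correct hypothesis here.

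For part~(b) the paper also proceeds differently: rather than separating lifts of a single pair via strong approximation, it constructs an explicit arrangement of $N$ flats $F_k$ and $N$ hyperplanes $H_\ell$ in $X$ (by iterating a fixed rotation) whose intersection matrix is lower-triangular with nonzero diagonal, perturbs to rational data with compact flats, and then passes to a deep enough congruence subgroup so that the lower-triangular pattern persists in $Y$.
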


A lattice $\Lam$ as in Theorem \ref{thm:cycle} is determined up to isomorphism by its signature $(p,q)$ and its parity (even or odd) \cite[Ch.\ II, \S4]{milnor-husemoller}. The group $\SO(\Lam)$ is a nonuniform lattice in $\SO(p,q)\cong\SO(\Lam\ot_\Z\R)$ and has $\Q$-rank $p$. (Note we are using ``lattice" in two different ways. This should not cause any confusion.) 

Constructing nonzero elements of $H^*(\Ga;\Q)$ is a classical important problem in the theory of arithmetic groups. Our primary interest is to use Theorem \ref{thm:cycle} to produce new characteristic classes for certain fiber bundles. Recall that a characteristic class for fiber bundles with structure group $G$ is an element of $H^*(BG)$, where $BG$ is the classifying space of $G$. Our first application is as follows.

\begin{cor}\label{cor:main}
Fix $g\ge2$ and fix $n$ even so that $2n\ge g+4$. Let $W_g^{2n}=\#_g(S^{n}\ti S^{n})$, and denote the group of orientation-preserving diffeomorphisms of $W_g$ by $\Diff(W_g)$. For every $m\ge1$, there is a finite-index subgroup $\Diff^\Ga(W_g)<\Diff(W_g)$ so that $\dim H^g(B\Diff^\Ga(W_g);\Q)\ge m$.
\end{cor}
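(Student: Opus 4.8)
The plan is to relate $B\Diff^\Ga(W_g)$ to the classifying space of the arithmetic group appearing in Theorem \ref{thm:cycle} by means of the action of $\Diff(W_g)$ on the middle cohomology $H^{2k}(W_g;\Z)$. This cohomology group is a free abelian group of rank $2g$, equipped with the intersection form, which (since $2k$ is even) is symmetric, unimodular, and of signature $(g,g)$; moreover $W_g$ is spin, so the form is even. Restricting to orientation-preserving diffeomorphisms gives a homomorphism $\Diff(W_g)\to \SO(\Lam)$, where $\Lam = H^{2k}(W_g;\Z)$ with its intersection form. The first step is to recall (from the work of Kreck on the mapping class groups of highly connected manifolds, applicable since $k\ge g/2$ guarantees we are in a range where $W_g$ is $(2k-1)$-connected of dimension $4k$) that this representation is \emph{surjective} onto $\SO(\Lam)$, or at least has image a finite-index subgroup; I would cite the relevant structure theorem for $\pi_0\Diff(W_g)$.

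Given surjectivity, I would take $N\ge 1$, apply Theorem \ref{thm:cycle} with $(p,q)=(g,g)$ — valid because $g\ge 3$ is odd, so $p=g$ is odd and $p+q=2g\ge 3$ — to obtain a finite-index subgroup $\Ga<\SO(\Lam)$ with $\dim H^g(\Ga;\Q)\ge N$. Pulling back along $\Diff(W_g)\to\SO(\Lam)$ defines $\Diff^\Ga(W_g)$ as the preimage of $\Ga$, a finite-index subgroup of $\Diff(W_g)$. The second step is to produce the cohomology in $H^g(B\Diff^\Ga(W_g);\Q)$ from $H^g(\Ga;\Q)$. The classifying space map $B\Diff^\Ga(W_g)\to B\Ga$ induces $H^g(\Ga;\Q)\to H^g(B\Diff^\Ga(W_g);\Q)$, so it suffices to show this map is injective. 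For this I would argue that the homotopy fiber of $B\Diff^\Ga(W_g)\to B\Ga$ has the rational homology of a space with no cohomology in degrees $1,\dots,g$, so that the edge homomorphism in the Serre spectral sequence is injective through degree $g$. Concretely, the fiber is $B\Diff^{\Ga,\delta}(W_g)$ for the kernel-type subgroup, but a cleaner route is to use that $B\Diff^\Ga(W_g)\to B\Ga$ factors through $B\Diff(W_g)\to B\,\pi_0\Diff(W_g)$ and then through the arithmetic quotient; the fiber of $B\Diff(W_g)\to B\,\pi_0\Diff(W_g)$ is $B\Diff_0(W_g)$, which is rationally highly connected in the relevant range by Galatius--Randal-Williams-type homological stability together with the computation of the stable rational cohomology of $B\Diff_0$, so its low-degree rational cohomology vanishes; the remaining fiber (from the Torelli group $\mathcal I_g\le\pi_0\Diff(W_g)$) is handled by Kreck's identification of the Torelli group as an extension of an arithmetic-type group by a finite or abelian piece, whose contribution in degrees $\le g$ can be controlled, or bypassed entirely by instead pulling back to the full $\pi_0\Diff(W_g)$ and noting that $\SO(\Lam)$-cohomology injects there.

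The main obstacle I anticipate is the injectivity of $H^g(\Ga;\Q)\to H^g(B\Diff^\Ga(W_g);\Q)$: one must verify that the classes produced by the Millson--Raghunathan construction — which are geometric cycles dual to totally geodesic submanifolds of the locally symmetric space, living exactly in degree $p=g$ — are not killed when passing from the arithmetic group to the diffeomorphism group, and in particular survive the contributions of the Torelli subgroup and of $B\Diff_0(W_g)$. The cleanest way to sidestep this is to observe that any section-like splitting, or a transfer argument using that $\Ga$ has finite index, reduces injectivity in degree $g$ to the vanishing of $H^i$ of the homotopy fiber for $0<i\le g$; marshalling the precise connectivity estimates (which is why the hypothesis $k\ge g/2$ enters, ensuring $W_g$ is in the stable range and $B\Diff_0(W_g)$ is rationally $(g-1)$-connected, or close enough) is the technical heart of the argument. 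Once that is in place, $\dim H^g(B\Diff^\Ga(W_g);\Q)\ge \dim H^g(\Ga;\Q)\ge N$, completing the proof.
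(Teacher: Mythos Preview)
Your overall architecture matches the paper's: apply Theorem~\ref{thm:cycle} with $(p,q)=(g,g)$ to produce classes in $H^g(\Ga;\Q)$, then show that $\alpha^*:H^g(B\Ga;\Q)\to H^g(B\Diff^\Ga(W_g);\Q)$ is injective. The paper states this explicitly: Corollary~\ref{cor:main} follows immediately from Theorem~\ref{thm:cycle} and Theorem~\ref{appendix:theorem}. So the reduction is right, and you correctly identify that the injectivity of $\alpha^*$ in degree $g$ is the entire content beyond Theorem~\ref{thm:cycle}, and that the hypothesis $k\ge g/2$ (equivalently $g\le 2k=n$) is exactly the range needed for that injectivity.

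The gap is in your proposed mechanism for injectivity. Galatius--Randal-Williams stability concerns the behaviour of $H_*(B\Diff_\partial(W_{g,1}))$ as $g$ varies; it does not assert that $B\Diff_0(W_g)$ is rationally highly connected, and in fact no such direct statement is available. Likewise, Kreck's description of the Torelli group does not by itself control $H^*(\mathcal I_g;\Q)$ up to degree $g$. The paper's route to injectivity (Theorem~\ref{appendix:theorem}, in the appendix) is quite different from your Serre spectral sequence sketch: it factors $B\Diff_\partial(W_{g,1})\to B\Ga$ through block diffeomorphisms and homotopy automorphisms,
\[
B\Diff_\partial(W_{g,1})\to B\widetilde{\Diff}_\partial(W_{g,1})\to B\hAut^{\cong}_\partial(W_{g,1})\to B\hAut_\partial(W_{g,1})\to BG_g,
\]
and handles each step separately. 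The first uses Morlet's lemma of disjunction (via Randal-Williams' observation that $\widetilde{\Diff}_\partial(D^{2n})/\Diff_\partial(D^{2n})$ is $(2n-5)$-connected); the second uses surgery theory and a normal-derivative splitting (Berglund--Madsen); the third is a finite-kernel argument; and the last uses Berglund--Madsen's computation that $\pi_{*+1}(B\hAut^{\id}_\partial(W_{g,1}))\otimes\Q$ vanishes in degrees $\le n-2$, so the fibre of $B\hAut_\partial(W_{g,1})\to BG_g$ is rationally $(n-1)$-connected. This last step is morally the connectivity statement you were reaching for, but it lives at the level of homotopy automorphisms, not $\Diff_0$, and getting back to $\Diff$ requires the preceding surgery-theoretic steps. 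A minor point: orientation-preserving diffeomorphisms of $W_g^{4k}$ need not act with determinant $+1$ on $H_{2k}$, so the target is $\oO_{g,g}(\Z)$ rather than $\SO(\Lam)$; this does not affect the argument since $\Ga<\SO(\Lam)$ is still finite index in the image.
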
 

Before discussing further applications, we make several remarks. 

The cohomology produced in Theorem \ref{thm:cycle} is new. Millson--Raghunathan \cite{millson-raghunathan} produce \emph{uniform} $\Ga<\SO(p,q)$ with $1\le p\le q$ and $p$ even such that $H^p(\Ga;\Q)\neq0$. Note in particular that \cite[pg.\ 103]{millson-raghunathan} requires $p$ to be even, whereas in Theorem \ref{thm:cycle} works for any $p$. This ``improvement" is obtained using some ideas from more recent work of Avramidi--Nguyen-Phan \cite{avramidi-phan}.

The particular finite-index subgroup $\Ga<\SO(\Lam)$ in Theorem \ref{thm:cycle} is somewhat difficult to make precise by the nature of the construction. The subgroup $\Diff^\Ga(W_g)<\Diff(W_g)$ appearing in Corollary \ref{cor:main} is defined as the preimage of a subgroup $\Ga<\SO(\Lam)$ under a homomorphism $\Diff(W_g)\ra\SO(\Lam)$, where $\Lam$ is $H_n(W_g^{2n})$ with its intersection form.

When $p=q$, then if $\Ga<\SO(\Lam)$ is finite index and $i\le p-2$, then $H^i(\Ga;\Q)\cong H^i(\SO(\Lam);\Q)$. This follows from Borel's stability theorem (see \cite{borel_cohoarith}, and see \cite{tshishiku-borel-range} for the stated range). Moreover, in this range, the cohomology ring $H^*(\SO(\Lam);\Q)$ is a polynomial ring with one generator in each degree $4k>0$. Theorem \ref{thm:cycle} shows that the stable range given in Borel's theorem is nearly sharp in this case. A similar observation for uniform lattices in $\SO(p,q)$ is mentioned in \cite{millson-raghunathan}.

Another way to state Theorem \ref{thm:cycle} is that the $p$-th \emph{virtual Betti number} of $\SO(\Lam)$ is infinite. If $\SO(\Lam)<\SO(p,q)$ was uniform, then to prove Theorem \ref{thm:cycle} it would suffice to prove that there exists $\Ga<\SO(\Lam)$ such that $H^p(\Ga;\Q)\neq0$. Then since $\SO(\Lam)$ has a large commensurator one can produce many linearly independent classes in further finite-index subgroups by an argument that appears in \cite{venkataramana-betti}. For a non-uniform lattice, the same argument works, but only up to the range in Borel's stability theorem. In our case, that range is less than $p$, so that argument cannot be used toward proving Theorem \ref{thm:cycle}. Our approach to showing $\dim H^p(\Ga;\Q)$ can be made large is along the lines of Avramidi--Nguyen-Phan \cite[Thm.\ 1.2]{avramidi-phan}, but the argument is different.

For $\Ga<\SO(\Lambda)$, a class $c\in H^*(\Ga;\Q)\cong H^*(B\Ga;\Q)$ can be viewed as a characteristic class for vector bundles $W\ra B$ with structure group $\Ga<\gl_{p+q}(\R)$. We explain what the characteristic classes produced in Theorem \ref{thm:cycle} measure using obstruction theory in \S\ref{sec:interpret}. This gives a new perspective on the Millson--Raghunathan construction. It also provides an interpretation for the classes in Corollary \ref{cor:main}. This becomes relevant in our application to bundles with fiber a K3 surface; see \S\ref{sec:K3}. 

The cohomology produced in Corollary \ref{cor:main} is new. The previously known classes in $H^*(B\Diff^\Ga(W_g);\Q)$ are the \emph{stable classes} (also known as \emph{tautological} or \emph{generalized Miller--Morita--Mumford} classes). For $B\Diff(W_g)$, the stable classes account for all of the cohomology in low degree $*\ll g$ \cite{GRW-homological-stability-1,GRW-homological-stability-2,GRW-stable-moduli}. When $g$ is odd, the classes we produce live in odd degree, whereas the stable classes all have even degree, so our classes are not in the algebra generated by the stable classes. A similar statement can be made for $g$ even. The nontriviality of these classes is detected by a nontrivial bundles, so Corollary \ref{cor:main} gives a new way to produce topologically nontrivial bundles $W_g\ra E\ra B^g$. 

Corollary \ref{cor:main} illustrates that the \emph{unstable} cohomology of arithmetic groups is a source of cohomology of $B\Diff(W_g)$. This phenomenon is largely unexplored; see also Corollary \ref{appendix:corollary:unstableclasses} in the appendix.

\vspace{.1in}
{\bf About the proof of Theorem \ref{thm:cycle}.} The cohomology classes in Theorem \ref{thm:cycle} are produced using geometric cycles in locally symmetric spaces. Let $X=\SO(p,q)/K$ be the symmetric space associated to $\SO(p,q)$, and let $Y=\Ga\bs X$ be the locally symmetric space for $\Ga<\SO(\Lam)$. There is an isomorphism $H^*(\Ga;\Q)\cong H^*(Y;\Q)$. Each class $c\in H^p(\Ga;\Q)$ we produce is Poincar\'e dual to a cycle $[Z]\in H_{pq-p}^{\text{cl}}(Y;\Z)$ in Borel--Moore homology with closed supports, and $[Z]$ is represented by a totally-geodesic, properly-embedded oriented submanifold $Z\sbs Y$. To show $[Z]\neq0$, we find a compact, totally-geodesic oriented submanifold $Z'\sbs Y$ of dimension $p$ so that the intersection number $[Z]\cdot[Z']\in H_0(Y;\Z)\cong\Z$ is nonzero. 

The cycles $[Z]$ and $[Z']$ are often called \emph{geometric cycles}. The idea of finding nontrivial homology of a locally symmetric space/arithmetic group by finding a pair of geometric cycles with $[Z]\cdot[Z']\neq0$ goes back to Millson \cite{millson}; see also \cite{millson-raghunathan, rohlfs-schwermer, lee-schwermer,avramidi-phan}. In each of these works, the locally symmetric space $Y=\Ga\bs X$ is either compact, or the lattice $\Ga$ is commensurable to $\Sl_n(\Z)$. The spaces $\Ga\bs\SO(p,q)/K$ we are interested in do not fall into either of these categories. Theorem \ref{thm:cycle} extends the known results to this case. 

In our argument $Z'\sbs Y$ is the quotient of a maximal periodic flat in $X$. Theorem \ref{thm:cycle} gives a partial answer to a question of Avramidi--Nguyen-Phan \cite[\S9]{avramidi-phan}. 

\vspace{.1in}
{\bf Characteristic class interpretation.} For the application to manifold bundles, the element $[Z]\in H_{pq-p}^{\text{cl}}(Y;\Q)\cong H^p(B\Ga;\Q)$ described above is not of any particular use as an abstract cohomology class. For this reason, one wants a bundle-theoretic construction of $[Z]$ as a characteristic class. 

Fix a lattice $\Lam\sbs\R^{p+q}$ as in Theorem \ref{thm:cycle}.  For a CW complex $B$, a map $B\ra B\SO(\Lam)$ defines a vector bundle $\R^{p+q}\ra W\ra B$ with a fiberwise lattice $\bsym\Lam$ and a fiberwise bilinear form $\bsym\be$ of signature $(p,q)$. We extend the structure group to $\SO(p,q)>\SO(\Lam)$ (this amounts to forgetting $\bsym\Lam$ but remembering $\bsym\be$), and then consider the different ways to reduce the structure group from $\SO(p,q)$ to its maximal compact subgroup. Each choice of reduction corresponds to a choice of a rank-$p$ subbundle $U\subset W$ on which $\bsym\be$ is positive definite. From this setup, we build a characteristic class $c$ that measures the difficulty of choosing $U\sbs W$ in a way that is ``compatible with $\bsym\Lam$." We make this precise in \S\ref{sec:interpret} using classical  obstruction theory, and we show that $c\in H^*(B\Ga;\Q)$ is dual to a geometric cycle $[Z]$.

\vspace{.1in}
{\bf Applications to manifold bundles.} Let $M^{4k}$ be a manifold, and let $\Lam_M$ denote the lattice $H_{2k}(M;\Z)/\text{torsion}$ with its intersection form. Given an $M$-bundle $\pi: E\ra B$, one can build a vector bundle $W\ra B$ by replacing each fiber $M_b:=\pi^{-1}(b)$ with its homology $H_{2k}(M_b;\R)$. On the level of classifying spaces, this corresponds to the map
\[\alpha:B\Diff(M)\ra B\oO(\Lam_M)\] 
induced by the action $\alpha: \Diff(M)\ra\oO(\Lam_M)$ of the group of oriention-preserving diffeomorphisms $\Diff(M)$ on $\Lam_M$ by automorphisms with determinant $\pm1$. For $\Ga<\SO(\Lam_M)$, we define $\Diff^\Ga(M)=\alpha^{-1}(\Ga)$. If $\Ga<\SO(\Lam_M)$ is finite index, then $\Diff^\Ga(M)<\Diff(M)$ is also finite index. In this case, note that any $M$ bundle $E\ra B$ has structure group reducing to $B\Diff^\Ga(M)$ after passing to a finite cover of $B$. 

To apply Theorem \ref{thm:cycle} to manifold bundles, we are interested in the homomorphism
\begin{equation}\label{eqn:coho}\al^*:H^*(B\Ga;\Q)\ra H^*\big(B\Diff^\Ga(M);\Q\big).\end{equation}

{\it Application to $W_g^{4k}=\#_g(S^{2k}\ti S^{2k})$.} When $M=W_g^{4k}$, information about $\alpha^*$ can be obtained using work of Berglund--Madsen \cite{berglund-madsen-rational}. This is explained in the appendix, written by Manuel Krannich, which studies $\alpha^*$ for the more general class of manifolds $W_g^{2n}=\#_g(S^n\times S^n)$ with $n\ge3$.  It is shown that $\alpha^*$ is injective in a range of degrees growing with $n$; see Theorem \ref{appendix:theorem}. Corollary \ref{cor:main} follows immediately from Theorems \ref{thm:cycle} and \ref{appendix:theorem}. As a further consequence of Theorem \ref{appendix:theorem}, the appendix produces unstable classes in the rational cohomology of $B\Diff(W_g^{2n})$ for $n\ge4$; see Corollary \ref{appendix:corollary:unstableclasses}.

We remark that the homomorphism $H^*(B\oO(\Lam_M);\Q)\ra H^*(B\Diff(M);\Q)$ can be completely understood in the stable range using index theory. Morita \cite{morita-ccs} showed this for $M$ a surface; see also \cite{EbertRW}. The techniques used to study $\alpha^*$ outside the stable range rely on surgery theory, Morlet's lemma of disjunction, and rational homotopy theory; see the appendix. 

\vspace{.1in}
{\it Application to K3 surfaces.} Let $M^4$ be a manifold diffeomorphic to a K3 surface. In this case $\SO(\Lam_M)$ is a lattice in $\SO(3,19)$, and by Theorem \ref{thm:cycle}, we can find finite-index $\Ga<\SO(\Lam_M)$ and a nonzero cycle $z\in H_3(B\Ga;\Q)$. Using the global Torelli theorem, we conclude that $z$ is in the image of $H_3(B\pi_0\Diff^\Ga(M);\Q)\ra H_3(B\Ga;\Q)$. We are not able to determine if $z$ is in the image of $\al_*:H_*(B\Diff^\Ga(M);\Q)\ra H_*(B\Ga;\Q)$, but we relate this problem to another problem of interest. Specifically, we give an example $z\neq0\in H_3(B\Ga;\Q)$ so that if $z$ is in the image $\al_*$, then there exists a K3-surface bundle over a 3-manifold that does not admit a fiberwise Einstein metric. This should be contrasted with a theorem of Donaldson \cite[Cor.\ 6.3]{donaldson} that says that every K3 bundle over $S^1$ admits a fiberwise Einstein metric; when the base has dimension at least 2, the corresponding statement seems to be unknown. 

\vspace{.1in}
{\it Odd-dimensional manifolds.} With the methods of this paper, we can also produce characteristic classes for $M$ bundles when $\dim M$ is odd. A sample application to $M=\#_3(S^d\ti S^{d+1})$ is discussed in \S\ref{sec:app}.

\vspace{.1in} 
{\bf Section outline.} In \S\ref{sec:geocycle} we recall the general method of constructing homology of arithmetic groups using geometric cycles. In \S\ref{sec:thm1} we apply that method to $\SO(\Lam)$ and prove Theorem \ref{thm:cycle}. In \S\ref{sec:interpret} we explain how to view geometric cycles as characteristic classes. Finally, \S\ref{sec:app} and the Appendix contain the applications to manifold bundles.  

\vspace{.1in} 
{\bf Acknowledgements.} BT would like to thank S.\ Kupers for suggesting Corollary \ref{cor:main}. He thanks M.\ Grey, P.\ Kronheimer, and T.\ Nguyen-Phan for useful conversations and G.\ Avramidi and J.\ Schwermer for helpful email correspondence. Thanks to B.\ Farb for comments on a draft and for his continued support. He is grateful to O.\ Randal-Williams for pointing out a pair of errors in a previous draft of this paper. Finally, thanks to M.\ Krannich for writing the appendix, which corrects and improves an argument that appeared in an earlier draft of this paper. 

MK would like to thank B.\ Tshishiku for offering him to write the appendix and O.\ Randal-Williams for comments on a draft of it. He was supported by the European Research Council (ERC) under the European Union’s Horizon 2020 research and innovation programme (grant agreement No. 756444).

Finally, BT thanks the referee who pointed out a mistake in the proof of Theorem 1. More generally, thanks to the referees for carefully reading the paper and offering many comments and corrections, which have been invaluable.

\section{Homology of arithmetic groups: geometric cycles}\label{sec:geocycle}

This section provides the setup for proving Theorem \ref{thm:cycle}. We summarize the general strategy to produce geometric cycles in the homology of arithmetic groups/locally symmetric manifolds. We follow \cite{millson-raghunathan} and \cite{schwermer-survey} and refer the reader to these sources for further details. In \S\ref{sec:thm1} we will apply the material of this section to the specific case of interest $\Ga<\SO(\Lam)$.

\subsection{Geometric cycles: the general strategy.}\label{sec:geometric-cycles} 

Fix an algebraic $\Q$-group $G$ such that $G(\R)$ is a semisimple Lie group without compact factors. We are interested in finding some nontrivial homology of a finite-index subgroup $\Ga<G(\Z)$. The exact subgroup $\Ga$ will be (unfortunately) out of our control, and at several points we will replace $\Ga$ with a further finite-index subgroup (without changing the notation) to ensure that some geometric fact is true. 

We begin by describing the locally symmetric model for $B\Ga$. Choose a maximal compact subgroup $K<G(\R)$, and define $X=G(\R)/K$. The manifold $X$ is contractible and admits a $G(\R)$-invariant Riemannian metric of nonpositive curvature. Since $\Ga<G(\R)$ is discrete, it acts properly discontinuously on $X$ and each point-stabilizer in $\Ga$ is finite. We can replace $\Ga$ by a torsion-free, finite-index subgroup \cite[Thm.\ 4.8.2]{morris}, so then $\Ga$ acts freely on $X$. Then $Y=\Ga\bs X$ is a model for $B\Ga$. The manifold $Y$ may be noncompact, but it has finite volume (because arithmetic subgroups are lattices \cite[Thm.\ 1.3.9]{morris}). 

The manifold $Y$ has an abundance of totally geodesic immersed submanifolds. Let $G_1<G$ be a subgroup and take $h\in G(\R)$ so that $K_1:=G_1(\R)\cap (hKh^{-1})$ is a maximal compact subgroup of $G_1(\R)$. The image of the orbit map $G_1(\R)\ni g\mapsto ghK\in X$ is totally geodesic submanifold $G_1(\R)/K_1\cong X_1\sbs X$. If $G_1$ is a $\Q$-subgroup, then $\Ga_1=G_1\cap\Ga$ has finite index in $G_1(\Z)$, and the natural map $j_1:Y_1=\Ga_1\bs X_1\ra Y$ is a proper, totally geodesic immersion \cite[\S6]{schwermer-survey}. 

With this setup, we are ready to discuss the general strategy for producing ``geometric cycles" in the homology of $Y$. Let $o=eK$ be the basepoint of $X$. First choose $G_1,G_2<G$ so that 
\begin{enumerate}
\item[($\dagger$)] $X_1$ and $X_2$ have complementary dimension $d_1+d_2=\dim X$, the intersection $X_1\cap X_2=\{o\}$ is transverse, and $Y_1$ is compact.
\end{enumerate}
Then choose $\Ga<G(\Z)$ so that 
\begin{enumerate}
\item[($\ddagger$)] the quotients $Y,Y_1,Y_2$ are oriented manifolds, the maps $j_1,j_2$ are embeddings, and the intersection $Y_1\cap Y_2$ is transverse and every intersection has positive sign. 
\end{enumerate} 

Given ($\dagger$) and ($\ddagger$), the submanifolds $Y_i\sbs Y$ determine classes $[Y_1]\in H_{d_1}(Y;\Q)$ and $[Y_2]\in H^{\text{cl}}_{d_2}(Y;\Q)$ in homology and homology with closed supports, and the algebraic intersection $[Y_1]\cdot[Y_2]$ is nonzero, so $[Y_1]\neq0$ in $H_{d_1}(Y;\Q)\cong H_{d_1}(\Ga;\Q)$, and $[Y_2]\neq0$ in $H_{d_2}^{\text{cl}}(Y;\Q)\cong H^{d_1}(Y;\Q)\cong H^{d_1}(\Ga;\Q)$. This is explained in more detail in \cite{schwermer-survey}. 

The general strategy does not always work. Indeed, it is not always possible to achieve ($\dagger$). One problem is that a totally geodesic subspace $X_1\sbs X$ need not admit a subspace of complementary dimension. (It is shown in \cite[Theorem 1.1]{millson-raghunathan} that such a complement exists if $X_1=X^\si$ is the fixed set of an involutive isometry.) Another problem is that if $Y$ is non-compact, then there is no reason $Y_1$ or $Y_2$ need be compact in general. Nevertheless, in some special cases, one can find $G_1,G_2$ so that ($\dagger$) is satisfied. We will see this when $G$ is an indefinite orthogonal group in the next section. 

Now we address the difficulty with ($\ddagger$). There is a general theorem that ensures the first two clauses of ($\ddagger$). (Ensuring that $Y$ is oriented is easy, but ensuring $Y_1$ and $Y_2$ are oriented is already nontrivial.)  

\begin{thm}[\cite{schwermer-survey}, Theorem D]\label{thm:schwermer}
Let $G$ be a connected semisimple algebraic $\Q$-group, let $G_1<G$ be a connected reductive $\Q$-subgroup, and let $\Ga<G(\Q)$ be an arithmetic subgroup. Then after replacing $\Ga$ by a finite-index subgroup, the map $j_1:Y_1\ra Y$ is a proper, injective, closed embedding, and each component of the image is an orientable, totally geodesic submanifold of $Y$. 
\end{thm}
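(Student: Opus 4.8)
The plan is to establish Theorem~\ref{thm:schwermer} in three stages: first the properness and closedness of the immersion $j_1$, then the injectivity on a suitable finite-index subgroup, and finally the orientability of the image. Throughout, $\Ga$ is replaced by finite-index subgroups finitely many times; since any finite intersection of finite-index subgroups is finite-index, this is harmless.

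\textbf{Properness and closed image.} Since $G_1<G$ is a reductive $\Q$-subgroup and $\Ga_1 = G_1\cap\Ga$ is an arithmetic subgroup of $G_1$, the orbit map $G_1(\R)\to X$, $g\mapsto ghK$, descends to a totally geodesic immersion $Y_1 = \Ga_1\backslash X_1\to Y$. The key input is a theorem on the behavior of such maps: if $H<G$ is a $\Q$-subgroup, the natural map $\Ga_H\backslash H(\R)\to\Ga\backslash G(\R)$ is proper (this is standard; it follows from the fact that $H(\R)$ is a closed subset of $G(\R)$ whose intersection with each $\Ga$-orbit is contained in a $\Ga_H$-orbit together with $H(\Z)$-cosets, the relevant reference being the reduction theory in \cite{morris} and the treatment in \cite[\S6]{schwermer-survey}). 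Passing to the quotient by $K$ and $K_1$ preserves properness since $K$ is compact. A proper immersion of manifolds has closed image, so the image of $j_1$ is a closed, totally geodesic (possibly immersed, possibly self-intersecting) submanifold of $Y$.

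\textbf{Injectivity after passing to a finite-index subgroup.} This is the step I expect to be the main obstacle, and it is the reason the statement allows replacing $\Ga$. The image $j_1(Y_1)$ fails to be embedded precisely when there exist $g_1, g_2\in G_1(\R)$ with $g_1 K = g_2 K$ in $Y$ but $g_1 K_1 \ne g_2 K_1$ in $Y_1$, i.e.\ when some $\ga\in\Ga$ satisfies $\ga g_1 K_1' = g_2 K_1'$ for the relevant conjugate but $\ga\notin\Ga_1$; equivalently, when the normalizer-type set $\{\ga\in\Ga : \ga X_1 = X_1\}$ strictly contains $\Ga_1$, or when distinct $\Ga$-translates of $X_1$ that are not $\Ga_1$-translates nevertheless coincide. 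One handles this by a standard congruence-subgroup argument: the set of ``bad'' elements $\ga\in\Ga$ producing a self-intersection or a non-$\Ga_1$ coincidence is controlled by finitely many double cosets (by properness and discreteness, only finitely many $\Ga$-translates of $X_1$ meet a given compact piece, and global finiteness of volume bounds the relevant combinatorics), and for each such bad $\ga$ one can choose a congruence subgroup $\Ga(\ell^n)$ avoiding it. Intersecting over the finitely many obstructions yields a finite-index $\Ga'<\Ga$ for which $j_1$ is injective; since $j_1$ was already a proper immersion, on $\Ga'$ it becomes a proper injective immersion, hence a closed embedding. The cited source \cite[Theorem D]{schwermer-survey} carries out exactly this argument in detail, so I would invoke it and only sketch the mechanism.

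\textbf{Orientability.} Each component of $j_1(Y_1)$ is of the form $\Ga_1'\backslash X_1$ for the new $\Ga_1' = G_1\cap\Ga'$, where $X_1\simeq G_1(\R)/K_1$ is simply connected (it is a symmetric space of noncompact type, hence contractible). Orientability of $\Ga_1'\backslash X_1$ is equivalent to the action of $\Ga_1'$ on $X_1$ preserving orientation, i.e.\ to the composite $\Ga_1'\to\mathrm{Isom}(X_1)\to\{\pm1\}$ being trivial. This composite is a homomorphism to a finite group, so its kernel is finite-index in $\Ga_1'$; replacing $\Ga'$ by the preimage of that kernel (again a finite-index subgroup of the original $\Ga$) makes every component of $j_1(Y_1)$ orientable. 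The same device applied to $Y$ itself, and if desired to $Y_2$, secures the remaining orientability assertions. Assembling the three steps—properness from reduction theory, injectivity from a congruence-subgroup argument, orientability from killing a finite quotient—gives the theorem.
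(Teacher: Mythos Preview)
The paper does not give its own proof of this theorem. It is stated with the attribution ``[\cite{schwermer-survey}, Theorem D]'' and used as a black box; the surrounding text even flags that ``ensuring $Y_1$ and $Y_2$ are oriented is already nontrivial'' before quoting the result. So there is nothing in the paper to compare your sketch against, and your second step already acknowledges this by deferring to the cited source.

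That said, one genuine gap in your outline is worth naming. In the orientability step you pass from the kernel of the orientation character $\Ga_1'\to\{\pm1\}$, which is a finite-index subgroup of $\Ga_1'$, to ``the preimage of that kernel'' in $\Ga'$. But there is no map $\Ga'\to\Ga_1'$ whose preimage you could take; what is actually needed is a finite-index subgroup $\Ga''<\Ga'$ with $\Ga''\cap G_1(\R)$ contained in that kernel. Producing such a $\Ga''$ is exactly the ``nontrivial'' point the paper alludes to: one must show that the orientation obstruction on $\Ga_1'$ is detected by a congruence condition that makes sense on all of $\Ga$, not merely on $\Ga_1'$. Similarly, your injectivity sketch invokes a finiteness of bad double cosets that is not immediate when $Y_1$ is noncompact; the actual argument (in Schwermer, going back to Rohlfs and Raghunathan) controls the set $\{\ga\in\Ga:\ga X_1\cap X_1\neq\varnothing\}$ via congruence conditions rather than a direct compactness count.
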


Given Theorem \ref{thm:schwermer}, the remaining difficulty is showing that, after replacing $\Ga$ with a subgroup of large index, $Y_1\cap Y_2$ is a finite set of points and the intersection number at each point is $+1$. We explain how to approach this problem in the next subsection. 

\subsection{Intersections and double cosets.}\label{sec:intersect} Assume that $G_1,G_2<G$ satisfy ($\dagger$) and that $\Ga<G(\Z)$ is torsion-free and the associated manifolds $Y_1$ and $Y_2$ are oriented, embedded submanifolds of $Y$. As explained in \cite{millson-raghunathan} (to be reviewed below), the components of $Y_1\cap Y_2$ can be identified with a certain subset $\Om$ of the double coset space $\Ga_2\bs\Ga/\Ga_1$. Choosing coset representatives $I(\Ga)\sbs\Ga$ for $\Om$, the sign of the intersection corresponding to $\ga\in I(\Ga)$ is determined by the double coset of $\ga$ in $G_2^+(\R)\bs G(\R)/G_1^+(\R)$, where $G_i^+(\R)<G_i(\R)$ is the subgroup that preserves orientation on $X_i$ and on $X$ (these conditions are not automatic if $G_i(\R)$ and $G(\R)$ are not connected). In particular, if $\ga$ can be written $\ga=g_2g_1$ with $g_i\in G_i^+(\R)$, then the corresponding intersection is positive \cite[Cor.\ to Lem. 2.5]{millson-raghunathan}. One wants to show that if $\Ga<G(\Z)$ is a subgroup of large index, then every $\ga\in I(\Ga)$ belongs to one of the double cosets in $G_2^+(\R)\bs G(\R)/G_1^+(\R)$ that correspond to a positive intersection number. 

{\bf Components of $Y_1\cap Y_2$.} First we describe the set $\pi_0(Y_1\cap Y_2)$ in terms of the double coset space $\Ga_2\bs\Ga/\Ga_1$. Denote the projection $\Pi:X\ra Y$. There is a bijection between $\Pi^{-1}(Y_1\cap Y_2)$ and 
\[T:=\{(\ga,x_1,x_2): \ga x_1=x_2\}\sbs \Ga\ti X_1\ti X_2\] (an intersection downstairs is covered by an intersection upstairs, and we can translate by $\Ga$ so that the intersection happens on $X_2$). The set $T$ has an action of $\Ga_2\ti\Ga_1$ given by 
\[(\al_2,\al_1).(\ga,x_1,x_2)=(\al_2\ga\al_1^{-1},\al_1x_1,\al_2x_2).\]

{\it Claim.} $Y_1\cap Y_2\cong T/(\Ga_2\ti\Ga_1)$. 

To prove the claim, one shows that if $(\ga,x_1,x_2)$ and $(\ga',x_1',x_2')$ are in $T$, then $\Pi(x_1)=\Pi(x_1')$ if and only if there exists $\al_i\in\Ga_i$ so that $(\ga',x_1',x_2')=(\al_2,\al_1).(\ga,x_1,x_2)$. The ``if" direction is obvious. For the ``only if" direction, one uses the fact that $Y_i$ is embedded in $Y$, which implies that if $\ga\in\Ga$ and $\ga X_i\cap X_i\neq\vn$, then $\ga\in \Ga_i$. More details can be found in \cite[\S2]{millson-raghunathan}. 

A similar argument shows that if $(\ga,x_1,x_2)$ and $(\ga',x_1',x_2')$ are in $T$, then $\Pi(x_1)$ and $\Pi(x_1')$ are in the same component of $Y_1\cap Y_2$ if and only if $\ga$ and $\ga'$ lie in the same double coset $\Ga_2\bs\Ga/\Ga_1$. See \cite{millson-raghunathan} Lemma 2.3 and Proposition 2.3. In other words, $\pi_0(Y_1\cap Y_2)$ is in bijection with  
\[\Om:=\{\Ga_2\ga\Ga_1: \ga X_1\cap X_2\neq\vn \}\sbs  \Ga_2\bs\Ga/\Ga_1.\]
Note that $\Om\cong\pi_0(Y_1\cap Y_2)$ is finite because $Y_1\cap Y_2\sbs Y_1$ is a submanifold and $Y_1$ is compact. 

{\bf Sign of the intersection.} Next we explain, for each $y\in \pi_0(Y_1\cap Y_2)$, whether the intersection is positive, negative, or degenerate. Fix a set of coset representatives $I(\Ga)\sbs\Ga$ for elements of $\Om$, and assume that $\ga_1=\id$ represents $\Ga_2\Ga_1$. 

Note that for $\ga\in I(\Ga)$ we can write $\ga=a_2 k a_1^{-1}$, where $a_i\in G_i^+(\R)$ and $k\in K$. This is because $\ga X_1\cap X_2\neq\vn$ implies that there exists $x_i\in X_i$ so that $\ga x_1=x_2$. Since $G_i^+(\R)$ acts transitively on $X_i$, we can choose $a_i\in G_i^+(\R)$ so that $a_i (o)=x_i$, where $o=eK$ is the basepoint of $X=G(\R)/K$ (and is also the intersection of $X_1$ and $X_2$).  Then $a_2^{-1}\ga a_1(o)=o$, which means $a_2^{-1}\ga a_1=k$ for some $k\in K$. 

Since $a_i$ preserves orientation on $X_i$ and $X$, the sign of the intersection $\ga X_1\cap X_2$ is determined by the action of $k$ on $T_oX$: 

\begin{lem}\label{lem:sign} 
Let $e_1,\ld,e_p\in T_oX_1$ and $e_{p+1},\ld,e_n\in T_oX_2$ be positively oriented bases. Define $\ep(\ga)$ by 
\begin{equation}\label{eqn:ep}k(e_1)\we \cd\we k(e_p)\we e_{p+1}\we\cd\we e_{n}=\ep(\ga)\>e_1\we\cd\we e_{n}.\end{equation}
\begin{enumerate}
\item[(a)] The intersection corresponding to $\ga\in I(\Ga)$ is positive, negative, or degenerate according to whether $\ep(\ga)$ is positive, negative, or zero.
\item[(b)] If $\ga,\ga'\in I(\Ga)$ lie in the same coset $G_2^+(\R)\bs G(\R)/ G_1^+(\R)$, then $\ep(\ga)=\ep(\ga')$. 
\end{enumerate} \end{lem}

\begin{proof}
We begin with (a). The basis $(a_1e_1,\ld,a_1e_p)$ is positively oriented in $T_{x_1}X_1$, and the basis $(a_2e_{p+1},\ld,a_2e_n)$ is positively oriented in $T_{x_2}X_2$. We want to know if 
\[\big(\ga(a_1e_1),\ld,\ga(a_1e_p),a_2e_{p+1},\ld,a_2e_n\big)\] is positively oriented in $T_{x_2}X$. Since $a_2$ preserves orientation on $X$ and $\ga=a_2ka_1^{-1}$, the orientation of this $n$-tuple is the same as the orientation of $(ke_1,\ld,ke_p,e_{p+1},\ld,e_n)$. See also \cite[Prop.\ 2.3]{millson-raghunathan}. 

For (b), assume $\ga,\ga'\in I(\Ga)$ and $\ga'=h_2\ga h_1$ for some $h_i\in G_2^+(\R)$. There are two cases: the intersection $\ga X_1\cap X_2$ is either degenerate or not. If $\ga X_1\cap X_2$ is degenerate (i.e.\ has dimension at least 1), then the same is true for $h_2\ga h_1=\ga'$, so $\ep(\ga)=0=\ep(\ga')$. If $\ga X_1\cap X_2$ and hence also $\ga'X_1\cap X_2$ are non-degenerate, then there exists a unique $x_1,x_1'\in X_1$ and $x_2,x_2'\in X_2$ so that $\ga x_1=x_2$ and $\ga'x_1'=x_2'$. Since $\ga'=h_2\ga h_1$, it follows that $x_1=h_1x_1'$ and $x_2=h_2^{-1}x_2'$. Then if $\ga=a_2ka_1^{-1}$ where $a_i(0)=x_i$, then we have $\ga'=h_2a_2ka_1^{-1}h_1$, and $h_1^{-1}a_1(0)=h_1^{-1}(x_1)=x_1'$ and $h_2a_2(0)=h_2(x_2)=x_2'$, which implies that both $\ep(\ga)$ and $\ep(\ga')$ is computed from the action of $k$ as in (\ref{eqn:ep}), so $\ep(\ga)=\ep(\ga')$. See also \cite[Lem.\ 2.5]{millson-raghunathan}. 
\end{proof}

We note that (b) implies that the sign of the intersection doesn't depend on the choice of representative $\ga\in I(\Ga)$ as long as $\Ga_i\sbs G_i^+(\R)$, which can be arranged by passing to a finite-index subgroup (c.f.\ Theorem \ref{thm:schwermer}). In this case we have the following formula 
\[[Y_1]\cdot[Y_2]=\sum_{\ga\in I(\Ga)}\ep(\ga).\]

Note in addition that (b) implies that if $\ga\in G_2^+(\R) G_1^+(\R)$, then $\ep(\ga)=+1$. We can see this latter fact directly as follows: if $\ga=g_2g_1\in G_2^+(\R)G_1^+(\R)$, then 
\begin{equation}\label{eqn:samesign}\ga X_1\cap X_2=(g_2g_1X_1)\cap X_2=(g_2X_1)\cap X_2=g_2(X_1\cap  g_2^{-1}X_2)=g_2(X_1\cap X_2),\end{equation} and since $g_2$ preserves orientation on $X$ and $X_2$, this implies that the sign of the intersection $\ga X_1\cap X_2$ is equal to the sign of the intersection $X_1\cap X_2$, which is positive by assumption.

{\bf Writing $\ga\in I(\Ga)$ in $G_2^+(\R)G_1^+(\R)$.} As mentioned above, one way to show an intersection $\ga X_1\cap X_2$ is positive is to show that $\ga\in G_2^+(\R)G_1^+(\R)$. The next two propositions are partial progress to writing $\ga$ in this form (which is not always possible in general, but our goal will be to show it can be ensured by passing to a finite-index subgroup a given $\Ga$). The following proposition follows from the argument of \cite[Theorem 3.1]{millson-raghunathan}; see also \cite[Lem.\ 2.6]{for}. 

\begin{prop}\label{prop:complexproduct}
Let $G$ be an connected, reductive algebraic $\Q$-group with $K<G(\R)$ a maximal compact subgroup. Fix an arithmetic subgroup $\Ga<G(\Z)$, and let $G_1,G_2<G$ be connected, reductive $\Q$-subgroups. There exists a finite-index subgroup $\Ga'<\Ga$ so that if $\ga\in \Ga'$ and $G_2(\R)\ga\cap KG_1(\R)\neq\vn$, then $\ga\in G_2(\C)G_1(\C)$. 
\end{prop}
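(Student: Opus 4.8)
The plan is to deduce the statement from a Zariski-density/algebraic-group argument, exploiting the fact that $KG_1(\R)$ and $G_2(\C)G_1(\C)$ differ by something ``small'' once one works modulo the right congruence data. First I would set up the geometry: the condition $G_2(\R)\ga\cap KG_1(\R)\neq\vn$ says exactly that $\ga$ can be written $g_2 k g_1$ with $g_i\in G_i(\R)$ and $k\in K$, which (as in the $\ep(\ga)$ discussion above) is the statement that $\ga X_1\cap X_2\neq\vn$ upstairs. So the real content is: an element of $\Ga'$ whose associated translate of $X_1$ meets $X_2$ must in fact lie in the complex product $G_2(\C)G_1(\C)$. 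I would introduce the variety $Z = \{(g_2,g_1)\in G_2\times G_1\} \to G$, $(g_2,g_1)\mapsto g_2 g_1$, whose image $\overline{G_2(\C)G_1(\C)}$ is a constructible subset of $G$; call its Zariski closure $W\subseteq G$, a $\Q$-subvariety. The goal becomes: for a suitable finite-index $\Ga'<\Ga$, every $\ga\in\Ga'$ satisfying the real incidence condition lies in $W(\C)\cap(\text{actual product})$ — and in fact in the product itself, not just its closure, which is where one must be slightly careful.

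The key steps, in order, would be: (1) Show that the set $S=\{\ga\in G(\R): G_2(\R)\ga\cap KG_1(\R)\neq\vn\}=G_2(\R)\cdot K\cdot G_1(\R)$ is contained in $W(\R)$ for the $\Q$-subvariety $W=\overline{G_2 G_1}^{\,\mathrm{Zar}}$; this is a density statement — $K$ is compact but contains a maximal compact torus whose complexification is a maximal torus, and more relevantly $K$ meets every component of $G(\R)$ and $\overline{K}^{\,\mathrm{Zar}}$ together with $G_2,G_1$ generates enough, but the cleanest route is to observe $G_2(\R)KG_1(\R)$ is the image of $X_2\times X$-type incidence data and its Zariski closure equals $\overline{G_2(\C)\cdot\overline{K}^{\mathrm{Zar}}\cdot G_1(\C)}$; then invoke that $\overline{K}^{\,\mathrm{Zar}}$ is a reductive $\Q$-subgroup $H$ with $H(\R)\supseteq K$, reducing to showing $H\subseteq G_2 G_1$ generates the complex product appropriately. (2) Reduce to the case $H=K^{\mathrm{Zar}}$ absorbed: since $X=G(\R)/K$ and $X_i=G_i(\R)/K_i$, the incidence $g X_1\cap X_2\neq\vn$ is equivalent to $g\in G_2(\R)K G_1(\R)$, and one checks $\overline{G_2(\R)KG_1(\R)}^{\mathrm{Zar}}=G_2(\C)G_1(\C)\cdot\overline{K}^{\mathrm{Zar}}$; but the subtlety \cite[Thm.\ 3.1]{millson-raghunathan} handles is precisely that $\overline{K}^{\mathrm{Zar}}$ need not be inside $G_2(\C)G_1(\C)$, so one does \emph{not} get $\ga\in G_2(\C)G_1(\C)$ for all such $\ga$ — one gets it only after passing to $\Ga'$. (3) The passage to $\Ga'$: let $Z_0=\overline{G_2 G_1}^{\mathrm{Zar}}$ and let $Z_1=G_2 G_1\cdot\overline{K}^{\mathrm{Zar}}$; one shows $Z_0\subsetneq Z_1$ is a proper subvariety unless equality already holds, and then takes $\Ga'=\Ga\cap(\text{congruence subgroup avoiding the extra components})$ — more precisely, following Millson–Raghunathan, one uses that $G_2 G_1$ is a union of finitely many $G_2(\C)\times G_1(\C)$-cosets worth of ``sheets'' inside $Z_1$, the bad sheets are cut out by $\Q$-equations, and a deep enough congruence subgroup $\Ga'$ meets only the good sheet (the identity component of the product) because the reduction mod $\ell^n$ of the bad locus is a proper subset. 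I would lean on \cite[Thm.\ 3.1]{millson-raghunathan} and \cite[Lem.\ 2.6]{for} to supply this last descent rather than reprove it.

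For a cleaner writeup I would actually just invoke those two references: state that the assertion is \cite[Thm.\ 3.1]{millson-raghunathan} verbatim (or its mild generalization in \cite{for}), and indicate the one-line translation — namely that $G_2(\R)\ga\cap KG_1(\R)\neq\vn$ is the hypothesis ``$\ga\in G_2(\R)KG_1(\R)$'' appearing there, and the conclusion ``$\ga\in G_2(\C)G_1(\C)$'' is literally theirs. The honest proof is not short, but since the excerpt explicitly says ``follows from the argument of \cite[Theorem 3.1]{millson-raghunathan}'', the expected writeup is a pointer plus the dictionary between the incidence condition $\ga X_1\cap X_2\neq\vn$ and the double-coset condition.

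\textbf{Main obstacle.} The hard part is Step (3): controlling why a \emph{congruence} subgroup $\Ga'$ (rather than an arbitrary finite-index one, and without enlarging $K$) forces membership in the \emph{complex} product $G_2(\C)G_1(\C)$ rather than merely in its Zariski closure or in the larger set $G_2 G_1 \overline{K}^{\mathrm{Zar}}$. This is exactly the technical heart of \cite[Thm.\ 3.1]{millson-raghunathan} — it requires knowing that the ``excess'' coming from $K$ is cut out by equations with $\Q$-coefficients and that reduction modulo a high power of a prime separates the good component from the bad ones — and I would not attempt to reprove it from scratch but cite it, verifying only that our $G$, $G_1=$ maximal $\Q$-split torus, $G_2=\SO(B')$ satisfy the hypotheses (connected, reductive, defined over $\Q$), which they do by the constructions in \S\ref{sec:dagger}.
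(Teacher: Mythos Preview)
Your conclusion is correct and matches the paper: the paper does not give a proof at all but simply cites \cite[Theorem 3.1]{millson-raghunathan} and \cite[Lem.\ 2.6]{for}, exactly as you recommend in your final paragraph. Your earlier sketch of the Millson--Raghunathan argument is somewhat muddled (the roles of $Z_0$, $Z_1$, and $\overline{K}^{\mathrm{Zar}}$ are not quite right as written), but since you correctly identify that the expected writeup is just the citation plus the translation $G_2(\R)\ga\cap KG_1(\R)\neq\vn \Leftrightarrow \ga\in G_2(\R)KG_1(\R)$, your proposal lands in the right place.
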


From the above, we see that if $\ga X_1\cap X_2\neq\vn$, then $G_2(\R)\ga\cap KG_1(\R)\neq\vn$. By Proposition \ref{prop:complexproduct}, after replacing $\Ga$ by a finite-index subgroup, we can ensure that if $\ga\in I(\Ga)$, then $\ga\in G_2(\C)G_1(\C)$. We strengthen this with the following proposition. 

\begin{prop}\label{prop:rationalproduct}Fix $G,G_1,G_2$ and $\Ga$ as above. Assume that $G_1\cap G_2=\{\id\}$. For $\ga\in\Ga$, if $\ga\in G_2(\C) G_1(\C)$, then $\ga\in G_2(\Q)G_1(\Q)$.
\end{prop}

\begin{rmk}\label{rmk:disjoint}
Note that if $X_1\cap X_2=\vn$ and $\ga=g_2g_1\in G_2(\Q)G_1(\Q)$, then also $\ga X_1\cap X_2=\vn$ by the computation in (\ref{eqn:samesign}). Thus, as a consequence of Proposition \ref{prop:rationalproduct} and the above discussion, if $X_1$ and $X_2$ are disjoint, then there exists finite index $\Ga<G(\Z)$ so that the quotient submanifolds $Y_1,Y_2$ are disjoint in $Y$. This observation will be used in \S\ref{sec:arrange} to produce linearly independent cycles. 
\end{rmk}

\begin{proof}[Proof of Proposition \ref{prop:rationalproduct}]
We are given $\ga=h_2h_1$ with $h_i\in G_i(\C)$. To show that $h_i\in G_i(\Q)$ we show that $h_i\si(h_i)=e$ for every $\si\in\Gal(\C/\Q)$. Given $\si\in\Gal(\C/\Q)$, since $\si(\ga)=\ga$, we conclude that $h_1\si(h_1)^{-1}=h_2^{-1}\si(h_2)$. This equality implies $h_1\si(h_1)^{-1}\in G_1(\C)\cap G_2(\C)$, which is trivial assumption. Hence $h_1=\si(h_1)$ and similarly $h_2=\si(h_2)$.
\end{proof}

In summary, we have shown that $\ga\in I(\Ga)$ can be expressed as $\ga=g_2g_1$ with $g_i\in G_i(\Q)$. 

{\bf Eliminating intersections.} Assume $\Ga$ satisfies the hypothesis of Proposition \ref{prop:complexproduct} and hence the conclusion of Proposition \ref{prop:rationalproduct}. Write $I^+(\Ga)$ for the subset of $\ga\in I(\Ga)$ for which the intersection $\ga X_1\cap X_2$ is positive. We finish this section with two easy lemmas that we will use to prove Theorem \ref{thm:cycle} in \S\ref{sec:thm1}. 

\begin{lem}\label{lem:endgame}
Take $\Ga$ as in the preceding paragraph. Let $\Ga'<\Ga$ be a finite-index subgroup. If $\Ga'\cap\Ga_2\ga\Ga_1=\varnothing$ for every $\ga\in I(\Ga)\bs I^+(\Ga)$, then for every $\ga'\in\Ga'$ for which $\ga'X_1\cap X_2\neq\vn$, the sign of the intersection is positive. 
\end{lem}

\begin{proof}
Denoting $\Ga_i'=\Ga'\cap\Ga_i$, there is a map $\Om'=\Ga_2'\bs\Ga'/\Ga_1'\ra\Ga_2\bs\Ga/\Ga_1=\Om$ induced by the inclusion $\Ga'\hra\Ga$. Fixing $\ga'$ as in the statement, by assumption $\Ga_2\ga'\Ga_1=\Ga_2\ga\Ga_1$, where $\ga\in I^+(\Ga)$. By Lemma \ref{lem:sign}, the sign of the intersection $\ga'X_1\cap X_2$ is the sign of $\ga X_1\cap X_2$, which is positive since $\ga\in I^+(\Ga)$. 
\end{proof}

\begin{lem}\label{lem:normal}
Suppose that $\Ga'\lhd\Ga$ is a normal subgroup. Fix $\ga\in\Ga$ and $\Ga_1,\Ga_2<\Ga$. Then $\Ga'\cap\Ga_2\ga\Ga_1\neq\vn$ if and only if $\Ga'\ga\cap\Ga_2\Ga_1\neq\vn$. 
\end{lem}

\begin{proof}
The proof is straightforward. If $\ga'=\ga_2\ga\ga_1$ with $\ga'\in\Ga'$ and $\ga_i\in\Ga_i$, then 
\[\ga_1^{-1}=(\ga')^{-1}\ga_2\ga=\ga_2(\ga_2^{-1}(\ga')^{-1}\ga_2)\ga.\]
Equivalently, $\ga_2^{-1}\ga_1^{-1}=(\ga_2^{-1}(\ga')^{-1}\ga_2)\ga$, which implies that $\Ga'\ga\cap\Ga_2\Ga_1\neq\vn$. The other direction is similar.  
\end{proof}

\section{Geometric cycles for $\Ga<\SO(p,q)$}\label{sec:thm1}

In this section we prove Theorem \ref{thm:cycle}. Fix $1\le p\le q$ and $\Lam\sbs\R^{p+q}$ as in the statement of the theorem. Let $B$ be the matrix for the bilinear form on $\Lam$ with respect to some basis, and consider the algebraic $\Q$-group 
\begin{equation}\label{eqn:G}G=\SO(B)=\{g\in\Sl_{p+q}(\C): g^t Bg=B\}.
\end{equation}
Setting $\Ga=G(\Z)\cong\SO(\Lam)$, we split the proof of the theorem into proving two statements: 
\begin{enumerate}
\item[(a)] Up to replacing $\Ga$ by a finite-index subgroup, $H_p(\Ga;\Q)$ is nonzero. 
\item[(b)] Given $n\ge1$, we can replace $\Ga$ by a finite-index subgroup so that $\dim H_p(\Ga;\Q)\ge n$. 
\end{enumerate} 
In \S\ref{sec:dagger}, we define algebraic groups $G_1,G_2$ so that $G_1(\R)\cong\SO(1,1)^p$ and $G_2(\R)\cong\SO(p,q-1)$, and we verify that the conditions of ($\dagger$) from \S\ref{sec:geocycle} can be satisfied for a good choice of $G_1,G_2<G$. In \S\ref{sec:ddagger}, we show that we can choose $\Ga<G(\Z)$ so that ($\ddagger$) is also satisfied. Together these prove (a). In \S\ref{sec:arrange} we prove (b) by showing how to produce many linearly independent flat cycles. 

\subsection{Choosing $G_1,G_2<G$.} \label{sec:dagger} 
Let $V=\Lam\ot_\Z\R$. 

We choose $G_1$ as a maximal $\R$-split torus contained in the centralizer of a \emph{hyper-regular} element $\tau\in G(\Z)$, in the sense of Prasad--Raghunathan \cite{prasad-raghunathan}. The group $G_1$ is defined and anisotropic over $\Q$ (i.e.\ $G_1$ does not contain any nontrivial $\Q$-split torus, which implies that $G_1(\R)/G_1(\Z)$ is compact). One can take $\tau$ to preserve a decomposition $V=U\oplus U^\perp$ defined over $\Q$ with $U^\perp$ negative definite of dimension $q-p$ and such that the action of $\tau$ on $U$ is irreducible and has $2p$ distinct real eigenvalues occurring in $(\la,1/\la)$ pairs (irreducible implies in particular that the eigenvectors of $\tau$ are not defined over $\Q$). See below for a concrete example. 

For $G_2$, we fix $\la\in\Lam$ with $\lambda\cdot\lambda<0$ and define $G_2\cong\SO(B')$, where $B'$ is the restriction of $B$ to $\la^\perp$. This group includes into $G(B)$ in an obvious way, acting trivially on $\langle\la\rangle$. 

Our groups have real points $G(\R)\cong\SO(p,q)$, $G_1(\R)\cong\SO(1,1)^p\cong (\R^\ti)^p$, and $G_2(\R)\cong\SO(p,q-1)$. The associated symmetric spaces $X$, $X_1$, $X_2$ have dimensions $pq$, $p$, $p(q-1)$, respectively.

{\bf Example.} We give an explicit example of the group $G_1$ for $\SO(2,2)$ by an ad hoc construction. Consider the number field $F=\Q(\al)$, where $\al$ is a root of $t^4-12t^3+23t^2-12t+1$. The elements $\al$ and $\fr{1}{7}(16\al^3-180\al^2+233\al-12)$ are units in $\ca O_F$ and act on $\ca O_F\cong\Z^4$ by the matrices 
\[\tau_1=\left(\begin{array}{rrrrr}-1&0&0&-7\\2&0&0&13\\-3&1&0&-22\\2&0&1&13
\end{array}\right)
\>\>\>\text{ and }\>\>\>\>\>
\tau_2=\left(\begin{array}{rrrr}-7&-16&-12&-9\\14&28&20&15\\-9&-17&-12&-10\\2&4&3&3
\end{array}\right).\]
These matrices preserve the bilinear form with matrix 
\[B=\left(\begin{array}{cccc}6&2&2&19\\2&0&1&12\\2&1&0&1\\19&12&1&0
\end{array}\right),
\]
which has signature (2,2) and is unimodular ($\det B=1$). 
Let $G_1<\SO(B)$ be the centralizer of $\tau_1$. Since $\tau_1$ has 4-distinct real eigenvalues (none with norm 1), $G_1(\R)\cong\SO(1,1)^2$ is a maximal torus in $\SO(B)(\R)$. Furthermore, $G_1$ is $\Q$-anisotropic because the eigenvectors of $\tau_1$ are not defined over $\Q$. Note that $G_1(\Z)<G_1(\R)$ is cocompact since it contains $\langle \tau_1,\tau_2\rangle\cong\Z^2$. This construction can also be used to give examples in $\SO(2,q)$ for $q>2$ by taking the direct sum with a negative definite form and extending by the identity. In this case, the $\R$-points of the centralizer of $\tau_1$ is $\SO(1,1)^p\ti\SO(q-p)$. 

{\bf Generic pairs $G_1,G_2$.} We return to the general setup $G_1,G_2<G=\SO(B)$. We want to choose $G_1,G_2$ so that $X_1\cap X_2=\{o\}$ is a single point. This leads us to define a notion of generic pairs $G_1,G_2$, which will be useful at various points. We say that $G_1,G_2$ are \emph{generic} if $G_1\cap G_2=\{\id\}$. We interpret this condition as an equality of algebraic groups from the ``functor of points" point-of-view, so that $G_1(F)\cap G_2(F)=\{\id\}$ for each field extension $F/\Q$. In particular, we will use this for $F=\Q,\R, \Q_p$. 

For example, $G_1(\R)\cap G_2(\R)=\{\id\}$ implies that the intersection $X_1\cap X_2$ is transverse, i.e.\ either a point or the empty set (it's empty if there is no maximal compact subgroup of $G(\R)$ that intersects both $G_1(\R)$ and $G_2(\R)$ in respective maximal compact subgroups). To see this implication, suppose that $\dim X_1\cap X_2\ge1$. Without loss of generality we assume $o\in X_1\cap X_2$. Then $\dim(T_oX_1\cap T_oX_2)\ge1$ implies that $\dim (\mf g_1\cap \mf g_2)\ge1$, where $\mf g_i$ is the Lie algebra of $G_i(\R)$. Finally, this implies that $\dim (G_1(\R)\cap G_2(\R))\ge1$. 

The following proposition gives a sufficient condition for $G_1,G_2$ to be generic. To state it, consider the action of $G(\Q)$ on $V_\Q:=\Lam\ot_\Z\Q$. The group $G_1(\Q)$ preserves a subspace $U$ of signature $(p,p)$ and acts trivially on $U^\perp$. The group $G_2(\Q)$ preserves $\la^\perp\sbs V_\Q$ and acts trivially on $\langle\la\rangle$. 

\begin{prop}\label{prop:generic}
Take $G_1,G_2<G$ and take $U,\langle\la\rangle\sbs V_\Q$ as in the preceding paragraph. If $\la\notin U^\perp$, then $G_1,G_2$ are generic. 
\end{prop}

\begin{proof}
First we set some notation. Let $\tau\in G(\Z)$ be the hyper-regular element used to define $G_1$. Let $V_\Q=U\oplus U^\perp$ the decomposition preserved by $\tau$, where $U^\perp$ is negative-definite of dimension $q-p$ and $\tau$ acts irreducibly on $U$. Let $\la\in\Lam$ be the vector used to define $G_2$. Observe that $G_2$ can be described in terms of a centralizer: let $\ep\in\gl_{p+q}(\Q)$ be the automorphism of $V_\Q$ that fixes $\la$ and acts as $-1$ on its orthogonal complement. Then $G_2$ is the subgroup of the centralizer of $\ep$ in $G(\Q)$ that acts trivially on $\langle\la\rangle$. 

For an extension $F/\Q$, we denote $C_\tau(F),C_\ep(F)$ the centralizers of $\tau$ and $\ep$ in the (vector) space of $(p+q)\ti(p+q)$ matrices over $F$. 

{\it Claim.} If $a\in C_\tau(\Q)\cap C_\ep(\Q)$, then $a$ acts as on $U$ by multiplication by a scalar $x\in\Q$. 

{\it Proof of Claim.} Since $\ep(\la)=\la$ and $a$ commutes with $\ep$, $\la$ is an eigenvector of $a$, i.e.\ $a(\la)=x\la$ for some $x\in\Q$. Write $\la=u+v\in U\oplus U^\perp$. We know $u\neq0$ because we're assuming $\la\notin U^\perp$. Also $a$ preserves $U^\perp$ because $\tau\rest{}{U^\perp}=\id$ and $a$ commutes with $\tau$. Then $a$ also preserves $U=(U^\perp)^\perp$. Then 
\[xu+xv=x(u+v)=a(u+v)=a(u)+a(v)\]
implies that $u$ and $v$ are both eigenvectors for $a$ with eigenvalue $x$. In addition 
\[a(\tau^i(u))=\tau^i(a(u))=x\tau^i(u)\]
so $\tau^i(u)$ is also an eigenvector for $a$ with eigenvalue $x$ for each $i\in\Z$. Since $u\in U$ and $\tau$ acts irreducibly on $U$, this implies that $a$ acts on $U$ by multiplication by $x$. This proves the claim.

Using the claim, it follows that if $F/\Q$ is an extension then any $a\in C_\tau(F)\cap C_\ep(F)$ acts on $U\ot F$ by a scalar $x\in F$. This is because the conditions $a\tau=\tau a$ and $a\ep=\ep a$ give a system of linear equations defined over $\Q$, so the set of solutions is described independently of the field. 

Now we finish the proof of the proposition by showing that $G_1(F)\cap G_2(F)=\{\id\}$ for any field extension $F/\Q$. Since $G_1(F)\cap G_2(F)\sbs C_\tau(F)\cap C_\ep(F)$, any $a\in G_1(F)\cap G_2(F)$ acts on $U\ot F$ by multiplication by a scalar $x\in F$. But since $a$ acts by an isometry, $x^2=1$ so $x=\pm1$. To show $a=\id$ we want to show $x=1$ (we already know that $a$ acts as $\id$ on $U^\perp$ since $a\in G_1(F)$). Since $a\in G_2(F)$, $a(\la)=\la$. Writing $\la=u+v$ as before, then
\[u+v=\la = a(\la)=a(u)+a(v)=xu+v.\] Since $u\neq0$, this implies $x=1$. This completes the proof. 
\end{proof}

Note that the hypothesis $\la\notin U^\perp$ is automatically satisfied when $p=q$ since then $U^\perp=0$. In addition, given $G_1$ and $G_2$, we can replace $\la$ with $\la'$ so that the rational lines are $\langle\la'\rangle$ and $\langle\la\rangle$ are arbitrarily close and $G_1,G_2'$ are generic. 

\subsection{Eliminating intersections}\label{sec:ddagger} 

In this section we start with $G_1,G_2$ generic with $X_1\cap X_2=\{o\}$ and with $\Ga<G(\Z)$ that preserves orientation on $X$ and so that $\Ga_i=G_i\cap\Ga$ preserves orientation on $X_i$ and $X$. We've already explained why this is possible. Here we show that we can find finite-index $\Ga'<\Ga$ so that if $\ga'\in\Ga'$ and $\ga'X_1\cap X_2\neq\vn$, then $\ga'\in G_2^+(\R)G_1^+(\R)$. This will prove that $\Ga'$ satisfies ($\ddagger$) and finish part (a) of our proof of Theorem \ref{thm:cycle}. 

{\bf Orientations and spinor norm.} Recall that $G_i^+(\R)<G_i(\R)$ denotes the subgroup that preserves orientation on $X_i$ and on $X$. We explain how to determine these groups in our situation. For this, the spinor norm plays an important role. 

Let $F/\Q$ be a field extension (we will only use $F=\Q,\R,\Q_p$). Then $G(F)$ is a group of orthogonal transformations of the quadratic space $V_F=\Lambda\ot_\Z F$, and the spinor norm $\theta_F:G(F)\ra F^\ti/(F^\ti)^2$ is a homomorphism, defined as follows. Any $g\in G(F)$ can be expressed as a product of reflections $g=R^{x_1}\cdots R^{x_k}$, where $R^{x}$ denotes the reflection about the orthogonal complement of $x\in V_F$. Then one defines
\[\theta_F(g)=\prod_{i=1}^k x_i\cdot x_i\text{ mod} (F^\ti)^2,\]
which is well-defined independent of the choice of reflections. For more information, see \cite[\S55]{omeara} and also \cite[\S4]{millson-raghunathan}.

In particular, $G(\R)\cong\SO(p,q)$ has two components, detected by the spinor norm $\ta: G(\R)\ra\R^\ti/(\R^\ti)^2$.

\begin{lem}\label{lem:spinor}
Let $G=\SO(B)$ with $G(\R)\cong\SO(p,q)$. If $p+q$ is even, then $G(\R)$ preserves the orientation on $X=G(\R)/K$. If $p+q$ is odd, then the orientation preserving subgroup of $G(\R)$ is the kernel of the spinor norm homomorphism. 
\end{lem}

From the lemma, it follows that $G_2(\R)\cong\SO(p,q-1)$ preserves orientation on $X_1$ if $p+q$ is odd and preserves orientation on $X$ if $p+q$ is even. Hence $g\in G_2^+(\R)$ if and only if $\theta(g)=1$. Lemma \ref{lem:spinor} is easy to check; its proof is similar to the proof of the following lemma, whose proof we give. 

\begin{lem}\label{lem:orientation}
The group $G_1(\R)$ preserves orientation on $X_1$. If $p+q$ is even, then $G_1(\R)$ preserves orientation on $X_1$. If $p+q$ is odd, then $g\in G_1(\R)$ preserves orientation on $X$ if and only if $\theta(g)=1$. 
\end{lem}

\begin{proof}
Whether or not $g\in G_1(\R)$ preserves orientation on $X_1$ or $X$ depends only on the component of $g\in G_1(\R)\cong(\R^\ti)^p$. Thus it suffices to consider the action of elements of $G_1(\R)\cap K\cong\{\pm1\}^p$. This allows one to reduce to the tangent space at the basepoint $o\in X$, where the action of $K$ is the adjoint action. 

The Lie algebra of $G(\R)$ decomposes $\mf g=\mf k\oplus\mf p$, where $\mf k$ is the Lie algebra of $K$ and $\mf p\cong T_oX$. We identify $G(\R)$ with the group of isometries of $\R^{p+q}$ with respect to the form whose matrix in the basis $(e_1,\ldots,e_p,f_1,\ldots,f_q)$ is 
\[\left(\begin{array}{cc}\id_{p}&0\\0&-\id_{q}\end{array}\right).\] We choose $K\cong S(\oO(p)\ti\oO(q))$ to be the obvious block diagonal subgroup. We can identify $\mf p$ with $p\ti q$ matrices $M_{p,q}$. The adjoint action of $\left(\begin{smallmatrix}k_1\\&k_2\end{smallmatrix}\right)\in K=S(\oO(p)\ti \oO(q))$ on $\mf p$ is given by $A\mapsto k_1Ak_2^{-1}$. 

Up to conjugation in $G(\R)$, we can choose $G_1$ so that $G_1(\R)=\prod_{i=1}^p \SO(\R\{e_i,f_i\})$. Then $G_1(\R)\cap K\cong\{\pm1\}^p$ is generated by maps $\de_k$, where $\de_k$ acts by $-\id$ on $\R\{e_k,f_k\}$ and by $\id$ on $\R\{e_k,f_k\}^\perp$. The subspace $T_oX_1\sbs T_oX$ is identified with the diagonal matrices in $M_{p,q}\cong T_oX$ (i.e.\ matrices with $a_{ij}=0$ for $i\neq j$). 

Now one computes: the adjoint action of $\de_k$ on $M_{p,q}$ is given by 
\[(a_{ij})\mapsto (b_{ij}), \>\>\>\text{ where }\>\>\>b_{ij}=\begin{cases}-a_{ij}&i=k\text{ or }j=k\text{ but not both}\\a_{ij}&\text{else.}
\end{cases}\] Thus the determinant of $\de_k$ acting on $M_{p,q}\cong T_oX$ is $(-1)^{p+q-2}$, and the action on $T_oX_1\cong$(diagonal matrices) is by the identity. This lemma follows directly from this computation. 
\end{proof}

As a consequence of Lemmas \ref{lem:spinor} and \ref{lem:orientation} we immediately obtain: 
\begin{cor}
Fix $\ga\in\Ga$ with $\ga X_1\cap X_2\neq\vn$. If $\ga=g_2g_1\in G_2(\R)G_1(\R)$ with $\theta(g_2)=\theta(g_1)=1$, then the intersection $\ga X_1\cap X_2$ is positive. 
\end{cor}

{\bf Eliminating intersections.} The final step in the proof of part (a) of Theorem \ref{thm:cycle} is the following proposition. 

\begin{prop}\label{prop:eliminate}
Fix $\Ga$ as above. There exists a finite-index normal subgroup $\Ga'\lhd\Ga$ so that $\Ga'\cap\Ga_2\ga\Ga_1=\vn$ for every $\ga\in I(\Ga)\bs I^+(\Ga)$. 
\end{prop}

By Proposition \ref{prop:eliminate}, after replacing $\Ga$ by $\Ga'$, every intersection $\ga X_1\cap X_2$ has a positive sign by Lemma \ref{lem:endgame}. 

\begin{proof}[Proof of Proposition \ref{prop:eliminate}]
For a prime $p$, consider $G_i(\Z_p)$, and define $G_i^0(\Z_p)\sbs G_i(\Z_p)$ the subgroup on which the $p$-adic spinor norm is trivial. By \cite[\S4, Cor.\ 1]{millson-raghunathan} after replacing $\Ga$ by a finite-index subgroup, we can assume that $\theta$ is trivial on $\Ga$ (and hence also on $\Ga_1,\Ga_2<\Ga$). In particular, $\Ga_2\Ga_1\sbs G_2^0(\Z_p)G_1^0(\Z_p)$ for each $p$. 

Recall (c.f.\ Lemma \ref{prop:rationalproduct}) that for each $\ga\in I(\Ga)$, we can write $\ga=g_2g_1$ with $g_i\in G_i(\Q)$. This expression is unique because $G_1,G_2$ are generic. Since $\theta(\ga)=1$, we have $\theta(g_2)=\theta(g_1)\in\R^\ti/(\R^\ti)^2$. Consider the subset $\{\ga_1,\ldots,\ga_m\}\sbs I(\Ga)$ of those $\ga_j$ for which $\ga_j=g_{2,j}g_{1,j}$ with $\theta(g_{2,j})\neq1$. Note that $\{\ga_1,\ldots,\ga_m\}$ contains the complement of $I^+(\Ga)$. To prove the proposition, we will find $\Ga'\lhd\Ga$ so that $\Ga'\cap\Ga_2\ga_j\Ga_1=\vn$ for each $j$. 

For each $j$, there exists a prime $p_j$ so that $\theta_{p_j}(g_{2,j})\neq1$, where $\theta_{p_j}:G(\Q_{p_j})\ra \Q_{p_j}^\ti/(\Q_{p_j}^\ti)^2$ is the $p$-adic spinor norm (i.e.\ if $x\in\Q$ is not a square, then there exists a prime $p$ so that $x$ is not a square in $\Q_p$). For each $j$, there exists $n_j$ so that 
\[\Ga(p_j^{n_j})\ga_j\cap G_2^0(\Z_{p_j})G_1^0(\Z_{p_j})=\vn.\]
To see this, note that the groups $G_i^0(\Z_{p_j})$ are compact in $G(\Q_p)$ and hence so too is their product. Thus since $\theta_{p_j}(g_{2,j})\neq1$, the element $\ga_j$ is not contained in $G_2^0(\Z_{p_j})G_1^0(\Z_{p_j})$ (again using that $G_1,G_2$ is generic so the expression $\ga_j=g_{2,j}g_{1,j}$ is unique), and so there is a $p$-adic neighborhood $\Ga(p_j^{n_j})$ of $\ga_j$ that is disjoint from $G_2^0(\Z_{p_j})G_1^0(\Z_{p_j})$. A similar argument appears in \cite[\S8]{avramidi-phan}. 

Consider $\Ga'=\bigcap\Ga(p_j^{n_j})$. For each $j$, by construction $\Ga'\ga_j\cap G_2^0(\Z_{p_j})G_1^0(\Z_{p_j})=\vn$. This implies that $\Ga'\ga_j\cap\Ga_2\Ga_1=\vn$ since $\Ga_2\Ga_1\sbs G_2^0(\Z_{p_j})G_1^0(\Z_{p_j})$. By Lemma \ref{lem:normal} we conclude that $\Ga'\cap\Ga_2\ga_j\Ga_1=\vn$. This completes the proof. 
\end{proof}

\subsection{Arrangements of flats and proof of Theorem \ref{thm:cycle}(b).} \label{sec:arrange}

So far, we've shown that we can find $G_1$ and $\Ga<G(\Z)$ so that the associated cycle $[Y_1]\in H_p(Y)$ is nontrivial. Here we show that given $n\ge1$, we can find $G_1^{1},\ldots,G_1^{n}$ and $\Ga<G(\Z)$ so that the associated cycles $[Y_1^1],\ldots,[Y_1^{n}]\in H_p(Y)$ are linearly independent. We will assume $2\le p\le q$. The case $p=1$ (i.e.\ $X$ is hyperbolic space) is easy. 

The argument will mostly take place in the symmetric space $X$. For this reason we change our notation slightly, denoting maximal flats (previously $X_1$) by $F\sbs X$ and ``hyperplanes" (previously $X_2$) by $H\sbs X$. (Calling $H$ an hyperplane is misleading since its codimension is $p$. However, $H$ is the group preserving a hyperplane $P\sbs V$, so in that sense the name is perhaps reasonable.)

Our approach is as follows. 
\begin{enumerate}
\item For each $n\ge1$, we find collections $\{F_i\}_{1}^n$ and $\{H_i\}_1^n$ of flats and hyperplanes in $X$ so that the intersection matrix $(F_i\cdot H_j)$ is invertible. The groups $G_{F_i}, G_{H_i}$ (the analogues of $G_1,G_2$ before) will be defined over $\R$, but not necessarily defined over $\Q$. 
\item We explain why we can perturb $\{F_i\}$ (resp.\ $\{H_i\}$) so that they descend to compact (resp.\ properly embedded) submanifolds $\bar F_i,\bar H_i$ of $Y=\Ga\bs X$ for some $\Ga$. The proof of part (a) of Theorem \ref{thm:cycle} will then allow us to replace $\Ga$ by a finite-index subgroup so that the intersection matrix $(\bar F_i\cdot\bar H_j)$ is invertible. From this we conclude that the cycles $[\bar F_1],\ldots,[\bar F_n]$ are linearly independent. 
\end{enumerate} 

Before we carry out this plan, we describe $X$ in terms of a Grassmannian, and explain when a hyperplane and a flat intersect transversely. 

For much of this section, the integral structure $\Lam\sbs V$ will not play a role, so we will identify $V\cong\R^{p,q}$ with standard basis of orthogonal vectors $\R^{p,q}=\lan e_1,\ld,e_p,f_1,\ld,f_q\ran$ with $e_i\cdot e_i=1$ and $f_j\cdot f_j=-1$.

{\bf Flats, hyperplanes, and the Grassmannian of positive $p$-planes.} Define $\Gr_p(V)$ to be the space of $p$-dimensional subspaces of $V$ on which the form is positive definite, topologized as a subspace of the Grassmannian. The Lie group $\SO(V)$ acts transitively on $\Gr_p(V)$ with stabilizer a maximal compact, so the symmetric space $X=\SO(V)/K$ is isomorphic to $\Gr_p(V)$. 

Given a decomposition $V=P\oplus L$, where $L$ is a negative line, we define a ``hyperplane"
\[H=\{W\in\Gr_p(V): W\sbs P\}.\]
Given a decomposition $V=U_1\oplus\cdots\oplus U_p\oplus N$, where $U_i\cong\R^{1,1}$ and $N$ is negative-definite, we define a flat 
\[F=\{W\in\Gr_p(V): W=\oplus_{i=1}^pW\cap U_i\}\]
As a sanity check, one can see that $F\cong\R^p$ as follows. If $W\in F$, then $W\cap U_i$ is a positive line for each $i$. The space of positive lines in $\R^{1,1}$ is homeomorphic to $\R$. As one varies the choice of $W\cap U_i$ for each $i$, one gets a subspace of $\Gr_p(V)$ homeomorphic to $\R^p$. 

The following lemma characterizes when $H$ and $F$ intersect and when that intersection is transverse. Its proof is easy. 
\begin{lem}\label{lem:flat-hyperplane}
Let $V=P\oplus L$ and $V=U_1\oplus\cd\oplus U_p\oplus N$ be two decompositions as above, and let $H$ and $F$ be the associated hyperplane and flat. 
\begin{enumerate}
\item[(i)] If $P\cap U_i$ does not contain a positive line for some $i$, then $H\cap F=\vn$. 
\item[(ii)] If $P\cap U_i$ is equal to a positive line for each $i$, then $H$ and $F$ intersect transversely in a single point. 
\item[(iii)] If $P\cap U_i$ contains a positive line for every $i$, and $P\cap U_j=U_j$ for some $j$, then $\dim H\cap F\ge1$. 
\end{enumerate} 
\end{lem}

In the setup of Proposition \ref{prop:generic}, the condition $\la\notin U^\perp$ and the assumption that $\tau$ acts irreducibly on $U$ implies that when one considers the $\tau$-invariant decomposition $U\ot\R=U_1\oplus\cdots \oplus U_p$, then the projection of $\la$ to each $U_i$ is nonzero; thus $P\cap U_i$ is a proper subspace of $U_i$ for each $i$. Consequently, the condition $\la\notin U^\perp$ in Proposition \ref{prop:generic} corresponds to cases (i) and (ii) in Lemma \ref{lem:flat-hyperplane}. 

{\bf A good arrangement.} For each $n\ge1$, we construct a sequence of hyperplanes $\{H_\ell\}_{\ell=1}^n$ and flats $\{F_k\}_{k=1}^n$ defined over $\R$ so that the intersection matrix $(H_\ell\cdot F_k)$ is invertible. 

To begin, let $F_0$ be the flat corresponding to 
\[\R^{p,q}=\lan e_1,f_1\ran\op\cd\op \lan e_p,f_p\ran\op\lan f_{p+1},\ld,f_q\ran=:U_1\oplus\cdots\oplus U_p\oplus N.\]
Next we define a hyperplane $H_0$. First let $\phi:\R^{p,q}\ra\R^{p,q}$ be an automorphism that acts by the identity on $\lan e_1,f_1\ran^\perp$, and whose restriction to $\lan e_1,f_1\ran$ expands $\lan e_1+f_1\ran$ and contracts $\lan e_1-f_1\ran$. For each $m\ge0$, define $a_m,b_m$ by $\phi^m(e_1)=a_m\>e_1+b_m\>f_1$. Then $\phi^m(f_1)=b_me_1+a_mf_1$. We will also use the shorthand $e_1^m:=\phi^m(e_1)$ and $f_1^m:=\phi^m(f_1)$. By definition, $a_m^2-b_m^2=1$ for each $m$ (hence $a_m>b_m$), and $a_m,b_m\ra\infty$ and $\fr{a_m}{b_m}\ra 1$ as $m\ra\infty$. Fix $m\gg0$ (to be chosen later, depending on $n$). Let $H_0$ be the hyperplane defined by the decomposition $\R^{p,q}=P\op L$, where 
\[L=\lan f_1^m+f_2+\cd+f_p\ran,\]
and 
\[P=L^\perp=\lan e_1^m,e_2,\ld,e_p,f_1^m-f_2,f_2-f_3,\ld,f_{p-1}-f_p,f_{p+1},\ldots,f_q\ran.\]

Define flats $F_k$ for $k\ge1$ by rotating $F_0$ as follows. Fix $-1\ll\ta<0$ (to be chosen later, depending on $n$). Let $r:\R^{p,q}\ra\R^{p,q}$ be the rotation that is the identity on $\lan e_1,e_2,f_1,f_2\ran^\perp$ and restricts to each of $\lan e_1,e_2\ran$ and $\lan f_1,f_2\ran$ as a counter-clockwise rotation of angle $\ta$. Note $r\in\SO(p)\ti\SO(q)$ (note also that to define $r$ we have used $p,q\ge2$). For each $k\ge1$, define $F_k=r^k(F_0)$.

\begin{lem}[Intersection pattern]\label{lem:intersect} The intersection $H_0\cap F_0$ is nonempty. For $k\ge1$, if 
\begin{equation}\label{eqn:intersect}-(a_m+b_m)\le\tan(k\ta)\le-(a_m-b_m), \end{equation}
then $H_0\cap F_k=\vn$. 
\end{lem}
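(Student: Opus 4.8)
The plan is to pass to the Grassmannian model $X\simeq\Gr_p(V)$, where, as recalled above, $H_0=\{V'\in\Gr_p(V):V'\sbs P\}$ and, for each $k\ge 0$, the flat $F_k=r^k(F_0)$ equals $\{V'\in\Gr_p(V):V'=\bigoplus_{i=1}^{p}(V'\cap r^k(U_i))\}$ with $U_i=\lan e_i,f_i\ran$. The two decompositions behind the pair $(F_k,H_0)$ are \emph{not} in general position (one has $L\sbs N^\perp$), so I cannot quote Lemma~\ref{lem:disjoint-criterion} verbatim; instead I will use the following elementary fact, which needs no general-position hypothesis. If $V'\in F_k$ then every $V'\cap r^k(U_i)$ is a positive line: it is a positive-definite subspace of $r^k(U_i)\simeq\R^{1,1}$, hence at most a line, and the $p$ of them span the $p$-dimensional space $V'$. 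Consequently, if $P\cap r^k(U_i)$ contains no positive vector for \emph{some} $i$, then $H_0\cap F_k=\vn$; and conversely, since the subspaces $r^k(U_i)$ are mutually orthogonal, if each $P\cap r^k(U_i)$ contains a positive line $\ell_i$ then $\bigoplus_i\ell_i\in H_0\cap F_k$.

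To see $H_0\cap F_0\ne\vn$ I will exhibit a point of it: take $V'=\lan e_1^m,e_2,\ldots,e_p\ran$. Its spanning vectors are pairwise orthogonal with $e_1^m\cdot e_1^m=a_m^2-b_m^2=1$ and $e_i\cdot e_i=1$, so $V'$ is a positive-definite $p$-plane; it lies in $P$ by the definition of $P$; and $V'\cap U_1=\lan e_1^m\ran$, $V'\cap U_i=\lan e_i\ran$ for $2\le i\le p$, and $V'\cap N=\{0\}$, so $V'=\bigoplus_i(V'\cap U_i)\in F_0$. Hence $V'\in H_0\cap F_0$.

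For the disjointness claim, assume (\ref{eqn:intersect}); then $\tan(k\ta)$ is finite, so $\cos(k\ta)\ne 0$, and by the fact above it is enough to show that $P\cap r^k(U_1)$ contains no positive vector. Write a general vector of $r^k(U_1)$ as $\al\,r^k(e_1)+\be\,r^k(f_1)$. Using $P=L^\perp$ with $L=\lan f_1^m+f_2+\cdots+f_p\ran$, one computes $r^k(e_1)\cdot(f_1^m+f_2+\cdots+f_p)=b_m\cos(k\ta)\ne 0$, so belonging to $P$ imposes a single nontrivial linear equation on $r^k(U_1)$; solving it gives $P\cap r^k(U_1)=\lan v_0\ran$, where
\[
v_0=\big(a_m\cos(k\ta)+\sin(k\ta)\big)\,r^k(e_1)+b_m\cos(k\ta)\,r^k(f_1).
\]
Since $r^k(e_1)$ and $r^k(f_1)$ span an orthogonal copy of $\R^{1,1}$ in $V$, a short calculation using $a_m^2-b_m^2=1$ gives
\[
v_0\cdot v_0=\big(a_m\cos(k\ta)+\sin(k\ta)\big)^2-b_m^2\cos^2(k\ta)=\cos^2(k\ta)\,\big((a_m-b_m)+\tan(k\ta)\big)\big((a_m+b_m)+\tan(k\ta)\big).
\]
As $b_m>0$ we have $-(a_m+b_m)<-(a_m-b_m)<0$, so the right-hand side is $\le 0$ exactly when $-(a_m+b_m)\le\tan(k\ta)\le-(a_m-b_m)$, i.e.\ exactly under (\ref{eqn:intersect}). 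In that case $\lan v_0\ran$ contains no positive vector, and therefore $H_0\cap F_k=\vn$.

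The whole argument is elementary linear algebra inside a $2$-plane, so I do not expect a genuine obstacle. The one thing to be careful about is that the arrangement is deliberately not in general position, so the easy half of Lemma~\ref{lem:disjoint-criterion} has to be re-established in the form used above; after that, the only work is checking that the signature of the single explicit vector $v_0$ factors precisely so that ``$v_0\cdot v_0\le 0$'' reproduces (\ref{eqn:intersect}) --- the non-strict inequalities there corresponding to the two values of $k\ta$ at which $v_0$ becomes isotropic.
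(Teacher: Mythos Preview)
Your proof is correct and follows essentially the same route as the paper's: compute the line $P\cap r^k(U_1)$ and determine when it contains a positive vector. You are in fact more careful than the paper in two places---you exhibit the valid intersection point $\langle e_1^m,e_2,\ldots,e_p\rangle$ (the paper's stated point $\langle e_1,\ldots,e_p\rangle$ is not in $P$ once $m\ge 1$, since $e_1\cdot f_1^m=b_m\ne 0$), and you correctly observe that the pair is not in general position, so you supply directly the one direction of Lemma~\ref{lem:disjoint-criterion} that is actually needed.
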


Before we prove Lemma \ref{lem:intersect}, we show that it allows us to find a desired arrangement of $H_\ell, F_k$. 

Observe that for each $n\ge1$, we can choose $m\gg0$ and $-\fr{\pi}{4}\ll\ta<0$ so that (\ref{eqn:intersect}) is true for $k=1,\ld,n$. Thus for $k=0,\ld,n$, we have $H_0\cap F_k\neq\vn$ if and only if $k=0$. Now define $H_\ell=r^\ell(H_0)$. If $k\ge\ell$, then 
\[H_\ell\cap F_k\neq\vn\>\>\Lra\>\>r^\ell(H_0)\cap r^k(F_0)\neq\vn\>\>\Lra\>\>H_0\cap r^{k-\ell}(F_0) \neq\vn\>\>\Lra\>\>k=\ell.\]Consequently, the intersection matrix $(H_\ell\cdot F_k)$ is lower triangular with 1's on the diagonal. This matrix is invertible, as desired. 

\begin{proof}[Proof of Lemma \ref{lem:intersect}]
The first statement is easy: the intersection of $H_0$ and $F_0$ is the $p$-plane $W=\lan e_1^m,\ldots,e_p\ran$. 

Now we prove the second statement. The flat $F_k$ corresponds to the decomposition 
\[\R^{p,q}=U_1^k\op U_2^k\op U_3\op\cd\op U_p\op N,\]
where $U_i^k=r^k(U_i)\sbs\R^{p,q}$  for $i=1,2$. Note for $i=1,2$ that $U_i^k$ is spanned by $r^k(e_i), r^k(f_i)$, and $r^k(e_1)=\cos(k\ta)e_1+\sin(k\ta)e_2$ and $r^k(e_2)=-\sin(k\ta)e_1+\cos(k\ta)e_2$, and the same formulas hold when $e_1,e_2$ are replaced by $f_1,f_2$.  

We will compute $P\cap U_1^k$ and see under what conditions the intersection is a positive line. If $v\in P\cap U_1^k$, then we can write 
\begin{equation}\label{eqn:Pcoeff}\begin{array}{llll}v&=&A_1(a_me_1+b_mf_1)+A_2e_2+B_1(b_me_1+a_mf_1-f_2)\\[2mm]&&+A_3e_3+\cd+A_pe_p+B_2(f_2-f_3)+\cd+B_{p-1}(f_{p-1}-f_p)+B_{p+1}f_{p+1}+\cd+B_qf_q\end{array}\end{equation}
and also 
\begin{equation}\label{eqn:U1k}v=X\big(\cos(k\ta)e_1+\sin(k\ta)e_2\big)+Y\big(\cos(k\ta)f_1+\sin(k\ta)f_2\big).\end{equation}
Since the coefficients on $e_3,\ld,e_p$ and $f_3,\ld,f_q$ are zero in (\ref{eqn:U1k}), $A_i=0\text{ for }i\ge3$  and $B_j=0$ for $j\ge2$. Then setting equations (\ref{eqn:Pcoeff}) and (\ref{eqn:U1k}) equal (and changing notation on the coefficients slightly), 
\[(Aa_m+Bb_m)e_1+(Ab_m+Ba_m)f_1+Ce_2-Bf_2=X\cos(k\ta)e_1+Y\cos(k\ta)f_1+X\sin(k\ta)e_2+Y\sin(k\ta)f_2.\]
We can simplify the corresponding system of equations to 
\[X\cos(k\ta)b_m+Y\sin(k\ta)b_m^2=Y\cos(k\ta)a_m+Y\sin(k\ta)a_m^2,\]
so that 
\[X=\left[\fr{a_m}{b_m}+\fr{1}{b_m}\tan(k\ta)\right]\>Y.\]
We want to know if $X^2-Y^2$ is positive or negative. Since 
$X^2-Y^2=\left(\left[\fr{a_m}{b_m}+\fr{\tan(k\ta)}{b_m}\right]^2-1\right)Y^2$, 
this is nonpositive if and only if 
$-1\le\fr{a_m}{b_m}+\fr{\tan(k\ta)}{b_m}\le 1$. 
This inequality is equivalent to (\ref{eqn:intersect}). If it holds, then $H_0\cap F_k=\varnothing$ by Lemma \ref{lem:flat-hyperplane}. This completes the proof. 
\end{proof}

{\bf Cocompact flats and rational hyperplanes.} Now we explain how any flat/hyperplane in $X$ can be perturbed to one that descends to a properly immersed submanifold of $Y=\Ga\bs X$. This will allow us to perturb the arrangement constructed above to an arrangement that descends to $Y$. 

{\it Rational hyperplanes.} We say a hyperplane $H\sbs X$ is \emph{rational} or \emph{defined over $\Q$} if the line $L$ in the corresponding decomposition $V=P\op L$ is defined over $\Q$ (equivalently, $L$ is spanned by an integral vector $\la\in\Lam$). In this case, the subgroup of $G_H$ that preserves the decomposition $P\op L$ is defined over $\Q$. Furthermore, since the $G(\Q)$ orbit of a negative rational line is dense in the space of all negative lines in $V$, any hyperplane $H\sbs X$ can be approximated by a rational hyperplane (one way to say this: for any neighborhood $\Om$ of $\pa H$ in the visual boundary $\pa X$, there exists a rational hyperplane $H'$ so that $\pa H'\sbs \Om$). 

{\it Rational flats.} We say a flat $F\sbs X$ is \emph{rational} if its stabilizer is defined over $\Q$. In this case, it descends to a properly embedded submanifold of $Y=X/\Ga$ by \cite[Thm.\ D]{schwermer-survey}, c.f.\ Theorem \ref{thm:schwermer}. The condition that $F$ is rational is not enough for the quotient in $Y$ to be compact. However, as discussed in \S\ref{sec:dagger}, by \cite{prasad-raghunathan} there exists $\tau\in G(\Z)$ whose centralizer $C_\tau(\R)$ is a Cartan subgroup and $C_\tau(\R)/(\Ga\cap C_\tau(\R))$ is compact. The element $\tau$ will preserve some decomposition $\R^{p,q}=U_{\tau,1}\op\cd\op U_{\tau,p}\op N_\tau$. The $G(\Q)$ orbit of $(U_{\tau,1},\ld,U_{\tau,p})$ in the space of all $p$-tuples $(U_1,\ld,U_p)$ of orthogonal subspaces $U_i\cong\R^{1,1}\hra\R^{p,q}$ is dense (because $G(\R)$ acts transitively on such tuples and $G(\Q)\sbs G(\R)$ is dense). Thus any flat $F\sbs X$ can be approximated by a rational flat $F'$ that is compact in the quotient $X/\Ga$. 

In summary, to prove part (b) of Theorem \ref{thm:cycle}, given $n\ge1$, we start with the arrangement $\{F_k\}_1^n$ and $\{H_\ell\}_1^n$ of flats and hyperplanes in $X$ with the lower-triangular intersection pattern. Let $\bar F_k$ and $\bar H_\ell$ be the images of these submanifolds in $Y=X/\Ga$. First we perturb to get a new arrangement of rational flats and hyperplanes with the same intersection pattern so that each $\bar F_k$ is compact and each $\bar H_\ell$ is properly immersed in $Y$. By replacing $\Ga$ by a finite-index subgroup, we can ensure that $\bar F_k$ and $\bar H_\ell$ are oriented, embedded submanifolds (Theorem \ref{thm:schwermer}). Next we apply Proposition \ref{prop:rationalproduct} to each pair $(F_k,H_\ell)$ (and the corresponding subgroups $G_{F_k}, G_{H_\ell}<G$) to conclude that after replacing $\Ga$ by yet another finite-index subgroup, we can ensure that every $\ga\in I(\Ga)$ belongs to $G_{F_k}(\Q)G_{H_\ell}(\Q)$. Then by Remark \ref{rmk:disjoint}, $\bar H_\ell$ and $\bar F_k$ intersect if and only if $H_\ell$ and $F_k$ intersect, i.e.\ the intersection matrix $(\bar H_\ell\cdot\bar F_k)$ is also lower-triangular. Finally, we can pass to a further finite-index subgroup so that the diagonal entries in the intersection matrix are all positive by the argument of \S\ref{sec:ddagger}. Therefore, $(\bar H_\ell\cdot\bar F_k)$ is invertible, which implies that the homology classes $[\bar F_1],\ld,[\bar F_n]$ are linearly independent in $H_p(Y;\Q)$. This proves part (b) of Theorem \ref{thm:cycle}. 

\section{Vector bundles with arithmetic structure group }\label{sec:interpret}

By Corollary \ref{cor:main}, the classes produced in Theorem \ref{thm:cycle} give rise to characteristic classes of manifold bundles $W_g\ra E\ra B$ with fiber $W_g= \#_g(S^{2k}\ti S^{2k})$. In this section we explain what these characteristic classes measure. This gives a new perspective on the Millson--Raghunathan construction. This will play a role in \S\ref{sec:app}.

Before we begin, we recall the classification of lattices $\Lam\sbs\R^{p+q}$ with integral, unimodular, indefinite bilinear form; see e.g.\ \cite[Ch.\ II, \S4]{milnor-husemoller}. This classification is not strictly needed for what follows, but it is helpful to have these examples in mind. If the form is odd, then there exists a basis for $\Lam$, with respect to which the form has matrix $B_{p,q}$, where 
\begin{equation}\label{eqn:form}B_{p,q}=\left(\begin{array}{cc}I_p\\&-I_q\end{array}\right).\end{equation} If the form on $\Lam$ is even, then $q=p+8\ell$ for some $\ell\ge0$ and $\Lam$ is isomorphic to $H^{\op p}\op(-E_8)^{\op\ell}$, where 
\begin{equation}\label{eqn:form2}H=(\Z^2,\left(\begin{array}{cc}&1\\1&\end{array}\right))\end{equation} and $E_8$ is the unique positive-definite, even, unimodular lattice of rank 8. 

\subsection{Vector bundles with structure group $\SO(\Lam)<\SO(p,q)$.} 
Fix $1\le p\le q$ and set $n=p+q$. Fix a lattice $\Lam\cong\Z^n$ with an integral, unimodular bilinear form of signature $(p,q)$. Fix a primitive vector $\la\in\Lam$ such that $\lambda\cdot\lambda<0$. Set $V=\Lam\ot_\Z\R$. The goal of this section is to construct a characteristic class $c_{\la}\in H^p(B\Ga;\Q)$ for certain $\Ga<\SO(\Lam)$ and show that $c_\la$ is dual to a geometric cycle $[Y_2]$ as in \S\ref{sec:thm1}. 

Let $W\ra B$ be a oriented, real vector bundle with rank $n$. Let $W_b$ denote the fiber over $b\in B$. Assume that the structure group reduces from $\gl^+(V)$ to $\SO(V)$. This is equivalent to the existence of a fiberwise bilinear form $\bsym\be=\{\be_b\}_{b\in B}$ of signature $(p,q)$. We can always reduce the structure group from $\SO(V)$ to its maximal compact subgroup $K\cong S(O(p)\ti O(q))$ (because they are homotopy equivalent and so are their classifying spaces). Such a reduction defines a decomposition $W\cong U\oplus U^\perp$, where $U=\bigcup_{b\in B}U_b$ is a rank-$p$ subbundle and $\be_b:U_b\ti U_b\ra\R$ is positive definite for each $b$. Conversely, any positive rank-$p$ subbundle $U\sbs W$ defines a reduction of the structure group to $S(O(p)\ti O(q))$. The structure group of $W\ra B$ reduces to $\SO(\Lam)$ if and only if there exists a fiberwise lattice $\bsym\Lam=\bigcup_{b\in B}\Lam_b\sbs W$ where $\Lam_b$ (with its from $\be_b)$ is isometric to $\Lam$ for each $b\in B$. 

\begin{defn}
Fix $\Lam,V$ and $\la\in\Lam$ and a bundle $W\ra B$ with structure group $\SO(\Lam)$ as above. We say that a positive rank-$p$ subbundle $U\sbs W$ is \emph{orthogonal to $\la$ at $b\in B$} if there exists an isometry $\phi:\Lam\ra\Lam_b$ so that $U_b\sbs \phi(\la)^\perp$. If $U\sbs W$ is not orthogonal to $\la$ at any $b\in B$, then we say $U$ is \emph{nowhere orthogonal to $\la$}. 
\end{defn} 

The characteristic class we define will be an obstruction to finding $U\sbs W$ that is nowhere orthogonal to $\la$. We translate the problem of finding $U$ to a problem about finding a section of an associated bundle. 

Set $\pi=\pi_1(B)$. Let $\rho:\pi\ra\SO(\Lam)$ be the monodromy of $W\ra B$. The symmetric space $X=K\bs\SO(V)$ is homeomorphic to the Grassmannian 
\[\Gr_p(V)=\{V'\sbs V: V'\text{ is positive definite and} \dim V'=p\}\]
because $\SO(V)$ acts transitively on $\Gr_p(V)$ and the stabilizer of a point is isomorphic to a maximal compact subgroup $K\sbs\SO(V)$. 

The group $\pi$ acts on $X\cong \Gr_p(V)$ via the monodromy $\rho$. For a space $Z$ with a $\pi$-action, we denote the Borel construction $Z\sslash\pi:=\fr{\wtil B\ti Z}{\pi}$, where $\pi$ acts on the universal cover $\wtil B$ by deck transformations and $\pi$ acts on $Z$ by the given $\pi$-action, and the quotient is by the diagonal action. For any such $Z$, there is a fibration $Z\sslash\pi\ra *\sslash\pi= B$ with fiber $Z$. 

Observe that for $W\ra B$ with monodromy $\rho:\pi\ra\SO(\Lam)$, a section of the associated bundle $X\sslash\pi\ra B$ is equivalent to a positive rank-$p$ subbundle $U\sbs W$. 

Let $H_\la=\{V'\in\Gr_p(V): V'\sbs\la^\perp\}\sbs X$. This is the sub-symmetric space corresponding to the subgroup $\SO(\la^\perp)<\SO(V)$. The codimension of $H_\la$ in $X$ is $p$. 

By \cite[Thm.\ D]{schwermer-survey}, there exists a torsion-free, finite-index subgroup $\Ga_\la<\SO(\Lam)$ so that the $\Ga_\la$-orbit of $H$ is embedded and admits a $\Ga_\la$-invariant orientation. (The group $\Ga_\la$ is not uniquely defined by these properties, e.g.\ for every prime $\ell$, there exists $m>0$ so that the congruence subgroup $\ker\big[\SO(\Lam)\ra\SO(\Lam/\ell^m\Lam)\big]$ satisfies these properties. The construction below works for any choice of $\Ga_\la$.) 

Fix a finite index subgroup $\Ga<\Ga_\la$, and let $H_{\la,\Ga}$ be the $\Ga$-orbit of $H_\la$ in $X$. By replacing $B$ by a finite cover, we can ensure that $\rho(\pi)<\Ga$. Set $X_0=X\sm H_{\la,\Ga}$ and consider the bundle $X_0\sslash\pi\ra B$. If $W\ra B$ has a positive rank-$p$ subbundle $U\sbs W$ that is nowhere orthogonal to $\la$, then $X_0\sslash\pi\ra B$ has a continuous section. Now we can use obstruction theory to extract a characteristic class from this situation. For this, we need to know the first nontrivial homotopy group of $X_0$. 

\begin{lem}\label{lem:homotopy} Fix $k\ge0$. If $k\le p-2$, then $\pi_k(X_0)=0$. Furthermore, 
$\pi_{p-1}(X_0)\cong\bigoplus_{\pi_0(H_{\la,\Ga})}\Z$. 
\end{lem}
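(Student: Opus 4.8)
The plan is to compute the homotopy groups of $X_0 = X \setminus H_{\la,\Ga}$ via a combination of transversality/general position and the long exact sequence of a pair, using that $H_{\la,\Ga}$ is a locally finite, embedded, codimension-$p$ totally geodesic submanifold of the contractible manifold $X \simeq \Gr_p(V)$. First I would observe that $X$ is a simply-connected (indeed contractible) smooth manifold, and $H_{\la,\Ga} = \bigsqcup_{\pi_0(H_{\la,\Ga})} H_\la'$ is a disjoint union of properly embedded closed submanifolds, each of codimension $p$ and each diffeomorphic to a copy of $H_\la$ (which is itself contractible, being a symmetric space). The key geometric input is that since the $\Ga$-orbit is embedded, it is a closed, locally finite subset, so $X_0$ is an open manifold and the inclusion $X_0 \hra X$ together with a tubular-neighborhood analysis controls the homotopy.

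The main steps: (1) For $k \le p-2$, any map $S^k \to X_0$ is nullhomotopic in $X$; by general position (transversality with the codimension-$p$ submanifold $H_{\la,\Ga}$) the nullhomotopy $D^{k+1} \to X$ can be perturbed rel boundary to miss $H_{\la,\Ga}$ entirely, since $k+1 \le p-1 < p = \codim H_{\la,\Ga}$; hence $\pi_k(X_0) = 0$ for $k \le p-2$. In particular $X_0$ is simply connected (using $p \ge 3$; the cases $p = 1, 2$ need to be checked separately, though since the lemma only asserts vanishing for $k \le p-2$ this is automatic for $p \le 2$). (2) For $k = p-1$: a map $S^{p-1} \to X$ can be made transverse to $H_{\la,\Ga}$, meeting it in finitely many points (by local finiteness and compactness of $S^{p-1}$), each contributing a local linking class; the obstruction to pushing the disk $D^p$ off $H_{\la,\Ga}$ is exactly the signed count of these intersection points, organized by which component of $H_{\la,\Ga}$ they lie on. This identifies $\pi_{p-1}(X_0)$ with $\bigoplus_{\pi_0(H_{\la,\Ga})} \pi_{p-1}(S^{p-1}) = \bigoplus_{\pi_0(H_{\la,\Ga})} \Z$, the direct sum being over components rather than a product because any single sphere meets only finitely many components. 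The cleanest way to package steps (1) and (2) is via the homotopy exact sequence of the pair $(X, X_0)$ together with excision: a neighborhood of $H_{\la,\Ga}$ deformation retracts to $H_{\la,\Ga}$, its complement in that neighborhood is a disjoint union of (rank-$p$) sphere bundles over the contractible components, so by excision and the Thom isomorphism $H_*(X, X_0) \cong H_{*-p}(H_{\la,\Ga})$, which is $\Z$ in degree $p$ on each component and $0$ below; feeding this through the long exact sequence and Hurewicz (legitimate once we know $X_0$ is simply connected) gives both claims.

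The step I expect to be the main obstacle is the careful bookkeeping in (2): showing that the intersection count is a complete invariant of the homotopy class in $X_0$ (not just a well-defined map $\pi_{p-1}(X_0) \to \bigoplus_{\pi_0} \Z$) and that it is surjective. Surjectivity requires exhibiting, for each component, a small $(p-1)$-sphere linking it once and nothing else, which uses the embeddedness and local finiteness of $H_{\la,\Ga}$ to find a tubular neighborhood meeting only that component. Injectivity amounts to the fact that a nullhomologous collection of transverse intersection points can be cancelled by a homotopy of the disk — this is a standard Whitney-type argument, but one must confirm the dimension hypotheses ($p \ge 3$ so that $X_0$ is simply connected and the Whitney trick, or rather its elementary low-dimensional analogue via the Thom-Pontryagin / excision approach above, applies). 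Using the excision-plus-Hurewicz route sidesteps most of the delicate geometry and reduces everything to the homology computation $H_*(X,X_0) \cong \bigoplus_{\pi_0(H_{\la,\Ga})} H_{*-p}(\mathrm{pt})$, so I would present the argument in that form.
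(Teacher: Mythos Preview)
Your proposal is correct and matches the paper's proof: the paper uses exactly your transversality argument for $k\le p-2$, then applies Hurewicz and defines the intersection-number map $H_{p-1}(X_0)\to\bigoplus_{\pi_0(H_{\la,\Ga})}\Z$ directly (your first approach in step (2)), verifying surjectivity via linking spheres and injectivity via the standard fact that zero algebraic intersection lets one replace the disk by a homologous one disjoint from $H_{\la,\Ga}$. Your excision/Thom-isomorphism repackaging is a fine alternative presentation but not the route the paper takes.
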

\begin{proof}
First assume $k\le p-2$. We show any map $S^k\ra X_0$ is homotopically trivial. Since $X\cong\R^{pq}$ is contractible, we obtain a diagram 
\begin{equation}\label{eqn:homotopy}
\begin{xy}
(0,10)*+{S^k}="A";
(15,10)*+{X_0}="B";
(0,0)*+{D^{k+1}}="C";
(15,0)*+{X}="D";
{\ar "A";"B"}?*!/_3mm/{i};
{\ar@{^{(}->} "A";"C"}?*!/_3mm/{};
{\ar "B";"D"}?*!/_3mm/{};
{\ar "C";"D"}?*!/_3mm/{j};
\end{xy}\end{equation}
Without loss of generality we may assume that $i$ and $j$ are smooth and $j$ is transverse to $H_{\la,\Ga}$. Since $k+1\le p-1$ and the codimension of $H_{\la,\Ga}$ is $p$, if $D$ is transverse to $H_{\la,\Ga}$, then $D\cap H_{\la,\Ga}=\vn$, which shows $i$ is homotopically trivial in $X_0$. 

By the Hurewicz theorem, $\pi_{p-1}(X_0)\cong H_{p-1}(X_0)$. Define a homomorphism $\phi:\pi_{p-1}(X_0)\cong H_{p-1}(X_0)\ra\bigoplus_{\pi_0(H_{\la,\Ga})}\Z$ as follows. Choose an orientation on each component of $H_{\la,\Ga}$. Given $i:S^{p-1}\ra X_0$, extend to $D^p\ra X$ transverse to $H_{\la,\Ga}$, and compute the algebraic intersection of $D^p$ with each component of $H_{\la,\Ga}$. 

The map $\phi$ is obviously surjective: for each component of $H_{\la,\Ga}$, one can choose a $(p-1)$-sphere in its link, and the image of these generate $\bigoplus_{\pi_0(H_{\la,\Ga})}\Z$. For injectivity, it is well-known that if $D,H$ are oriented submanifolds of an oriented manifold $X$ that intersect transversely in a finite collection of points and their algebraic intersection number is 0, then $D$ can be replaced by a homologous submanifold $D'$ with $\pa D=\pa D'$ so that $D'\cap H=\vn$. This shows that if $[S^{p-1}\ra X_0]$ is in the kernel of $\phi$, then $[S^{p-1}\ra X_0]=0$ in $H_{p-1}(X_0)$. 
\end{proof}

Applying obstruction theory (see e.g.\ \cite[Ch.\ 7]{davis-kirk}), we can try to build a section of $X_0\sslash\pi\ra B$. Assume that $B$ is a CW complex. We start by choosing a section over the 0-skeleton of $B$ and work our way up inductively defining a section on the $k$-skeleton for $k\le p-1$ using the fact that $\pi_{k-1}(X_0)=0$ for $k\le p-1$. Once we reach the $p$-skeleton we meet the first measurable obstruction, which takes the form of a cocycle $C_{\la,\Ga}(W)\in H^{p}\big(B;\pi_{p-1}(X_0)\big)$. If $C_{\la,\Ga}(W)\neq0$, then $X_0\sslash\pi\ra B$ has no continuous section, and so $W\ra B$ does not have a positive rank-$p$ subbundle $U$ that is nowhere orthogonal to $\la$. This is useful, but we are interested in a less-refined, $\Z$-valued obstruction. 

Since $H_{\la,\Ga}$ has a $\Ga$-invariant orientation, there is a preferred generator of each coordinate of $\bigoplus_{\pi_0(H_{\la,\Ga})}\Z$. We use this to define an augmentation map $\bigoplus_{\pi_0(H_{\la,\Ga})}\Z\ra\Z$. The augmentation map induces a map $H^{p}\big(B;\pi_{p-1}(X_0)\big)\ra H^{p}(B;\Z)$, which sends $C_{\la,\Ga}(W)$ to a class $c_{\la,\Ga}(W)\in H^p(B;\Z)$. 

\begin{prop}
Fix $\Lam,V$, $\la\in\Lam$, and $\Ga_\la<\SO(\Lam)$ as above. Let $B$ be a CW complex and let $W\ra B$ be a vector bundle with structure group $\Ga<\Ga_\la$. If $c_{\la,\Ga}(W)\neq0$ in $H^p(B;\Z)$, then $W\ra B$ has no positive, rank-$p$ subbundle $U\sbs W$ that is nowhere orthogonal to $\la$. Equivalently, for every positive, rank-$p$ subbundle $U\sbs W$, there exists $b\in B$ so that $U$ is orthogonal to $\la$ at $b$. 
\end{prop}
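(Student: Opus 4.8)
The plan is to prove the contrapositive: assuming that $W\to B$ \emph{does} admit a positive rank-$p$ subbundle $U\sbs W$ that is nowhere orthogonal to $\la$, I would deduce $c_{\la,\Ga}(W)=0$. The first move is to repackage $U$ as a section. As recalled in the discussion above, a positive rank-$p$ subbundle of $W$ is the same data as a section $s$ of $X\sslash\pi\to B$; unwinding the Borel construction, $s$ corresponds to a $\pi$-equivariant map $\wtil B\to\Gr_p(V)$, $\tilde b\mapsto U_{\tilde b}$, where a choice of lift $\tilde b$ of $b$ identifies $\Lambda_b$ isometrically with $\Lambda$ (and $U_b$ with $U_{\tilde b}$). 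I would then check that if $U$ is nowhere orthogonal to $\la$ then $s$ has image in $X_0\sslash\pi$, i.e.\ $s$ is a section of $X_0\sslash\pi\to B$: if $s$ hit $H_{\la,\Ga}\sslash\pi$ over some $b$, then choosing a lift $\tilde b$ would give $U_{\tilde b}\sbs(g\la)^\perp$ for some $g\in\Ga$, and composing the identification above with $g\in\Ga<\SO(\Lambda)$ would produce an isometry $\phi\colon\Lambda\to\Lambda_b$ with $U_b\sbs\phi(\la)^\perp$ --- contradicting that $U$ is nowhere orthogonal to $\la$. (This is exactly the implication asserted in the text just before Lemma \ref{lem:homotopy}.)

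With a section of $X_0\sslash\pi\to B$ in hand, I would invoke the obstruction theory already set up. By Lemma \ref{lem:homotopy}, $X_0$ is $(p-2)$-connected with $\pi_{p-1}(X_0)\simeq\bigoplus_{\pi_0(H_{\la,\Ga})}\Z$, so one can build a section of $X_0\sslash\pi\to B$ over the $(p-1)$-skeleton of $B$, and the primary obstruction to extending over the $p$-skeleton is precisely the class $C_{\la,\Ga}(W)\in H^p\big(B;\pi_{p-1}(X_0)\big)$ defined above (with local coefficient system given by the monodromy action of $\pi$ on $\pi_{p-1}(X_0)$). The classical property of this primary obstruction is that it vanishes as soon as \emph{some} section over $B^{(p-1)}$ extends over $B^{(p)}$; restricting our global section to $B^{(p-1)}$ does precisely that, so $C_{\la,\Ga}(W)=0$. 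Finally I would push forward along the augmentation $\pi_{p-1}(X_0)\to\Z$ --- a homomorphism of $\pi$-modules onto the trivial module, since the orientation of $H_{\la,\Ga}$ used to define it is $\Ga$-invariant --- to get $H^p(B;\pi_{p-1}(X_0))\to H^p(B;\Z)$ sending $C_{\la,\Ga}(W)\mapsto c_{\la,\Ga}(W)$, whence $c_{\la,\Ga}(W)=0$. This establishes the contrapositive, and the ``equivalently'' reformulation is just the observation that ``no $U$ is nowhere orthogonal to $\la$'' says exactly that every positive rank-$p$ subbundle $U$ is orthogonal to $\la$ at some $b\in B$.

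I expect the only genuinely delicate point to be the translation in the first step between ``$U$ orthogonal to $\la$ at $b$'' and ``$s$ meets $H_{\la,\Ga}$'': the subtlety is that $\Lambda_b$ is only non-canonically isometric to $\Lambda$, which is exactly why the relevant locus downstairs is the full $\Ga$-orbit $H_{\la,\Ga}$ and not $H_\la$ alone, and one genuinely uses that $\Ga<\SO(\Lambda)$ acts by honest automorphisms of $\Lambda$ to realize the needed isometries $\phi$. Everything else --- the well-definedness of $C_{\la,\Ga}(W)$ and its vanishing in the presence of a global section --- is the standard theory of primary obstructions to sections of fibrations with $(p-2)$-connected fiber, so I do not anticipate any real difficulty; the remaining work is bookkeeping.
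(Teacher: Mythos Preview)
Your proposal is correct and follows essentially the same approach as the paper: the paper does not give a separate proof of this proposition, since it is stated as a summary of the obstruction-theoretic construction in the paragraphs preceding it (the implication ``$U$ nowhere orthogonal to $\la$ $\Rightarrow$ section of $X_0\sslash\pi\to B$'' and the standard fact that the primary obstruction $C_{\la,\Ga}(W)$ vanishes when a global section exists). Your write-up simply formalizes that discussion as a contrapositive argument, with the same ingredients in the same order.
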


{\it Remark.} If $C_{\la,\Ga}=0$, then there exists of a section of $X_0\sslash\pi\ra B$, but this does not ensure that there exists $U\sbs W$ that is nowhere orthogonal to $\la$ since $X_0=X\setminus H_{\la,\Ga}$ is not the complement of the full orbit of $H_\la$ under $\SO(\Lam)$. Note also that if $\Ga'\sbs\Ga$, then $H_{\la,\Ga'}\sbs H_{\la,\Ga}$, so it is possible that $C_{\la,\Ga}\neq0$ but $C_{\la,\Ga'}=0$ (and similarly for $c_{\la,\Ga}$ and $c_{\la,\Ga'}$). 

If $B$ is a closed, oriented $p$-manifold, then we can evaluate $c_{\la,\Ga}(W)\in H^p(B;\Z)$ on the fundamental class to get an integer $\lan c_{\la,\Ga}(W),[B]\ran\in\Z$, which is computed as follows. We have a diagram 
\begin{equation}\label{eqn:reinterp}\begin{xy}
(-20,0)*+{X}="A";
(0,0)*+{X\sslash\pi}="B";
(20,0)*+{X/\Ga}="C";
(0,-10)*+{B}="D";
{\ar"A";"B"}?*!/_3mm/{};
{\ar "B";"C"}?*!/_3mm/{p};
{\ar "B";"D"}?*!/^3mm/{};
{\ar@{-->}@/_/ "D";"B"}?*!/^3mm/{u};
\end{xy}\end{equation}

Here $u$ is a section corresponding to a positive, rank-$p$ subbundle $U\sbs W$, and the map $p$ is the composition $X\sslash\pi=\fr{\wtil B\ti X}{\pi}\ra X/\rho(\pi)\ra X/\Ga$ (the first map collapses collapses $\wtil B$ to a point). Let $\bar H_{\la,\Ga}$ be the image of $H_\la$ in $X/\Ga$. By our choice of $\Ga_\la$ and the assumption $\Ga<\Ga_\la$, the inclusion $\bar H_{\la,\Ga}\hra X/\Ga$ is a proper embedding, c.f.\ \cite[Thm.\ D]{schwermer-survey}. Now tracing through the definitions, one finds that $\lan c_{\la,\Ga}(W),[B]\big\ran$ is equal to the algebraic intersection number of $p\circ u(B)$ with $\bar H_{\la,\Ga}$ in $X/\Ga$.

Applying the above construction to the universal bundle over $B\Ga$, we see that $c_{\la,\Ga}\in H^p(B\Ga)\cong H^p(X/\Ga)$ is dual to the cycle $\bar H_{\la,\Ga}$, which is a locally symmetric space for a nonuniform lattice in $\SO(\la^\perp)\cong\SO(p,q-1)$. In \S\ref{sec:thm1}, we showed that there exists $\Ga<\Ga_\la$ so that $[\bar H_{\la,\Ga}]\in H^{\text{cl}}_{pq-p}(X/\Ga)$ is nonzero. Then $c_{\la,\Ga}$ is also nontrivial.

\subsection{Vector bundles with structure group $\Sl_n(\Z)$.} The construction of the previous section can be repeated in other situations. Here we remark on a version for vector bundles with structure group $\Sl_n(\Z)$. We will use this in \S\ref{sec:app} to give an application similar to Corollary \ref{cor:main} to odd-dimensional manifolds. 

Fix the standard lattice $\Z^n<\R^n$. Let $\de=(P,L)$ denote be a pair of subspaces of $\R^n$ defined over $\Q$ such that $\R^n=P\op L$ and $\dim L=1$. For every such $\de$, we will associate a finite index subgroup $\Ga_\de<\Sl_n(\Z)$ and for every $\Ga<\Ga_\de$ we will define a characteristic class $c_{\de,\Ga}\in H^{n-1}(B\Ga;\Z)$ for real vector bundles $W\ra B$ with structure group in $\Ga$. 

Suppose $W\ra B$ is a real oriented vector bundle of rank $n$. The structure group reduces from $\gl_n^+(\R)$ to $\Sl_n(\Z)$ if and only if $W$ admits a fiberwise lattice $\bsym\Lam$. A reduction of the structure group from $\gl_n^+(\R)$ to its maximal compact $\SO(n)$ corresponds to a fiberwise inner product $\bsym\be$ on $W$. 

\begin{defn}Fix $\de=(P,L)$ and $W\ra B$ and $\bsym\Lam\sbs W$ as above. For a fiberwise inner product $\bsym\be$, we say that a $(P,L)$ is \emph{$\bsym\be$-orthogonal at $b\in B$} if there exists an isomorphism $\phi:(\R^n,\Z^n)\ra (W_b,\Lam_b)$ so that $\phi(P)$ and $\phi(L)$ are orthogonal with respect to $\be_b$. If $(P,L)$ is not $\bsym\be$-orthogonal at any $b\in B$, we say $(P,L)$ is \emph{nowhere $\bsym\be$-orthogonal}.
\end{defn}

We can translate the problem of finding an inner product $\bsym\be$ so that $(P,L)$ is nowhere $\bsym\be$-orthogonal to a problem of finding a section of an associated bundle. Let $X=\SO(n)\bs \Sl_n(\R)$. This symmetric space can be identified with the space of unit volume inner products on $\R^n$.  There is a bijective correspondence between fiberwise inner products $\bsym\be$ on $W\ra B$ and sections of $X\sslash\pi\ra B$, where $\pi=\pi_1(B)$ acts on $X$ via the monodromy $\rho:\pi\ra\Sl_n(\Z)$.  

Consider the submanifold $H_\de=\{\text{inner products such that } \R^n=P\op L\text{ is orthogonal}\}\sbs X,$ 
which is a sub-symmetric space for $\Sl_{n-1}(\R)\ti\R$. By \cite[Thm.\ D]{schwermer-survey}, we can find a torsion-free subgroup $\Ga_\de<\Sl_n(\Z)$ so that the $\Ga_\de$-orbit of $H_\de$ in $X$ is embedded and has a $\Ga_\de$-invariant orientation. Fix a finite-index subgroup $\Ga<\Ga_\de$. Denote the $\Ga$ orbit of $H_\de$ in $X$ by $H_{\de,\Ga}$, and set $X_0=X\sm H_{\de,\Ga}$. We replace $B$ with a finite cover so that the monodromy $\rho:\pi\ra\Sl_n(\Z)$ factors through $\Ga$. 

If $W\ra B$ admits an inner product $\bsym\be$ so that $(P,L)$ is nowhere $\bsym\be$-orthogonal, then $X_0\sslash\pi\ra B$ admits a continuous section. Similar to Lemma \ref{lem:homotopy}, we compute $\pi_k(X_0)=0$ for $k\le n-3$ and $\pi_{n-2}(X_0)\cong\bop_{\pi_0(H_{\de,\Ga})}\Z$. Then there is an obstruction class $C_{\de,\Ga}(W)\in H^{n-1}(B;\pi_{n-2}(X_0))$, which maps to a class $c_{\de,\Ga}(W)\in H^{n-1}(B;\Z)$ under the map induced by the augmentation $\bop_{\pi_0(H_{\de,\Ga})}\Z\ra\Z$. 

We summarize the above discussion with the following proposition. 
\begin{prop}
Fix $\de=(P,L)$ and $\Ga_\de<\Sl_n(\Z)$ as above. Let $B$ be a CW complex and let $W\ra B$ be a vector bundle with structure group $\Ga<\Ga_\de$. If $c_{\de,\Ga}(W)\neq0$ in $H^{n-1}(B;\Z)$, then $W\ra B$ does not admit an inner product $\bsym\be$ so that $(P,L)$ is nowhere $\bsym\be$-orthogonal. Equivalently, for every inner product $\bsym\be$ on $W$ there exists $b\in B$ so that $(P,L)$ is $\bsym\be$-orthogonal at $b$. 
\end{prop}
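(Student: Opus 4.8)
The plan is to repeat the obstruction-theoretic argument given above for the $\SO(\Lam)$ case, now run in the contrapositive. Concretely, I would show that if $W\ra B$ admits a fiberwise inner product $\bsym\be$ for which $(P,L)$ is nowhere $\bsym\be$-orthogonal, then $c_{\de,\Ga}(W)=0$; negating both sides then gives the stated implication, and the ``equivalently'' clause is just the contrapositive of the definition of \emph{nowhere $\bsym\be$-orthogonal}.

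First I would invoke the dictionary set up before the statement. After passing to the finite cover of $B$ on which the monodromy $\rho\colon\pi\ra\Sl_n(\Z)$ factors through $\Ga$, sections of the associated bundle $X\sslash\pi\ra B$ (with $X=\SO(n)\bs\Sl_n(\R)$ the space of unit-volume inner products on $\R^n$) are in bijection with fiberwise inner products $\bsym\be$ on $W$, and under this bijection a section lands in the open subbundle $X_0\sslash\pi\ra B$, where $X_0=X\sm H_{\de,\Ga}$, exactly when $(P,L)$ is nowhere $\bsym\be$-orthogonal. So it suffices to prove: \emph{if $X_0\sslash\pi\ra B$ has a continuous section, then $c_{\de,\Ga}(W)=0$}.

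Next I would apply classical obstruction theory (as in \cite[pg.\ 197]{scorpan}) to the fibration $X_0\sslash\pi\ra B$, using the homotopy computation stated just above the proposition (the analogue of Lemma \ref{lem:homotopy}): $\pi_k(X_0)=0$ for $k\le n-3$ and $\pi_{n-2}(X_0)\simeq\bigoplus_{\pi_0(H_{\de,\Ga})}\Z$. The vanishing of $\pi_k(X_0)$ for $k\le n-3$ makes a partial section over the $(n-2)$-skeleton of $B$ unobstructed and unique up to the relevant equivalence, so the primary obstruction to extending it over the $(n-1)$-skeleton is a well-defined class $C_{\de,\Ga}(W)\in H^{n-1}(B;\pi_{n-2}(X_0))$ in cohomology with the twisted coefficient system in which $\pi$ permutes the summands of $\bigoplus_{\pi_0(H_{\de,\Ga})}\Z$ according to its action on $\pi_0(H_{\de,\Ga})$. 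The $\Ga_\de$-invariant orientation of $H_{\de,\Ga}$ supplied by \cite[Thm.\ D]{schwermer-survey} (cf.\ Theorem \ref{thm:schwermer}) fixes a generator of each summand, so the augmentation $\bigoplus_{\pi_0(H_{\de,\Ga})}\Z\ra\Z$ is $\pi$-equivariant for the trivial action on the target and induces a map $H^{n-1}(B;\pi_{n-2}(X_0))\ra H^{n-1}(B;\Z)$ sending $C_{\de,\Ga}(W)$ to $c_{\de,\Ga}(W)$. If $X_0\sslash\pi\ra B$ has a global section, then in particular it has a section over the $(n-1)$-skeleton, so $C_{\de,\Ga}(W)=0$, hence $c_{\de,\Ga}(W)=0$, as required.

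The only genuinely substantive input is the homotopy computation $\pi_k(X_0)=0$ for $k\le n-3$ and $\pi_{n-2}(X_0)\simeq\bigoplus_{\pi_0(H_{\de,\Ga})}\Z$, which I would prove exactly as in Lemma \ref{lem:homotopy}: $X$ is contractible, each component of $H_{\de,\Ga}$ is a copy of the symmetric space of $\Sl_{n-1}(\R)\ti\R$ of codimension $n-1$ in $X$, a transversality and dimension count kills all spheres of dimension $\le n-3$ after pushing a nullhomotopy in $X$ off $H_{\de,\Ga}$, and in the critical dimension Hurewicz together with the fact that an oriented $(n-1)$-disk transverse to the codimension-$(n-1)$ locus $H_{\de,\Ga}$ with zero algebraic intersection can be replaced by a homologous disk disjoint from $H_{\de,\Ga}$ identifies $\pi_{n-2}(X_0)$ with the direct sum of intersection numbers over $\pi_0(H_{\de,\Ga})$. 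The main bookkeeping hazard is keeping the twisted coefficient system straight and checking that the augmentation is a map of $\pi$-modules; this is precisely where the $\Ga$-invariant orientation of $H_{\de,\Ga}$, and hence the hypothesis $\Ga<\Ga_\de$, is used.
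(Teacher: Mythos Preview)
Your proposal is correct and follows essentially the same approach as the paper: the proposition is stated there as a summary of the obstruction-theoretic discussion preceding it, and you have accurately reconstructed that discussion (dictionary between inner products and sections of $X\sslash\pi\ra B$, the homotopy computation for $X_0$ via the codimension-$(n-1)$ transversality argument, and the vanishing of the primary obstruction when a global section exists). Your explicit attention to the twisted coefficient system and the role of the $\Ga$-invariant orientation in making the augmentation $\pi$-equivariant is a useful elaboration that the paper leaves implicit.
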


The class $c_{\de,\Ga}\in H^{n-1}(B\Ga)\cong H^{n-1}(X/\Ga)\cong H_{(n^2-n)/2}^{\text{cl}}(X/\Ga)$ is dual to the cycle $[\bar H_{\de,\Ga}]\in H_{(n^2-n)/2}^{\text{cl}}(X/\Ga)$ represented by the image of $H_\de$ in $X/\Ga$. Compare with the discussion following (\ref{eqn:reinterp}). By a theorem of Avramidi--Nguyen-Phan \cite{avramidi-phan} for a subgroup $\Ga<\Ga_\de$ of sufficiently large index, the homology class $[\bar H_{\de,\Ga}]\in H_{(n^2-n)/2}^{\text{cl}}(X/\Ga;\Q)$ is nontrivial.

\section{Applications to manifold bundles}\label{sec:app}

In this section and the appendix, we give applications of Theorem \ref{thm:cycle}.

\subsection{4-manifolds, K3 surfaces bundles, and the global Torelli theorem} \label{sec:K3}

Let $M$ be a closed oriented 4-manifold. As in the introduction, we use $\Lam_M$ to denote $H_2(M;\Z)/\text{torsion}$ with its intersection form. Assume that $\Lam_M$ is indefinite, and let $(p,q)$ be the signature. Up to switching the orientation, we may assume $p\le q$. By Theorem \ref{thm:cycle}, when $p$ is odd, there exists a finite-index subgroup $\Ga<\SO(\Lam_M)$ so that $H_p(\Ga;\Q)\neq0$.

\begin{qu}\label{q:dim4}
Does the image of $H_p\big(B\Diff^\Ga(M);\Q\big)\ra H_p\big(B\Ga;\Q\big)$ intersect the subspace spanned by flat cycles nontrivially? 
\end{qu}

One could ask a similar question for homeomorphisms or homotopy automorphisms. There does not seem to be a good reason for the answer to Question \ref{q:dim4} to be ``Yes", other than the evidence provided by Corollary \ref{cor:H2} and Theorem \ref{appendix:theorem} below. 

{\it Example.} Fix $1\le p\le q$ and let $M=M_{p,q}:=\big(\#_p\CP^2\big)\#\big(\#_q\ov{\CP^2}\big)$. Then the form on $\Lam_M$ has matrix $B_{p,q}$ (defined in (\ref{eqn:form})) and $\alpha: \Diff(M)\ra\oO(\Lam_M)$ is surjective if $p+q\le8$ or $p\ge2$. This follows from \cite[Thm.\ 2]{wall_diff4mfld}; which shows that $\al$ is surjective when $p+q\le 8$ or $M=N\#(S^2\ti S^2)$ is simply connected and $Q_N$ is indefinite. Since $\CP^2\#\ov{\CP^2}\#\ov{\CP^2}\cong(S^2\ti S^2)\#\ov{\CP^2}$ (see e.g.\ \cite[pgs.\ 124,151]{scorpan}), the hypotheses of Wall's theorem are true for $M_{p,q}$ when $p\ge2$. This gives many concrete examples to study Question \ref{q:dim4}. 

One does not necessarily need to restrict to flat cycles. In particular, for $M_{1,q}$, the group $\oO(\Lam_M)$ is a nonuniform lattice in $\oO(1,q)$. One easy source of homology of finite-index subgroups $\Ga\sbs\oO(\Lam_M)$ are classes $[T]\in H_{q-1}(\bb H^q/\Ga;\Q)\cong H_{q-1}(\Ga;\Q)$ represented by a cusp cross-section $T\sbs\bb H^q/\Ga$. When $\bb H^q/\Ga$ has at least 2 cusps, these classes are always nontrivial. Note that $T$ is finitely covered by a flat torus, but in contrast to flat cycles $T\hra\bb H^q/\Ga$ is not an isometric embedding. Even so, it would be interesting to understand the analogue of Question \ref{q:dim4} for these homology classes. Note that this question is most reasonable for $q<10$ since the image of $\al$ is infinite index in $\oO(\Lam_M)$ when $q\ge10$ \cite{friedman-morgan}. 

In the remainder of this section we study Question \ref{q:dim4} in the case $M$ is a K3 surface (i.e.\ a smooth 4-manifold diffeomorphic to a K3 surface). Here the form on $\Lam_M$ has matrix $H^{\op3}\oplus(-E_8)^{\op2}$ (the notation is explained in (\ref{eqn:form2})). Then $\SO(\Lam_M)$ is a lattice in $\SO(3,19)$. We will be interested in the maps 
\begin{equation}\label{eqn:K3}H_*(B\Diff^\Ga(M);\Q)\xra{\al_1} H_*(B\pi_0\Diff^\Ga(M);\Q)\xra{\al_2}H_*(B\Ga;\Q),\end{equation}
for $\Ga<\SO(\Lam_M)$. By Theorem \ref{thm:cycle}, we can find $\Ga$ and $z\neq0\in H_3(B\Ga;\Q)$. We will study whether or not $z$ is in the image of $\al_2$ and $\al_1\circ\al_2$.

\begin{thm}\label{thm:K3}
Let $M$ be a smooth oriented 4-manifold diffeomorphic to a K3 surface. There exists a finite-index subgroup $\Ga_M'<\SO(\Lam_M)$ so that for each finite-index subgroup $\Ga<\Ga_M'$ and for each $i\ge0$, the map $\al_2:H_i\big(B\pi_0\Diff^\Ga(M);\Q\big)\ra H_i\big(B\Ga;\Q\big)$ is surjective. 
\end{thm}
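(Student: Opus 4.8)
The plan is to construct a genuine section of the group homomorphism $\al\colon\pi_0\Diff^\Ga(M)\ra\Ga$ and then read off the surjectivity of $\al_2$ formally; the section will come from the monodromy of the universal family of Einstein metrics on $M$, so the substantive input is the differentiable global Torelli theorem for K3 surfaces. Let $X\cong\Gr_3^+(\Lam_M\ot_\Z\R)$ be the symmetric space of $\SO(3,19)$ (the space $X$ of \S\ref{sec:thm1} for $(p,q)=(3,19)$), identified with the space of positive-definite $3$-planes in $\Lam_M\ot_\Z\R$; it is contractible.

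First I would recall the shape of the theorem. Assigning to a unit-volume Einstein metric on $M$ the positive $3$-plane spanned by its three hyperk\"ahler forms induces an identification of the moduli space of such metrics with $\oO^+(\Lam_M)\backslash X^\circ$, where $\oO^+(\Lam_M)<\oO(\Lam_M)$ is the index-two subgroup preserving orientations of positive $3$-planes --- by the theorems of Borcea and Donaldson this is exactly the image of $\pi_0\Diff(M)\ra\oO(\Lam_M)$ --- and
\[X^\circ:=X\sm\bigcup_{\substack{v\in\Lam_M\\ v\cdot v=-2}}\{\,\Pi\in X:\Pi\perp v\,\}.\]
For each such $v$ the set $\{\Pi\in X:\Pi\perp v\}$ is a totally geodesic copy of $\Gr_3^+(v^\perp)$ of \emph{codimension $3$} in $X$; hence $X^\circ$ is the complement, inside the contractible manifold $X$, of a locally finite union of codimension-$3$ submanifolds, and is in particular simply connected. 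Moreover this moduli space carries a universal $M$-bundle, smooth and fiberwise oriented, which is an honest smooth fiber bundle after restriction to a torsion-free finite-index subgroup of $\oO^+(\Lam_M)$.

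Next I would assemble the section. Fix a torsion-free finite-index subgroup $\Ga_0<\oO^+(\Lam_M)$; it acts freely on $X^\circ$, and the universal family restricts to a smooth oriented $M$-bundle $\mathcal E\ra\Ga_0\backslash X^\circ$. Because $X^\circ$ is simply connected, $\pi_1(\Ga_0\backslash X^\circ)\cong\Ga_0$, so the monodromy of $\mathcal E$ is a homomorphism $\mu\colon\Ga_0\ra\pi_0\Diff(M)$; and since the $H^2$-local system of the family is tautologically the one carried by the defining action of $\Ga_0$ on $\Lam_M$, the composite $\al\ci\mu$ is simply the inclusion $\Ga_0\hra\oO^+(\Lam_M)\sbs\oO(\Lam_M)$. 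Put $\Ga_M':=\Ga_0\cap\SO(\Lam_M)$; this has finite index in $\SO(\Lam_M)$, since $\Ga_0$ has finite index in $\oO^+(\Lam_M)$ and $\SO(\Lam_M)\cap\oO^+(\Lam_M)$ has finite index in $\SO(\Lam_M)$. Then for any finite-index $\Ga<\Ga_M'$ the restriction $s:=\mu|_\Ga$ takes values in $\pi_0\Diff^\Ga(M)=\al^{-1}(\Ga)$ and satisfies $\al\ci s=\id_\Ga$, so $s$ is a section of $\al\colon\pi_0\Diff^\Ga(M)\ra\Ga$. Passing to classifying spaces, $Bs\colon B\Ga\ra B\pi_0\Diff^\Ga(M)$ satisfies $\al_2\ci Bs\simeq\id_{B\Ga}$, whence $(\al_2)_*\colon H_i\big(B\pi_0\Diff^\Ga(M);\Q\big)\ra H_i(B\Ga;\Q)$ is split surjective for every $i\ge0$. (This argument says nothing about injectivity of $\al_2$, as it should: the Torelli kernel of $\pi_0\Diff(M)\ra\oO^+(\Lam_M)$ is in general nontrivial.)

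The main obstacle is the first step: extracting from the literature the precise form of the differentiable global Torelli theorem that is needed. Two points require care. (i) The Einstein moduli space must be identified with $\oO^+(\Lam_M)\backslash X^\circ$ for the \emph{codimension-$3$} locus $X^\circ$ above; it is this codimension bound that keeps $X^\circ$ simply connected and so makes $\mu$ defined on all of $\Ga_0$ --- if it were only defined on a proper subgroup one would not obtain a section. (ii) This moduli space must genuinely carry a smooth universal $M$-bundle realizing the tautological monodromy on $H^2$. In addition, one has to track the several index-two subgroups in play --- $\oO^+$ versus $\SO$ versus $\SO^+$, and whether the period $3$-plane is oriented --- both to ensure $\al\ci\mu$ is literally the inclusion and to confirm that $\Diff^\Ga(M)$ has finite index in $\Diff(M)$ for $\Ga<\Ga_M'$.
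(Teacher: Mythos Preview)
Your proposal is correct and follows essentially the same route as the paper: both arguments produce a group-theoretic section $\Gamma_M'\to\pi_0\Diff(M)$ by reading off the monodromy of the universal family over the Einstein moduli space, with the global Torelli theorem supplying the identification $\pi_1(\text{moduli space})\cong\Gamma_M'$ via the simply-connectedness of $X^\circ$. The paper packages this through the homotopy moduli space $\ca M_{\Ein}(M)=(\Ein(M)\times E\Diff(M))/\Diff(M)$ and its canonical map to $B\Diff(M)$, citing \cite{giansiracusa} for the computation of $\pi_1$, while you work directly with $\Ga_0\bs X^\circ$ and spell out the codimension-$3$ argument yourself; these are the same construction viewed from two angles.
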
 

Consequently, each flat cycle $z\neq0\in H_3(B\Ga;\Q)$ is in the image of $\al_2$. Theorem \ref{thm:K3} is a corollary of the global Torelli theorem (\cite{looijenga} and \cite[\S12.K]{besse}) and can be deduced from the discussion in \cite{giansiracusa}. 

\begin{proof}[Proof of Theorem \ref{thm:K3}]
Let $\Ga_M$ be the image of $\Diff(M)\ra\oO(\Lam_M)$. It is known \cite{matumoto_k3} that $\Ga_M$ is finite index in $\oO(\Lam_M)$. To prove the theorem, it suffices to show that the surjection $\Diff(M)\ra\Ga_M$ splits over a finite-index subgroup $\Ga_M'$. 

Let $\Ein(M)$ denote the space of unit-volume Einstein metrics on $M$, topologized as a subspace of all Riemannian metrics on $M$. One defines the \emph{homotopy moduli space} 
\[\ca M_{\Ein}(M):=\fr{\Ein(M)\ti E\Diff(M)}{\Diff(M)},\]
where $E\Diff(M)$ is the total space of the universal principal $\Diff(M)$ bundle over $B\Diff(M)$. There is a composition of maps
\begin{equation}\label{eqn:composition}\phi:\ca M_{\Ein}(M)\ra B\Diff(M)\ra B\Ga_M.\end{equation}

As explained in \cite[\S4-5]{giansiracusa}, the group $\pi_1\big(\ca M_{\Ein}(M)\big)$ is isomorphic to a finite-index subgroup $\Ga_M'<\Ga_M$, and $\phi$ induces the inclusion on $\pi_1$. 
\end{proof}

The last assertion in the proof will be further explained below (as part of the proof of Proposition \ref{prop:lift-K3}).

{\it Remark.} Since $\al_2$ is surjective, any homology of a lattice $\Ga<\SO(3,19)$ in the \emph{stable range} is also in the image of $\al_2$. Switching to cohomology, the stable cohomology can be described as the cohomology that is pulled back along the map
\[f:B\Ga\ra B\SO(3,19)\sim BS(O(3)\ti O(19))\ra B\SO(3).\]
Compare with \cite[\S3]{giansiracusa}. Recall $H^*(B\SO(3);\Q)\cong \Q[p_1]$, where $p_1\in H^4$ is the first Pontryagin class. According to the ranges in \cite{borel_cohoarith2}, $f$ induces an $H_i(-;\Q)$-isomorphism for $i< 1$. Unfortunately, this does not provide nontrivial elements of $H^*(B\Ga_M;\Q)$. (This is incorrectly quoted in \cite[Prop.\ 3.6]{giansiracusa}.) 

Theorem \ref{thm:K3} reduces Question \ref{q:dim4} to studying the image of $\al_1$. The author does not know of a single nontrivial class in the image of this map (or a single class that is not in the image of this map). In studying $\al_1$, we will focus on a particular type of flat cycle $z$. 

Set $V=\Lam_M\ot\R\cong H_2(M;\R)$, and let $X=\SO(V)/K$ be the symmetric space for $G=\SO(V)$. As discussed in \S\ref{sec:interpret}, there is a homeomorphism $X\cong \Gr_3(V)$. A vector $\de \in\Lam$ is called a \emph{root vector} if $\de\cdot\de=-2$. As in \S\ref{sec:interpret}, consider 
\[H_\de=\{V'\in\Gr_3(V): V'\sbs\de^\perp\}\sbs X.\]

Fix a root vector $\de$, and choose a rational flat $F\sbs X$ that intersects $H_\de$ transversely. (This can be done using the arguments of \S\ref{sec:arrange}.) By the construction of Theorem \ref{thm:cycle}, there exists $\Ga<\Ga_M'$ so that $F$ and $H_\de$ descend to homology cycles in $Y=\Ga\bs X$ that pair nontrivially. In particular, we have a nonzero class $z_0\in H_3(B\Ga;\Q)$. 

Now we discuss whether or not $z_0\in\im(\al_2\circ\al_1)$. One approach to this question is to consider the map (\ref{eqn:composition}) from the proof of Theorem \ref{thm:K3}. For each finite-index subgroup $\Ga<\Ga_M'$, define $\ca M_{\Ein}^\Ga(M)=\fr{T_{\Ein}^0(M)\ti E\Ga}{\Ga}$, where $\ca T_{\Ein}^0(M)$ is one of the two path components of $\ca T_{\Ein}^0(M)$ (these components are preserved by $\Ga_M'$). If $z_0$ is in the image of $\phi_*:H_3(\ca M_{\Ein}^\Ga(M))\ra H_3(B\Ga_M)$, then  $z_0\in\im(\al_2\circ\al_1)$. Unfortunately, the following proposition shows that this approach does not work. Nevertheless, we have an interesting Corollary \ref{cor:fiberwise-einstein}. 

\begin{prop}\label{prop:lift-K3}Let $M$ be a K3 surface. Fix $\Ga<\Ga_M'$ and $z_0\in H_3(B\Ga;\Q)$ as above. The class $z_0$ is not in the image of $\phi_*:H_3(\ca M_{\Ein}^\Ga(M);\Q)\ra H_3( B\Ga;\Q)$. 
\end{prop}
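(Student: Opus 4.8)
The plan is to analyze the space $\ca M_{\Ein}^\Ga(M)$ via the global Torelli theorem, which identifies the image of the period map on Einstein metrics with a locally symmetric space, and to show that this image is \emph{too small} to carry the cycle $z_0$. More precisely, by the discussion in \cite{giansiracusa} (building on \cite{looijenga,besse}), the space $\ca T_{\Ein}^0(M)$ of (a chosen component of) unit-volume Einstein metrics maps, via the period map, to $\Gr_3^+(V)\sm\bigcup_{\de}H_\de$, where the union ranges over all root vectors $\de\in\Lam_M$, and this map is a $\Gamma_M'$-equivariant homotopy equivalence onto its image. Consequently $\ca M_{\Ein}^\Ga(M)$ is homotopy equivalent to $\Ga\bs\big(X\sm H_{\text{root}}\big)$, where $H_{\text{root}}=\bigcup_\de H_{\de,\Ga}$ is the union of \emph{all} root hyperplanes (not just the single one $H_\de$ used to define $z_0$), and the map $\phi$ is, up to homotopy, the inclusion $\Ga\bs(X\sm H_{\text{root}})\hra \Ga\bs X = B\Ga$.

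Granting this, the proposition reduces to a purely homological statement about the inclusion $\iota: X_{\text{root}}/\Ga \hra X/\Ga$ where $X_{\text{root}} = X\sm H_{\text{root}}$: I must show $z_0 \notin \im(\iota_*: H_3(X_{\text{root}}/\Ga;\Q)\to H_3(X/\Ga;\Q))$. The key point is duality. The class $z_0$ is detected by intersection with the Borel--Moore cycle $[\bar H_{\de,\Ga}]$; dually, $z_0$ is nonzero because its Poincar\'e dual $c_{\de,\Ga}\in H^3(X/\Ga;\Q)$ pairs nontrivially with the flat $[\bar F]$. But $c_{\de,\Ga}$ is, by the obstruction-theoretic construction of \S\ref{sec:interpret}, supported on a tubular neighborhood of $\bar H_{\de,\Ga}$, which is \emph{disjoint from} $X_{\text{root}}/\Ga$ once one notes $\bar H_{\de,\Ga}\sbs H_{\text{root}}/\Ga$. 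Hence $\iota^* c_{\de,\Ga} = 0$ in $H^3(X_{\text{root}}/\Ga;\Q)$. Now if $z_0 = \iota_*(w)$ for some $w\in H_3(X_{\text{root}}/\Ga;\Q)$, then
\[
0 \neq \langle c_{\de,\Ga}, z_0\rangle = \langle c_{\de,\Ga}, \iota_* w\rangle = \langle \iota^* c_{\de,\Ga}, w\rangle = \langle 0, w\rangle = 0,
\]
a contradiction. The mild subtlety is that the pairing $\langle c_{\de,\Ga},z_0\rangle$ is really the intersection number of the flat $[\bar F]$ with $[\bar H_{\de,\Ga}]$ computed in the noncompact manifold $X/\Ga$ (a pairing between ordinary and Borel--Moore homology), so I should phrase the naturality square using the fact that $\bar F$, being compact, already lies in $X_{\text{root}}/\Ga$ and that $[\bar H_{\de,\Ga}]$ restricts to $0$ in Borel--Moore homology of $X_{\text{root}}/\Ga$ (as $\bar H_{\de,\Ga}$ is deleted); the cap product / intersection pairing is then natural for the open inclusion $\iota$ in the appropriate variance.

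The main obstacle I anticipate is justifying rigorously the identification of $\ca M_{\Ein}^\Ga(M)$ with $\Ga\bs(X\sm H_{\text{root}})$ and, in particular, that the \emph{entire} root locus $H_{\text{root}}$ (all countably many root hyperplanes) is removed --- this is exactly the content of the global Torelli theorem for Einstein (equivalently, hyperk\"ahler/Ricci-flat K\"ahler, after Yau) metrics on K3, together with the fact that a period point lies on some $H_\de$ precisely when the corresponding metric degenerates. I would cite \cite{looijenga,besse} and \cite[\S4--5]{giansiracusa} for this, and spell out only the equivariance and the $\pi_1$-statement asserted at the end of the proof of Theorem \ref{thm:K3}, namely that $\pi_1(\ca M_{\Ein}^\Ga(M))\cong\Ga$ with $\phi$ inducing the identity on $\pi_1$ (so that $\ca M_{\Ein}^\Ga(M)$ is genuinely a $\Ga$-quotient of the simply-connected complement, not just of its universal cover). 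A secondary, more technical point is that $X\sm H_{\text{root}}$ need not be simply connected a priori --- but since each $H_\de$ has codimension $3\ge 3$ in $X$ and the collection is locally finite on compacta, general position shows $\pi_1(X\sm H_{\text{root}}) = \pi_1(X) = 1$, so this causes no trouble. Everything else is formal naturality of Poincar\'e--Lefschetz duality under open inclusions, together with the support property of $c_{\de,\Ga}$ already established in \S\ref{sec:interpret}.
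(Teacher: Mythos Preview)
Your proposal is correct and follows essentially the same route as the paper: identify $\ca M_{\Ein}^\Ga(M)$ with $\Ga\bs\big(X\sm\bigcup_\de H_\de\big)$ via global Torelli, and then argue that any class in the image of $\iota_*$ can be represented by a cycle disjoint from $\bar H_\de$, hence has zero intersection with $[\bar H_{\de,\Ga}]$, contradicting the defining property of $z_0$. The paper phrases this as a direct intersection-number computation (push $z_0$ to a cycle $Z$ in the complement and compute $Z\cdot\bar H_\de=0$); you phrase the same thing dually via $\iota^*c_{\de,\Ga}=0$.

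One slip to fix: in your ``mild subtlety'' paragraph you assert that $\bar F$, being compact, already lies in $X_{\text{root}}/\Ga$. This is false---$\bar F$ was chosen precisely so that it \emph{does} meet $\bar H_\de$ (that is how $z_0$ was shown to be nonzero in the first place). Your displayed chain of equalities does not rely on this claim, so the error is inessential to the argument, but you should delete that remark.
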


\begin{cor}\label{cor:fiberwise-einstein}
If $z_0\in H_3(B\Ga;\Q)$ is in the image of $H_3(B\Diff^\Ga(M);\Q)\ra H_3(B\Ga;\Q)$, then there exists a $K3$ bundle over a $3$-manifold that does not admit any fiberwise Einstein metric. 
\end{cor}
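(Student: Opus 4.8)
The plan is to deduce the statement from a single K3--surface bundle that realizes $z_0$ geometrically. Assume $z_0$ lies in the image of $\al_*\colon H_3(B\Diff^\Ga(M);\Q)\to H_3(B\Ga;\Q)$ and fix $w\in H_3(B\Diff^\Ga(M);\Q)$ with $\al_*w=z_0$. Since the oriented bordism groups $\Omega^{SO}_i$ vanish for $i=1,2,3$, the edge map $\Omega^{SO}_3(Z)\to H_3(Z;\Z)$, $[N,g]\mapsto g_*[N]$, is an isomorphism for every space $Z$, so after clearing denominators there are an integer $m\ge1$, a closed oriented (possibly disconnected) $3$-manifold $B$, and a map $f\colon B\to B\Diff^\Ga(M)$ with $f_*[B]=mw$ in $H_3(B\Diff^\Ga(M);\Q)$. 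The map $f$ classifies a smooth oriented K3--surface bundle $\pi\colon E\to B$ whose structure group reduces to $\Diff^\Ga(M)$. The core of the argument is the claim that \emph{if $E\to B$ admits a fiberwise Einstein metric, then $\al\circ f\colon B\to B\Ga$ lifts through $\phi$, up to homotopy, to a map $\wtil f\colon B\to\ca M_{\Ein}^\Ga(M)$.} Granting the claim, a fiberwise Einstein metric on $E\to B$ would force $z_0=\al_*w=\tfrac{1}{m}(\al\circ f)_*[B]=\tfrac{1}{m}\,\phi_*\wtil f_*[B]$, hence $z_0\in\im\big(\phi_*\colon H_3(\ca M_{\Ein}^\Ga(M);\Q)\to H_3(B\Ga;\Q)\big)$, contradicting Proposition \ref{prop:lift-K3}. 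So $E\to B$ carries no fiberwise Einstein metric, and restricting $E$ to a connected component $B_0\subseteq B$ over which no fiberwise Einstein metric exists (one such component must exist, since fiberwise Einstein metrics over all components would assemble to one over $B$) produces a K3--surface bundle over a closed connected oriented $3$-manifold with no fiberwise Einstein metric, which is the assertion.

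It remains to prove the claim, and this is where the work lies; it runs through the global Torelli theorem for Einstein metrics on a K3 surface (\cite[\S12.K]{besse}, \cite{looijenga}; see also \cite[\S4--5]{giansiracusa}). Rescaled to unit volume, such a metric is Ricci--flat and has a period --- the positive-definite $3$-plane spanned by its self-dual harmonic $2$-forms --- giving a point of $\Gr_3(V)\simeq X$, where $V=\Lam_M\ot\R$. The assignment $\Ein(M)\to\Gr_3(V)$ is continuous, $\Diff(M)$-equivariant for the action on $\Gr_3(V)$ through $\al\colon\Diff(M)\to\oO(\Lam_M)$, constant on $\Diff_0(M)$-orbits, and, on passing to the quotient, identifies $\ca T_{\Ein}(M)$ with the $\oO(\Lam_M)$-invariant open subset $D:=\Gr_3(V)\sm\bigcup_\de H_\de$ obtained by deleting the sub-symmetric spaces $H_\de=\{V'\in\Gr_3(V):V'\sbs\de^\perp\}$ indexed by the root vectors $\de\in\Lam_M$; moreover $D=\ca T_{\Ein}^0(M)\sqcup\ca T_{\Ein}^1(M)$ has exactly two components, each preserved by $\Ga_M'$ and hence by $\Ga$. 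Now for our bundle $E\to B$ the $\Diff^\Ga(M)$-action on $\Gr_3(V)$ factors through $\al$ with image in $\Ga$, so the fiberwise-period-domain bundle of $E$, and in particular its sub-bundle $\xi$ with fiber $D$, has structure group $\Ga$ and is classified by $\al\circ f\colon B\to B\Ga$; a fiberwise Einstein metric determines (via the fiberwise period map) a section of $\xi$. Over each component of $B$ this section lies in the part of $\xi$ whose fiber is a single component of $D$ --- the associated $\pi_0$-bundle is trivial since $\Ga<\Ga_M'$ --- and a section of that part is exactly a $\phi$-lift of $\al\circ f$ to $\ca T_{\Ein}^0(M)\sslash\Ga=\ca M_{\Ein}^\Ga(M)$, or to the moduli space built from $\ca T_{\Ein}^1(M)$, to which Proposition \ref{prop:lift-K3} applies verbatim.

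The main obstacle is precisely this last identification: reconciling the homotopy-theoretic moduli space $\ca M_{\Ein}^\Ga(M)$ of Proposition \ref{prop:lift-K3} with the differential-geometric condition of admitting a fiberwise Einstein metric. Global Torelli does the essential work --- it is what forces the fiberwise period map to land in the correct $\Ga$-space and lets the argument bypass the (unknown) homotopy type of the Torelli subgroup of $\pi_0\Diff(M)$ --- but the two components of $\ca T_{\Ein}(M)$ and the finite base-cover implicit in the definition of $\Diff^\Ga(M)$ have to be tracked carefully; the remaining steps, including the passage from a rational homology class to a $3$-manifold bundle, are routine.
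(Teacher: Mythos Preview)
Your proof is correct and follows the same strategy as the paper's: represent the class by a map from a closed oriented $3$-manifold, pull back the universal bundle, and argue that a fiberwise Einstein metric would force $z_0$ into the image of $\phi_*$, contradicting Proposition~\ref{prop:lift-K3}. The paper's version is shorter only because it invokes directly that $\ca M_{\Ein}(M)$ is, by construction, a classifying space for K3 bundles equipped with a fiberwise Einstein metric, so the lift of the classifying map $h\colon B\to B\Diff^\Ga(M)$ to $\ca M_{\Ein}^\Ga(M)$ \emph{is} such a metric; you instead unpack this via the fiberwise period map, which is fine and perhaps more illuminating.

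One small inaccuracy: the set $D=\Gr_3(V)\setminus\bigcup_\de H_\de$ is connected (the $H_\de$ have codimension $3$ in the connected space $X$), and in the paper's conventions the period map restricts to a homeomorphism from each component $\ca T_{\Ein}^0(M)$, $\ca T_{\Ein}^1(M)$ onto $D$ separately. So your sentence ``$D=\ca T_{\Ein}^0(M)\sqcup\ca T_{\Ein}^1(M)$ has exactly two components'' is not quite right. This does not damage the argument: a fiberwise Einstein metric still yields a section of the associated $D$-bundle, and composing with the inverse of the homeomorphism $\ca T_{\Ein}^0(M)\xrightarrow{\ \sim\ }D$ gives the desired lift to $\ca M_{\Ein}^\Ga(M)$. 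Your tracking of the $\pi_0$-bundle and the two-component issue is then unnecessary, which simplifies the last paragraph.
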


Of course it may be the case that $z_0$ is \emph{not} in the image of $H_3(B\Diff^\Ga(M);\Q)\ra H_3(B\Ga;\Q)$, in which case the corollary is vacuously true. In this situation, there is a different interesting corollary. 

\begin{cor}\label{cor:nielsen}
Let $M$ be a $K3$ surface. If there exists any flat cycle $z\in H_3(B\Ga;\Q)$ is not in the image of $H_3(B\Diff^\Ga(M);\Q)\ra H_3(B\Ga;\Q)$, then the surjection $\Diff(M)\ra\pi_0\Diff(M)$ is not split. 
\end{cor}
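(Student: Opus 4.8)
\textbf{Proof plan for Corollary \ref{cor:nielsen}.}
The plan is to argue by contraposition: I will show that if the surjection $\Diff(M)\ra\pi_0\Diff(M)$ splits, then the map $\al_*=\al_2\circ\al_1: H_*(B\Diff^\Ga(M);\Q)\ra H_*(B\Ga;\Q)$ from \eqref{eqn:K3} is surjective in every degree, so that no flat cycle $z\in H_3(B\Ga;\Q)$ can fail to lie in its image. Recall that $H_3(B\Diff^\Ga(M);\Q)\ra H_3(B\Ga;\Q)$ appearing in the statement is precisely $\al_*$, and that $\al_*$ factors as $\al_2\circ\al_1$ because the $\oO(\Lam_M)$-action factors through $\pi_0\Diff(M)$.

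The first step is to record that $\al:\Diff(M)\ra\oO(\Lam_M)$ kills the identity component $\Diff_0(M)$, since an isotopy induces a homotopy and homotopic maps act identically on $H_2(M;\Z)$; hence $\al$ factors as $\Diff(M)\onto\pi_0\Diff(M)\xra{\bar\al}\oO(\Lam_M)$, the subgroup $\Diff^\Ga(M)=\al^{-1}(\Ga)$ contains $\Diff_0(M)$, and $\pi_0\Diff^\Ga(M)=\Diff^\Ga(M)/\Diff_0(M)$ is identified with $\bar\al^{-1}(\Ga)\sbs\pi_0\Diff(M)$. Given a splitting $s:\pi_0\Diff(M)\ra\Diff(M)$, for $g\in\pi_0\Diff^\Ga(M)$ one has $\al(s(g))=\bar\al(g)\in\Ga$, so $s(g)\in\Diff^\Ga(M)$; thus $s$ restricts to a splitting of $\Diff^\Ga(M)\ra\pi_0\Diff^\Ga(M)$. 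Applying $B(-)$ produces a map $B\pi_0\Diff^\Ga(M)\ra B\Diff^\Ga(M)$ that is a homotopy section of $B\Diff^\Ga(M)\ra B\pi_0\Diff^\Ga(M)$, and therefore $\al_1$ is split surjective on rational homology.

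The final step is to combine this with Theorem \ref{thm:K3}, which (for $\Ga$ inside the group $\Ga_M'$ produced there, as is the case for the flat cycles constructed in \S\ref{sec:thm1}) asserts that $\al_2$ is surjective. Surjectivity of $\al_1$ and of $\al_2$ gives surjectivity of $\al_2\circ\al_1$, so every class of $H_3(B\Ga;\Q)$---in particular every flat cycle---lies in the image of $H_3(B\Diff^\Ga(M);\Q)\ra H_3(B\Ga;\Q)$, contradicting the hypothesis. The argument is essentially formal once Theorem \ref{thm:K3} is available; the only point that needs a moment's care is that the hypothetical splitting of $\Diff(M)\ra\pi_0\Diff(M)$ is automatically compatible with the finite-index subgroups $\Diff^\Ga(M)$ and $\pi_0\Diff^\Ga(M)$, which is exactly what the factorization of $\al$ through $\pi_0\Diff(M)$ provides.
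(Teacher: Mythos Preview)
Your proposal is correct and follows essentially the same approach as the paper: argue by contraposition, observe that a splitting of $\Diff(M)\ra\pi_0\Diff(M)$ restricts to the $\Ga$-level and hence makes $\al_1$ surjective on homology, then invoke Theorem~\ref{thm:K3} to conclude $\al_2\circ\al_1$ is surjective. Your write-up is in fact more careful than the paper's in spelling out why the splitting restricts to $\Diff^\Ga(M)\ra\pi_0\Diff^\Ga(M)$ and in noting the hypothesis $\Ga<\Ga_M'$ needed for Theorem~\ref{thm:K3}.
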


\begin{proof}[Proof of Corollary \ref{cor:nielsen}]
If a splitting exists, then $H_*(B\Diff^\Ga(M);\Q)\ra H_*(B\pi_0\Diff^\Ga(M);\Q)$ would be surjective for every $\Ga<\Ga_M$. Combining this with by Theorem \ref{thm:K3}, then $H_*(B\Diff^\Ga(M);\Q)\ra H_*(B\Ga;\Q)$ is also surjective for every $\Ga<\Ga_M$. This contradictions the assumption that some flat cycle is not in the image of $H_3(B\Diff^\Ga(M);\Q)\ra H_3(B\Ga;\Q)$. 
\end{proof}

\begin{proof}[Proof of Corollary \ref{cor:fiberwise-einstein}]
Suppose there is a class $w\in H_3(B\Diff^\Ga(M);\Q)$ whose image in $H_3(B\Ga;\Q)$ is $z_0$. Up to scaling, we can represent $w$ by a map of a manifold $h:B^3\ra B\Diff^\Ga(M)$. The pullback of the universal bundle by $h$ is a K3 bundle $M\ra E\ra B$. We claim it has no fiberwise Einstein metric. 

The homotopy moduli space $\ca M_{\Ein}(M)$ is a classifying space for K3 bundles with a fiberwise Einstein metric, so $E\ra B$ admits a fiberwise Einstein metric if and only if $h$ lifts to a map $\til h:B\ra\ca M^\Ga_{\Ein}(M)$. No such lift can exist by Proposition \ref{prop:lift-K3}. Thus $E\ra B$ has no fiberwise Einstein metric. 
\end{proof}

As remarked in Theorem \ref{thm:K3}, Giansiracusa \cite[\S4-5]{giansiracusa} proves that $\pi_1(\ca M_{\Ein}(M))\cong\Ga_M'$. We begin by explaining the proof of this fact, since it will be used to prove Proposition \ref{prop:lift-K3}. For this, we give a fuller description of the topology of $\ca M_{\Ein}(M)$, which is illuminated by the global Torelli theorem. For details see \cite{looijenga}, \cite[\S12.K]{besse}, and \cite[\S4-5]{giansiracusa}. In \cite[\S4.2]{giansiracusa}, Giansiracusa shows that
\begin{equation}\label{eqn:moduli}\ca M_{\Ein}(M)\cong \fr{\ca T_{\Ein}(M)\ti E\Ga_M}{\Ga_M},\end{equation}
where $\ca T_{\Ein}(M)$ is the \emph{Teichm\"uller space}. By definition $\ca T_{\Ein}(M)$ is the quotient $\Ein(M)/\Diff_1(M)$, where $\Diff_1(M)=\ker\big[\Diff(M)\ra\Ga_M\big]$. Here the action of $\Ga_M$ on $\ca T_{\Ein}(M)$ is induced from the action of $\Diff(M)$ on $\Ein(M)$ (by pulling back metrics). The global Torelli theorem determines $\pi_0(\ca T_{\Ein}(M))$ and the topology of each component: 
\begin{itemize} 
\item The space $\ca T_{\Ein}(M)$ has two homeomorphic components $\ca T_{\Ein}(M)\cong \ca T_{\Ein}^0(M)\sqcup\ca T_{\Ein}^0(M)$, and they are permuted by the action of $\Ga_M$. 
\item Let $\Ga_M'<\Ga_M$ be the index-2 subgroup that preserves the components of $\ca T_{\Ein}(M)$. There is a $\Ga_M'$-equivariant homeomorphism between $\ca T_{\Ein}^0(M)$ and a dense subspace of $X=\SO(V)/K$: 
\begin{equation}\label{eqn:torelli}\ca T_{\Ein}^0(M)\cong X\sm\bigcup_{\de\in\De}H_\de,\end{equation}
where $\De\sbs\Lam$ is the set of roots. 
\end{itemize} 

Using this description of $\ca T_{\Ein}(M)$, it follows that $\pi_1(\ca M_{\Ein}(M))\cong\Ga_M'$ by the long exact sequence in homotopy associated to (\ref{eqn:moduli}) together with the fact that the subspaces $H_\de\sbs X$ have codimension-3, so $\ca T_{\Ein}(M)$ is simply connected. 

\begin{proof}[Proof of Proposition \ref{prop:lift-K3}] We have fixed a particular torsion-free subgroup $\Ga<\Ga_M'$ and a flat cycle $z_0\in H_3(B\Ga;\Q)$, and we wish to show $z_0$ is not in the image of $H_3(\ca M_{\Ein}^\Ga(M);\Q)\ra H_3(B\Ga;\Q)$. Recall that $z_0$ has the special property that it pairs nontrivially with the image of a root hyperplane $H_\de$ in $Y=\Ga\bs X$.

Suppose for a contradiction that there exists $w\in H_3(\ca M_{\Ein}^{\Ga}(M);\Q)$ whose image in $H_3(B\Ga;\Q)$ is $z_0$. Since $\Ga$ is torsion free, $\ca M_{\Ein}^{\Ga}(M)\cong\fr{T_{\Ein}^0(M)\ti X}{\Ga}$. There is a diagram that commutes up to homotopy: 
\[\begin{xy}
(-20,0)*+{\ca M_{\Ein}^{\Ga}(M)}="A";
(10,0)*+{B\Diff^{\Ga}(M)}="B";
(40,0)*+{B\Ga}="C";
(-20,-15)*+{\Ga\bs\ca T_{\Ein}^0(M)}="D";
(10,-15)*+{\Ga\bs X}="E";
(40,-15)*+{B\Ga}="F";
{\ar"A";"B"}?*!/_3mm/{};
{\ar "B";"C"}?*!/_3mm/{};
{\ar@{^{(}->} "D";"E"}?*!/_4mm/{f_2};
{\ar "E";"F"}?*!/_3mm/{\sim};
{\ar "A";"D"}?*!/_3mm/{};
{\ar "A";"D"}?*!/^3mm/{f_1};
{\ar@{=} "F";"C"}?*!/^3mm/{};
\end{xy}\]
The map $f_1$ is a homotopy equivalence because $\Ga$ acts freely on $\ca T_{\Ein}(M)$ so $f_1$ is a fibration with contractible fiber $\cong X$. The map $f_2$ is the inclusion induced by (\ref{eqn:torelli}). The diagram commutes up to homotopy because the two compositions induce the same map on $\pi_1$ and $B\Ga$ is Eilenberg--Maclane space. 

Let $\bar H_\de$ be the image of $H_\de$ in $\Ga\bs X$, and let $\bar F$ be a totally geodesic submanifold representing the flat cycle $z_0$. By our choice of $H_\de$ and $F$, the algebraic intersection $\bar F\cdot\bar H_\de$ is nonzero. On the other hand, the existence of $w$ implies, by the diagram above, that there is a cycle $Z\ra\Ga\bs\ca T_{\Ein}^0(M)\sbs \Ga\bs X$ that is homologous to $\bar F$. Since the $Z\ra\Ga\bs X$ factors through $\Ga\bs\ca T_{\Ein}^0(M)$, the image of $Z$ is disjoint from $\bar H_\de$, which implies that $\bar F\cdot\bar H_\de=Z\cdot\bar H_\de=0$. This is a contradiction, so the class $w$ does not exist. 
\end{proof}

\begin{rmk}
In the Teichm\"uller space $\ca T_{\Ein}^0(M)\sbs X$, as one approaches one of the subsets $H_\de\sbs X$, topologically there is an embedded sphere $f:S^2\hra M$ with $f_*[S^2]=\de$ that is being collapsed to a point \cite{anderson}. The flat cycle $\bar F\sbs \Ga\bs X$ in the proof of Proposition \ref{prop:lift-K3} only lifts to $\Ga\bs\ca T_{\Ein}^0(M)$ after finitely many points are removed. From this, one obtains a K3-surface bundle over a 3-torus with finitely many punctures (note $\bar F$ is finitely covered by $\bb T^3$). One cannot extend the bundle over the punctures without introducing singularities. The natural object that exists over the torus with the punctures filled is a ``singular" K3 bundle, i.e.\ it is a fiber bundle away from finitely many points in the base, and at each point in this finite collection, the fiber is the space obtained from a K3 surface by collapsing some embedded 2-sphere (with self-intersection $-2$) a point. 
\end{rmk}

\subsection{2-dimensional cycles and the Mather--Thurston theorem.} 

To end this section, we mention another example/application of our ideas. 

\begin{cor}\label{cor:H2}Let $M$ be smooth manifold. Suppose that $\pi_0\Diff(M)$ is commensurable with $\Sl_3(\Z)$. Then for each $n\ge1$ there exists a finite-index subgroup $\Ga<\Sl_3(\Z)$ so that 
\[\dim H_2\big(B\Diff^\Ga(M);\Q\big)\ge n.\]
\end{cor}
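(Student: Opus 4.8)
The plan is to reduce the corollary to the case of $\Sl_3(\Z)$ itself and then promote the resulting arithmetic classes to $B\Diff^\Ga(M)$ by an elementary spectral sequence argument. There are three steps: (a) produce, for each $N$, many linearly independent flat cycles in $H_2(B\Ga;\Q)$ for a suitable finite-index $\Ga<\Sl_3(\Z)$; (b) move these into $\pi_0\Diff(M)$ using the commensurability hypothesis; and (c) show they survive the map $B\Diff^\Ga(M)\ra B\pi_0\Diff^\Ga(M)$.

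For (a), I would run the arrangement argument of \S\ref{sec:arrange} in the symmetric space $X=\SO(3)\bs\Sl_3(\R)$, which has dimension $5$: a maximal flat $F\sbs X$ has dimension $2$, and a sub-symmetric space $H_\de$ for $\Sl_2(\R)\ti\R$ has codimension $2$, so (cf.\ the $\Sl_n$ discussion in \S\ref{sec:interpret}) flats pair against the cycles $[\bar H_\de]\in H^{\text{cl}}_3(X/\Ga)$. This yields, for each $N\ge1$, a torsion-free finite-index $\Ga<\Sl_3(\Z)$ together with compact oriented flats $\bar F_1,\dots,\bar F_N\sbs\Ga\bs X$ whose classes are linearly independent in $H_2(\Ga\bs X;\Q)\cong H_2(B\Ga;\Q)$; equivalently, one can simply quote Avramidi--Nguyen-Phan \cite[Thm.\ 1.2]{avramidi-phan}, whose method \S\ref{sec:arrange} mirrors. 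For (b), commensurability supplies finite-index subgroups $A<\pi_0\Diff(M)$ and $B<\Sl_3(\Z)$ with an isomorphism $\vp\colon A\xra{\sim}B$; after replacing $\Ga$ by $\Ga\cap B$ (which does not decrease $\dim H_2(-;\Q)$, by transfer) and setting $\Ga'=\vp^{-1}(\Ga)<\pi_0\Diff(M)$, I obtain a finite-index subgroup with $H_2(B\Ga';\Q)\cong H_2(B\Ga;\Q)$, hence of dimension $\ge N$.

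Step (c) is the point of the statement. Writing $\Diff^{\Ga'}(M)$ for the preimage of $\Ga'$ under $\Diff(M)\ra\pi_0\Diff(M)$, the group extension $1\to\Diff_0(M)\to\Diff^{\Ga'}(M)\to\Ga'\to1$ gives a fibration $B\Diff_0(M)\to B\Diff^{\Ga'}(M)\to B\Ga'$. Since $\Diff_0(M)$ is connected, $\pi_1(B\Diff_0(M))\cong\pi_0(\Diff_0(M))$ is trivial, so $H_1(B\Diff_0(M);\Q)=0$. In the rational homology Serre spectral sequence the bottom-row term $E^2_{2,0}=H_2(B\Ga';\Q)$ receives no incoming differential, while its only outgoing differential $d_2\colon E^2_{2,0}\to E^2_{0,1}$ lands in $H_1(B\Diff_0(M);\Q)_{\Ga'}=0$; hence $E^\infty_{2,0}=E^2_{2,0}$, so the edge homomorphism $H_2(B\Diff^{\Ga'}(M);\Q)\to H_2(B\Ga';\Q)$ is surjective. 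Therefore $\dim H_2(B\Diff^{\Ga'}(M);\Q)\ge\dim H_2(B\Ga';\Q)\ge N$, and identifying $\Ga'$ with a finite-index subgroup $\Ga<\Sl_3(\Z)$ via $\vp$ completes the argument. The main content lies in (a) --- in particular in checking that the sign computation at the end of \S\ref{sec:ddagger}, specialized to $\Sl_3$, does not obstruct the construction, which is contained in \cite{avramidi-phan} --- together with the observation in (c) that characteristic classes pulled back from $B\pi_0\Diff(M)$ are automatically detected on $B\Diff^{\Ga'}(M)$, a flat-cycle phenomenon in the spirit of the Mather--Thurston theorem; step (b) is routine bookkeeping.
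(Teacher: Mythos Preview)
Your proof is correct, but step (c) takes a different and in fact more elementary route than the paper. The paper argues via the five-term exact sequence for the \emph{discrete} group extension $1\to\Diff_0(M)\to\Diff^\Ga(M)\to\Mod^\Ga(M)\to1$, invoking the Mather--Thurston theorem that $\Diff_0(M)$ is simple (hence perfect) to kill $H_1(\Diff_0(M);\Q)=\Diff_0(M)^{\mathrm{ab}}\otimes\Q$, and then uses the factorisation $B\Diff^\Ga(M)^\de\to B\Diff^\Ga(M)\to B\Mod^\Ga(M)$ to conclude for the topological classifying space. Your argument bypasses Mather--Thurston entirely: you work directly with the Serre spectral sequence of the fibration $B\Diff_0(M)\to B\Diff^{\Ga'}(M)\to B\Ga'$ and use only the trivial fact that $\Diff_0(M)$ is connected, so $B\Diff_0(M)$ is simply connected and $H_1(B\Diff_0(M);\Q)=0$.

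What each approach buys: your argument is cleaner for the statement as written, but the paper's route yields the stronger conclusion (noted there as a remark) that the corollary holds with $B\Diff^\Ga(M)^\de$ in place of $B\Diff^\Ga(M)$. Your closing remark that step (c) is ``in the spirit of the Mather--Thurston theorem'' is therefore a misattribution: your argument does not use it, and this is precisely the point of divergence from the paper. Steps (a) and (b) are fine; (a) is exactly the Avramidi--Nguyen-Phan input the paper cites, and your transfer argument in (b) is the standard way to pass to a common finite-index subgroup.
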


In fact, as we will see in the proof, Corollary \ref{cor:H2} remains true if $B\Diff^\Ga(M)$ is replaced by the classifying space $B\Diff^\Ga(M)^\de$ of $\Diff^\Ga(M)$ with the discrete topology. This is a stronger conclusion than in Corollary \ref{cor:main}. 

For manifolds satisfying the hypothesis of Corollary \ref{cor:H2} on could consider $M=\#_3\big(S^k\ti S^{k+1}\big)$ for $k\ge4$, c.f.\ \cite[Thm.\ 13.3]{sullivan-infinitesimal}. 

Corollary \ref{cor:H2} follows by applying the following two theorems. The first theorem, due to Avramidi--Nguyen-Phan \cite{avramidi-phan}, is analogous to Theorem \ref{thm:cycle}. For a prime $p$ and $\ell\ge1$, denote the congruence subgroup $\ker\big[\Sl_d(\Z)\ra\Sl_d(\Z/p^\ell\Z)\big]$ by $\Ga_d(p^\ell)$. 

\begin{thm}[Avramidi--Nguyen-Phan]\label{thm:avramidi-phan}
Given a prime $p$ and an integer $n\ge1$, there exists $\ell_0>0$ so that if $\ell>\ell_0$, then $\dim H_{d-1}(\Ga_d(p^\ell);\Q)\ge n$. 
\end{thm}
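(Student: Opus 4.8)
The plan is to run the geometric-cycle machinery of \S\ref{sec:geocycle} exactly as in \S\ref{sec:thm1}, but with $G=\Sl_n$ in place of $\SO(\Lam)$; this is the argument of Avramidi--Nguyen-Phan \cite{avramidi-phan}. Take $K=\SO(n)$ and $X=\Sl_n(\R)/K$, so $\dim X=\tfrac12(n^2+n-2)$. We want cycles of dimension $n-1$ in order to detect $H_{n-1}$, so the complementary dimension is $\tfrac12 n(n-1)$. For $X_1$ I would use a maximal flat $\bb E^{n-1}\sbs X$ whose quotient downstairs is \emph{compact}: fix a totally real degree-$n$ field $E/\Q$ and let $G_1<\Sl_n$ be the maximal torus acting on $E\simeq\Q^n$ by multiplication; by Dirichlet's unit theorem the norm-one units of an order of $E$ form a cocompact $\Z^{n-1}$ in $G_1(\R)^0\simeq(\R_{>0})^{n-1}$, so the associated $Y_1$ is a compact $(n-1)$-torus. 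For $X_2$ I would use the sub-symmetric space $H_\de\sbs X$ attached, as in \S\ref{sec:interpret}, to a rational splitting $\R^n=P\op L$ with $\dim L=1$; its stabilizer $G_2$ is a connected reductive $\Q$-subgroup with $G_2(\R)$ isogenous to $\Sl_{n-1}(\R)\ti\R^\ti$, and $H_\de$ has dimension $\tfrac12 n(n-1)$ and codimension $n-1$ in $X$, as required.

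Next I would verify ($\dagger$) and ($\ddagger$). For a generic rational choice of $(P,L)$ the pair $(G_1,G_2)$ is in general position in the sense analogous to \S\ref{sec:dagger} (no eigenline of $G_1$ meets $P$, and $L$ is spanned by a vector with nonzero component along every eigenline of $G_1$); a short argument as in \S\ref{sec:dagger} then shows $G_1\cap G_2$ is finite, so over a torsion-free $\Ga$ the intersection $X_1\cap X_2$ is a single point, which one arranges to be transverse by placing both through the basepoint $o$ with complementary tangent spaces; together with compactness of $Y_1$ this gives ($\dagger$). For ($\ddagger$): Theorem~\ref{thm:schwermer} supplies a finite-index $\Ga$ over which $Y_1,Y_2$ are embedded, oriented, totally geodesic submanifolds, and Propositions~\ref{prop:complexproduct} and \ref{prop:realproduct}---which apply verbatim, $G,G_1,G_2$ being connected reductive $\Q$-groups in general position---show that after shrinking $\Ga$ further every $\ga\in I(\Ga)$ factors as $\ga=g_2g_1$ with $g_i\in G_i(\R)$. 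It remains to pin down the signs. As in Step~2 of \S\ref{sec:ddagger}, $\ep(\ga)$ is read off, via \eqref{eqn:ep}, from the action on $\mf p=\mf p_1\op\mf p_2$ (traceless symmetric matrices, with $\mf p_1$ the diagonal ones and $\mf p_2$ a generic conjugate of the block-diagonal ones) of an element $k\in K$ correcting for the components of $g_1$ and $g_2$: $G_1(\R)=(\R^\ti)^{n-1}$ has $2^{n-1}$ components but all act orientation-preservingly on the flat, while $G_2(\R)$ has two, and for the nontrivial one $k$ may be taken to be a product of two reflections, one in $P$ and one in $L$. One then checks that $\det(\pi\ci k)=+1$ on $\mf p_1$; in contrast to Theorem~\ref{thm:cycle} there is no parity obstruction, which traces to $K=\SO(n)$ acting on $\mf p$ by conjugation rather than by the adjoint action relevant for $\SO(p,q)$. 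I expect this sign computation, together with checking that it is stable under the perturbations and $K$-rotations used below, to be the main technical point.

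Finally, to obtain $\dim H_{n-1}\ge N$ I would repeat the arrangement argument of \S\ref{sec:arrange}. In the model $X\simeq\{\text{unit-volume inner products on }\R^n\}$, and with the $\Sl_n$-analog of the intersection criterion of Lemma~\ref{lem:disjoint-criterion}, build families $\{F_k\}_{k=1}^N$ of flats and $\{H_\ell\}_{\ell=1}^N$ of hyperplanes defined over $\R$ by rotating a fixed transverse pair $(F_0,H_0)$ through a small angle by an element of $\SO(n)$, so that the intersection matrix $(H_\ell\cdot F_k)$ is lower triangular with $1$'s on the diagonal, hence invertible. Approximate each $F_k$ by a rational flat with compact quotient---possible by Prasad--Raghunathan \cite{prasad-raghunathan}, as in \S\ref{sec:arrange}---and each $H_\ell$ by a rational hyperplane, preserving the (open, discrete) intersection pattern. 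Now pass to a congruence subgroup $\Ga_n(p^\ell)$ with $\ell\gg0$: Theorem~\ref{thm:schwermer} makes the images $\bar F_k,\bar H_\ell$ embedded and oriented; Remark~\ref{rmk:disjoint}, via Proposition~\ref{prop:realproduct}, forces the downstairs intersection matrix $(\bar H_\ell\cdot\bar F_k)$ to have the same support as upstairs; and the sign computation above makes its diagonal entries all $+1$. Hence $(\bar H_\ell\cdot\bar F_k)$ is invertible, so $[\bar F_1],\ld,[\bar F_N]$ are linearly independent in $H_{n-1}(\Ga_n(p^\ell);\Q)$, and $\ell_0$ may be taken to be the largest index used. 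The one point not inherited from Theorem~\ref{thm:cycle} is that all of these finite-index passages can be realized inside the fixed $p$-power tower $\{\Ga_n(p^\ell)\}_\ell$; this is carried out in \cite{avramidi-phan}, the mechanism being that each ``bad'' self-intersection or extra intersection to be eliminated is witnessed by a matrix lying outside the relevant rational subgroup, so that---as those subgroups are defined over $\Q$ and the flats and hyperplanes are rational---the corresponding double cosets miss $\Ga_n(p^\ell)$ once $\ell$ is large, since $\bigcap_\ell\Ga_n(p^\ell)=\{1\}$.
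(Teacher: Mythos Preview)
The paper does not prove this statement: Theorem~\ref{thm:avramidi-phan} is quoted from \cite{avramidi-phan} as a black box and used only as input to Corollary~\ref{cor:H2}. So there is no proof in the paper to compare against; what you have written is a reconstruction of the Avramidi--Nguyen-Phan argument in the language of \S\S\ref{sec:geocycle}--\ref{sec:thm1}, which is exactly what the paper says its own proof of Theorem~\ref{thm:cycle} is modeled on (see the introduction and \S\ref{sec:arrange}).

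Your outline is faithful to that strategy and the dimension bookkeeping is correct. Two places deserve more care. First, your sign discussion is not right as stated: in both the $\SO(p,q)$ and $\Sl_n$ cases the action of $K$ on $\mf p$ \emph{is} the adjoint (conjugation) action, so the sentence ``$K=\SO(n)$ acting on $\mf p$ by conjugation rather than by the adjoint action'' does not explain anything. The actual reason the parity obstruction disappears is structural: for $\Sl_n$ the relevant component issue for $G_2(\R)\simeq\GL_{n-1}(\R)$ is corrected by an element of $K\cap G_2(\R)$ of the form $\diag(-1,1,\ldots,1,-1)$, and the explicit computation of $\pi\circ k$ on the diagonal part of $\mf p$ (traceless symmetric matrices) gives $+1$ for every $n$; this is genuinely a different linear-algebra computation from the one in \S\ref{sec:ddagger}, and you should either carry it out or cite \cite{avramidi-phan} for it rather than assert it. Second, Proposition~\ref{prop:realproduct} as stated in the paper is specific to the $\SO(p,q)$ setup (its proof uses that $G_1\cap G_2$ is trivial via the ``general position'' argument of \S\ref{sec:dagger}); the $\Sl_n$ analogue requires its own verification that $G_1(\C)\cap G_2(\C)$ is trivial for a generic rational pair $(P,L)$, which is true but not literally covered by the paper. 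With those two points filled in, your sketch matches the approach of \cite{avramidi-phan}.
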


Like in Theorem \ref{thm:cycle}, their homology comes from maximal flats in the associated symmetric space. We will focus on the case $d=3$, which is special because we can use the following theorem.

\begin{thm}[Mather, Thurston \cite{thurston-diff}]
Let $M$ be a smooth closed manifold. The group $\Diff_0(M)$ of diffeomorphisms isotopic to the identity is a simple group. 
\end{thm}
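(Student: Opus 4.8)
The plan is to combine an abstract \emph{simplicity criterion} of Epstein, which shows that the commutator subgroup $[\Diff_0(M),\Diff_0(M)]$ is already simple, with the \emph{Mather--Thurston perfectness theorem}, which shows that this commutator subgroup is all of $\Diff_0(M)$. Both inputs rest on one preliminary fact, \emph{fragmentation}, so I would organize the argument into three steps.

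\emph{Step 1 (fragmentation).} First record that every $f\in\Diff_0(M)$ is a finite product $f=f_1\cdots f_k$ with each $f_i$ supported in an embedded open ball. Given an isotopy $(f_t)_{t\in[0,1]}$ from $\id$ to $f$, cover the track $\{(t,f_t(x))\}\subset[0,1]\times M$ by finitely many flow boxes, choose a time partition $0=t_0<\cdots<t_k=1$ adapted to the cover, and use the isotopy extension theorem to realize each $f_{t_i}\circ f_{t_{i-1}}^{-1}$ by a diffeomorphism supported in a coordinate ball. Since $M$ is connected, any two coordinate balls are interchanged by an element of $\Diff_0(M)$, so $\Diff_0(M)$ is generated by a single conjugacy class of ball subgroups $\Diff_c(B)_0\cong\Diff_c(\R^n)_0$.

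\emph{Step 2 (Epstein's criterion).} With fragmentation available, the hypotheses of Epstein's lemma (fragmentation together with a mild compatibility between the $\Diff_0(M)$-action and the basis of coordinate balls) are met, and the lemma yields that $[\Diff_0(M),\Diff_0(M)]$ is simple. The mechanism is the commutator identity: if $N$ is a nontrivial normal subgroup of $\Diff_0(M)$, pick $g\in N$ with $g\neq\id$ and a ball $B$ with $g(B)\cap B=\emptyset$; then $[g,h]=(ghg^{-1})h^{-1}\in N$ for every $h\in\Diff_c(B)_0$, and a standard iteration of such identities forces $N$ to contain $[\Diff_c(B)_0,\Diff_c(B)_0]$, and hence, after spreading over a cover of $M$ by balls and invoking Step 1, all of $[\Diff_0(M),\Diff_0(M)]$.

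\emph{Step 3 (perfectness).} It remains to see that $\Diff_0(M)=[\Diff_0(M),\Diff_0(M)]$, and by Step 1 it suffices to know that $\Diff_c(\R^n)_0$ is perfect. This is the Mather--Thurston theorem; in the $C^\infty$ category it is Thurston's theorem, obtained from the homological comparison of $B\Diff_c(\R^n)^\delta$ with a mapping space built from the Haefliger classifying space of codimension-$n$ foliations, whose high connectivity forces $H_1(\Diff_c(\R^n)_0)=0$. I expect this to be the main obstacle: it is the only genuinely nonformal ingredient (for finite regularity $C^r$ it is known only for $r\neq n+1$, the case $r=n+1$ being a famous open problem), whereas Steps 1 and 2 are soft isotopy-extension and commutator bookkeeping. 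Combining the three steps shows that the only normal subgroups of $\Diff_0(M)$ are $\{\id\}$ and $\Diff_0(M)$.
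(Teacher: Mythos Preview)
The paper does not prove this theorem at all: it is quoted as a known result of Mather and Thurston, with a citation to \cite{thurston-diff}, and is then used as a black box in the proof of Corollary~\ref{cor:H2} (to deduce that $H_1(\Diff_0(M))=0$). There is no argument to compare against.

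That said, your sketch is the standard route to the statement and is correct in outline. Fragmentation for $\Diff_0(M)$ on a closed $M$ is indeed a consequence of isotopy extension and a suitable cover; Epstein's axioms are satisfied, giving simplicity of the commutator subgroup; and the remaining input is Thurston's perfectness of $\Diff_c(\R^n)_0$ (equivalently, vanishing of $H_1$), which is exactly the nontrivial ingredient you flag. One small point of presentation: in Step~2, the commutator identity you write does not by itself show $N\supset[\Diff_c(B)_0,\Diff_c(B)_0]$; Epstein's argument uses a slightly more elaborate chain of commutators (displacing two disjoint balls and iterating) to force every commutator of ball-supported diffeomorphisms into $N$. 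With that refinement the argument goes through.
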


\begin{proof}[Proof of Corollary \ref{cor:H2}] 
Denote $\Mod(M):=\pi_0\Diff(M)$. The exact sequence 
\[1\ra\Diff_0(M)\ra\Diff(M)\ra\Mod(M)\ra1\] gives a 5-term exact sequence in group homology
\[H_2\big(\Diff(M)\big)\xra{\phi} H_2\big(\Mod(M)\big)\ra H_1\big(\Diff_0(M)\big)_{\Mod(M)}\ra H_1\big(\Diff(M)\big)\ra H_1\big(\Mod(M)\big)\ra0\]
By the Mather--Thurston theorem, $H_1\big(\Diff_0(M)\big)=\Diff_0(M)^{\text{ab}}=0$ so $\phi$ is surjective. Since the obvious composition $B\Diff(M)^\de\ra B\Diff(M)\ra B\Mod(M)$ is induced by the quotient $\Diff(M)\ra\Mod(M)$, the map $H_2\big(B\Diff(M)\big)\ra H_2 \big(B\Mod(M)\big)$ is also surjective. The same argument applies to $\Diff^\Ga(M)$. \end{proof}

One could go further and try to extend the computations of \cite{berglund-madsen-hs} to the manifolds $M=\#_g(S^k\ti S^{k+1})$ to show that the classes in Theorem \ref{thm:avramidi-phan} are in the image of $H_*(B\Diff^\Ga(M);\Q)\ra H_*(B\Ga;\Q)$ as in Theorem \ref{appendix:theorem}. For another example, one could look at the diffeomorphism groups of handlebodies, i.e.\ boundary-connected-sums of $D^{k+1}\ti S^k$ for $k\ge4$, c.f.\ \cite{botvinnik-perlmutter}.

\appendix

\renewcommand{\circled}[1]{\raisebox{.5pt}{\textcircled{\raisebox{-.4pt} {\footnotesize #1}}}}
\renewcommand{\id}{\ensuremath{\operatorname{id}}}
\renewcommand{\oO}{\ensuremath{\operatorname{O}}}
\newcommand{\oSO}{\ensuremath{\operatorname{SO}}}
\newcommand{\Der}{\operatorname{Der}}
\newcommand{\BGamma}{\mathrm{B\Gamma}}
\newcommand{\BG}{\mathrm{B}G}
\renewcommand{\ra}{\rightarrow}
\newcommand{\lra}{\longrightarrow}
\newcommand{\xlra}[1]{\overset{#1}{\longrightarrow}}
\newcommand{\interior}[1]{\ensuremath{\operatorname{int}(#1)}}
\newcommand{\bfL}{\ensuremath{\mathbf{L}}}
\newcommand{\bfQ}{\ensuremath{\mathbb{Q}}}
\newcommand{\bfZ}{\ensuremath{\mathbb{Z}}}
\newcommand{\bfC}{\ensuremath{\mathbb{C}}}
\newcommand{\bfR}{\ensuremath{\mathbb{R}}}
\newcommand{\oH}{\ensuremath{H}}
\newcommand{\oB}{\ensuremath{\mathrm{B}}}
\newcommand{\GL}{\operatorname{GL}}
\newcommand{\CE}{\mathrm{CE}}
\renewcommand{\Emb}{\ensuremath{\operatorname{Emb}}}
\renewcommand{\Diff}{\ensuremath{\operatorname{Diff}}}
\renewcommand{\BDiff}{\ensuremath{\operatorname{BDiff}}}
\newcommand{\BlockDiff}{\ensuremath{\operatorname{\widetilde{Diff}}}}
\newcommand{\BBlockDiff}{\ensuremath{\operatorname{B\widetilde{Diff}}}}
\newcommand{\BBlockDiffGamma}{\ensuremath{\operatorname{B\widetilde{Diff^\Gamma}}}}
\newcommand{\hAut}{\ensuremath{\operatorname{hAut}}}
\newcommand{\BhAut}{\ensuremath{\operatorname{BhAut}}}
\newcommand{\Diffo}{\ensuremath{\operatorname{Diff}^{\scaleobj{0.8}{+}}}}
\newcommand{\BDiffo}{\ensuremath{\operatorname{BDiff}^{\scaleobj{0.8}{+}}}}

\section{\for{toc}{Lifting cycles from arithmetic groups to diffeomorphism groups (by Manuel Krannich)}\except{toc}{Lifting cycles from arithmetic groups to diffeomorphism groups\\ \vspace{0.2cm} by Manuel Krannich}
} 

We denote by $\Diff(W_g)$ the topological group of orientation-preserving diffeomorphisms of the iterated connected sum $W_g=\sharp^g(S^n\times S^n)$ in the smooth topology. Fixing an embedded disc $D^{2n}\subset W_g$, we also consider the manifold $W_{g,1}=W_g\backslash \interior{D^{2n}}$ and its group of diffeomorphisms $\Diff_\partial (W_{g,1})$ fixing a neighborhood of the boundary pointwise, which is related to $\Diff(W_g)$ by a map $\Diff_\partial(W_{g,1})\ra\Diff(W_g)$ given by extending diffeomorphisms of $W_{g,1}\subset W_g$ via the identity. 
\subsection{The action on homology} The action of the group of diffeomorphisms $\Diff(W_g)$ on the middle homology $\oH_n(W_g)\cong\bfZ^{2g}$ preserves the unimodular hyperbolic $(-1)^n$-symmetric intersection form $\lambda\colon \oH_n(W_g)\otimes \oH_n(W_g)\ra\bfZ$ and thus gives rise to a map
\[\label{appendix:action}\pi_0\Diff(W_g)\lra\begin{cases}\Sp_{2g}(\bfZ)&n\text{ odd}\\\oO_{g,g}(\bfZ)&n\text{ even},\end{cases}\] whose image we denote by $G_g\le\GL_{2g}(\bfZ)$. If $n$ is even or $n=1,3,7$, this map is surjective and $G_g$ coincides with $\Sp_{2g}(\bfZ)$ or $\oO_{g,g}(\bfZ)$ depending on the parity of $n$, whereas the image $G_g\le\Sp_{2g}(\bfZ)$ for $n\neq1,3,7$ odd is the finite index subgroup $\Sp_{2g}^q(\bfZ)\le\Sp_{2g}(\bfZ)$ of matrices preserving the standard theta-characteristic (see e.g.\,\cite[Ex.\,5.5]{berglund-madsen-rational}). As the space of orientation-preserving embeddings $\Emb(D^{2n},W_g)$ is connected, the map $\pi_0\Diff_\partial(W_{g,1})\ra\pi_0\Diff(W_g)$ is surjective, so the images of these two groups in $\GL_{2g}(\bfZ)$ agree. Given a subgroup $\Gamma\le G_g$, we denote by $\Diff^{\Gamma}(W_g)\le \Diff(W_g)$ and $\Diff_\partial^{\Gamma}(W_{g,1})\le \Diff_\partial(W_{g,1})$ the preimages of $\Gamma$ with respect to the canonical maps to $G_g$.

The primary goal of this appendix is to prove the following.

\begin{thm}\label{appendix:theorem}Let $2n\ge6$ and $\Gamma\le G_g$ a finite index subgroup. The natural map
\[\oH^*(\BGamma;\bfQ)\lra\oH^*(\BDiff^{\Gamma}_\partial(W_{g,1});\bfQ)\] is injective in degrees
\[*\le 
\begin{cases}
2&\text{for }2n=6\\
n&\text{for }2n>6\text{ and }g< 4-c\\
n+\min(n-4,g-1+c)&\text{otherwise},
\end{cases}\]
where $c=0$ if is $n$ even and $c=1$ if is $n$ odd.
\end{thm}

\begin{nrem}Since the action of $\Diff^\Gamma_\partial(W_{g,1})$ on $\oH_n(W_g)$ factors through $\Diff^{\Gamma}(W_g)$, the same conclusion holds for $\BDiff^{\Gamma}(W_g)$ instead of $\BDiff^\Gamma_\partial(W_{g,1})$.
\end{nrem}

\subsection{Stable and unstable cohomology}The block-inclusion $\GL_{2g}(\bfZ)\subset\GL_{2g+2}(\bfZ)$ is covered by a map $\Diff_\partial(W_{g,1})\ra\Diff_\partial(W_{g+1,1})$ given by extending diffeomorphisms via the identity, so $\GL_{2g}(\bfZ)\subset\GL_{2g+2}(\bfZ)$ restricts to an inclusion $G_g\subset G_{g+1}$ and we obtain a map $\BDiff_{\partial}(W_{\infty,1})\ra\BG_\infty$ by taking (homotopy) colimits of $\BDiff_{\partial}(W_{g,1})\ra\BG_g$ in $g$. By work of Borel, Galatius--Randal-Williams, and Madsen--Weiss  \cite{borel_cohoarith,borel_cohoarith2, GRW-stable-moduli, MadsenWeiss}, the cohomology rings of $\BG_\infty$ and $\BDiff_{\partial}(W_{\infty,1})$ is a polynomial algebra concentrated in even degrees. Moreover, there are natural choices of polynomial generators for these rings with respect to which the induced map $\oH^*(\BG_\infty;\bfQ)\ra\oH^*(\BDiff_{\partial}(W_{\infty,1});\bfQ)$ corresponds to an inclusion of generators; this can for instance be seen by an index-theoretic argument (see e.g.\,\cite[Sect.\,2.4]{EbertRW}). Given a subgroup $\Gamma\le G_g$ of finite index, we have a commutative square
\begin{center}
\begin{tikzcd}
\oH^*(\BG_\infty;\bfQ)\arrow[r]\arrow[d]&\oH^*(\BGamma;\bfQ)\arrow[d]\\
\oH^*(\BDiff_{\partial}(W_{\infty,1});\bfQ)\arrow[r]&\oH^*(\BDiff^\Gamma_\partial(W_{g,1});\bfQ),
\end{tikzcd}
\end{center}
whose upper horizontal arrow is an isomorphism in a range of degrees growing with $g$ by a result of Borel \cite{borel_cohoarith,borel_cohoarith2}. In light of work of Harer and Galatius--Randal-Williams \cite{Harer,GRW-homological-stability-1}, the same holds for the lower horizontal morphism if $\Gamma=G_g$ and $2n\neq4$.\footnote{Recent work of Kupers--Randal-Williams \cite{KupersRandalWilliams} shows that, this holds for any finite index subgroup $\Gamma\le G_g$ as long as $2n\ge6$ and $\Gamma$ is not completely contained in the subgroup $\oSO_{g,g}(\bfZ)\subset\oO_{g,g}(\bfZ)$ if $n$ is even.} Moreover, the proof of \cref{appendix:theorem} will make apparent that the vertical arrows in the diagram are (compatibly) split injective in a range increasing with $n$ and any finite index subgroup $\Gamma\le G_g$ if $2n\ge 6$. As a result, the cokernel of the upper horizontal map---the so-called \emph{unstable cohomology} of $\BGamma$---injects in this range of degrees into the cokernel of the lower horizontal map and thus provides a source for unstable cohomology of $\BDiff^\Gamma_\partial(W_{g,1})$.

When varying $\Gamma\le G_g$ over finite index subgroups, the rational cohomology of $\BGamma$ in degree $g$ is arbitrarily large for $n$ even and $g$ odd; this is the main result of the body of this work (see \cref{thm:cycle}). For the full group $\Gamma=G_g$ on the other hand, little is known about the unstable cohomology, aside from some scattered classes: for instance, computations of Hain \cite{Hain3folds} and Hulek--Tomassi \cite{HulekTomassiVoronoi} show that for $n$ odd, there is a nontrivial unstable class in $\oH^6(\BG_3;\bfQ)$ and one in $\oH^{12}(\BG_4;\bfQ)$. By the above discussion, these classes remain nontrivial (and unstable) when pulled back to $\BDiff_\partial(W_{g,1})$ as long as $n$ is sufficiently large, so we obtain the following corollary.

\begin{cor}\label{appendix:corollary:unstableclasses}
For $n$ odd, the cokernel of the natural morphism
\[\oH^i(\BDiff_\partial(W_{\infty,1});\bfQ)\lra\oH^i(\BDiff_\partial(W_{g,1});\bfQ)\] is nontrivial for $(i,g)=(6,3)$ as long as $n>5$, and for $(i,g)=(12,4)$ if $n>8$.
\end{cor}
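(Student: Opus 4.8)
The plan is to obtain the corollary formally from \cref{appendix:theorem} --- more precisely, from the compatible splittings produced in its proof --- combined with the unstable classes of $\BG_g$ recorded just above.

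First I would isolate the mechanism. Specializing the commutative square preceding the corollary to $\Gamma=G_g$, one gets a square relating $\oH^{*}(\BG_\infty;\bfQ)$, $\oH^{*}(\BG_g;\bfQ)$, $\oH^{*}(\BDiff_\partial(W_{\infty,1});\bfQ)$ and $\oH^{*}(\BDiff_\partial(W_{g,1});\bfQ)$, whose vertical maps (by the remark following \cref{appendix:theorem}) admit retractions $r_\infty$, $r_g$ in degrees $*<n$ that are compatible with the horizontal stabilisation maps. I would then chase this square: if $x\in\oH^{*}(\BG_g;\bfQ)$ with $*<n$ is not in the image of the stabilisation map from $\oH^{*}(\BG_\infty;\bfQ)$, but its image in $\oH^{*}(\BDiff_\partial(W_{g,1});\bfQ)$ were the stabilisation of some $y\in\oH^{*}(\BDiff_\partial(W_{\infty,1});\bfQ)$, then applying $r_g$ and using compatibility of the retractions would express $x$ as the stabilisation of $r_\infty(y)$, a contradiction. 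Hence the map of \cref{appendix:theorem} carries every unstable class of $\BG_g$ in degree $*<n$ to a nonzero element of the cokernel of $\oH^{*}(\BDiff_\partial(W_{\infty,1});\bfQ)\to\oH^{*}(\BDiff_\partial(W_{g,1});\bfQ)$.

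Next I would supply the unstable classes. For $n$ odd, $G_g$ equals $\Sp_{2g}(\bfZ)$ if $n=1,3,7$ and the theta subgroup $\Sp_{2g}^q(\bfZ)\le\Sp_{2g}(\bfZ)$ otherwise. When $G_g=\Sp_{2g}(\bfZ)$, Hain's computation \cite{Hain3folds} (for $g=3$ in degree $6$) and that of Hulek--Tommasi \cite{HulekTomassiVoronoi} (for $g=4$ in degree $12$) directly produce a class in $\oH^6(\BG_3;\bfQ)$, respectively $\oH^{12}(\BG_4;\bfQ)$, which is not pulled back from the stable cohomology. When $G_g$ is the theta subgroup I would restrict these classes along the finite-index inclusion $\Sp_{2g}^q(\bfZ)\le\Sp_{2g}(\bfZ)$: restriction is rationally split by the transfer and hence injective, and since Borel's theorem identifies $\oH^{*}(B\Sp_\infty(\bfZ);\bfQ)$ with $\oH^{*}(B\Sp_\infty^q(\bfZ);\bfQ)$ compatibly with restriction, a short diagram chase shows the restriction of an unstable class is again unstable. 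This is precisely the assertion, made just before the corollary, that for every odd $n$ there is an unstable class in $\oH^6(\BG_3;\bfQ)$, respectively $\oH^{12}(\BG_4;\bfQ)$.

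Finally I would match the numerics: the splitting used above is only available in degrees $*<n$, so $(i,g)=(6,3)$ forces $6<n$, i.e.\ $n>5$ since $n$ is odd, and $(i,g)=(12,4)$ forces $12<n$, i.e.\ $n>11$; in both ranges $2n\ge 6$ holds automatically, so \cref{appendix:theorem} is in force and combining the two previous paragraphs yields the claim. I do not anticipate a genuine obstacle here, since all of the real content is in \cref{appendix:theorem}, which I am assuming; the only step requiring a little care is the transfer argument guaranteeing that Hain's and Hulek--Tommasi's classes remain unstable after restriction to the theta subgroup.
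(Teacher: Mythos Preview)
Your proposal is correct and follows essentially the same approach as the paper: the corollary is derived in the paper from the discussion immediately preceding it, using exactly the compatible splittings in degrees $*<n$ to inject the unstable cohomology of $\BG_g$ into the cokernel in question, together with the Hain and Hulek--Tommasi classes. Your explicit transfer argument for the theta subgroup $\Sp_{2g}^q(\bfZ)\le\Sp_{2g}(\bfZ)$ is a point the paper leaves implicit, so your write-up is slightly more careful there, but otherwise the two arguments coincide.
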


\begin{nrem}To the knowledge of the author, these are the first known unstable rational cohomology classes of $\BDiff_\partial(W_{g,1})$, aside from the case $2n=2$ of surfaces.
\end{nrem}

\subsection{The work of Berglund--Madsen}The proof of \cref{appendix:theorem} crucially relies on work of Berglund and Madsen \cite{berglund-madsen-rational}, who used a combination of classical surgery theory and rational homotopy theory to construct rational models for the classifying spaces $\BhAut^{\id}_\partial(W_{g,1})$ and $\operatorname{B\widetilde{Diff}}_{\partial,\circ}(W_{g,1})$ of the spaces of homotopy automorphisms and block diffeomorphisms homotopic to the identity.  Using these models, they proved that the rational cohomology rings of the classifying spaces of the full automorphism spaces $\hAut_\partial(W_{g,1})$ and $\BlockDiff_\partial(W_{g,1})$ are independent of $g$ in a range of degrees and studied the rational cohomology in this \emph{stable} range. As explained above, \cref{appendix:theorem} yields some information on $\oH^*(\BDiff_\partial(W_{g,1});\bfQ)$ in the \emph{unstable} range. Its proof involves relating $\oH^*(\BDiff_\partial(W_{g,1});\bfQ)$ to $\oH^*(\BBlockDiff_\partial(W_{g,1});\bfQ)$ by combining \cite{berglund-madsen-rational} with Morlet's lemma of disjunction as in \cite{UpperBound}, extending some arguments in \cite{berglund-madsen-rational} for spaces of automorphisms homotopic to the identity to the full automorphism spaces, in particular to show that the cohomology ring $\oH^*(\BhAut_\partial(W_{g,1});\bfQ)$ injects into $\oH^*(\BBlockDiff_\partial(W_{g,1});\bfQ)$ also in the unstable range (it is even a retract), and carrying out a spectral sequence argument involving low-degree computations of $\BhAut_\partial(W_{g,1})$ and some facts from the theory of arithmetic groups. 

\subsection{The proof of \cref{appendix:theorem}}We divide the proof of \cref{appendix:theorem} into three steps corresponding to three maps in a factorisation 
\begin{equation}\label{appendix:composition}
\BDiff^\Gamma_\partial (W_{g,1})\xlra{\circled{$1$}}\BBlockDiff^\Gamma_\partial (W_{g,1})\xlra{\circled{$2$}} \BhAut^{\cong,\Gamma}_\partial (W_{g,1})\xlra{\circled{$3$}} \BGamma,
\end{equation} 
which we explain in the following. Up to canonical equivalences, the topological group of block diffeomorphisms fixing a neighborhood of the boundary $\BlockDiff_\partial(W_{g,1})$ fits between $\Diff_\partial(W_{g,1})$ and the topological monoid of homotopy automorphisms fixing the boundary, so there are natural maps
$\Diff_\partial(W_{g,1})\ra\BlockDiff_\partial(W_{g,1})\ra\hAut_\partial(W_{g,1})$ (see e.g.\,\cite[Sect.\,4]{berglund-madsen-rational}), which explain the maps $\circled{$1$}$ and $\circled{$2$}$ for $\Gamma=G_g$, denoting by $\hAut^{\cong}_\partial (W_{g,1})\subset \hAut_\partial(W_{g,1})$ the union of components in the image of the map $\BlockDiff_\partial(W_{g,1})\ra\hAut_\partial(W_{g,1})$. By definition of the block-diffeomorphism group, the map $\Diff_\partial(W_{g,1})\ra\BlockDiff_\partial(W_{g,1})$ is surjective on path components (in fact, it is an isomorphism by Cerf's ``concordance implies isotopy"), so in particular the image of $\BlockDiff_\partial (W_{g,1})$ in $\GL_{2g}(\bfZ)$ coincides with the image $G_g$ of $\Diff_\partial(W_{g,1})$, which explains the map \circled{$3$} for $\Gamma=G_g$. For a general subgroup $\Gamma\le G_g$, this composition is defined analogously by restricting the components of the automorphism spaces involved to the preimage of $\Gamma$ of the canonical maps to $G_g$. 

In what follows, we examine the quality of the maps \circled{$1$}--\circled{$3$} in rational cohomology.

\subsection*{\circled{$1$}}Extending (block) diffeomorphisms of an embedded disc $D^{2n}\subset W_{g,1}$ to all of $W_{g,1}$ by the identity induces a commutative square
\begin{center}
\begin{tikzcd}
\BDiff_\partial(D^{2n})\arrow[r]\arrow[d]&\BDiff_\partial(W_{g,1})\arrow[d]\\
\BBlockDiff_\partial(D^{2n})\arrow[r]&\BBlockDiff_\partial(W_{g,1})
\end{tikzcd}
\end{center} whose induced map on vertical homotopy fibres
\[\BlockDiff_\partial(D^{2n})/\Diff_{\partial}(D^{2n})\lra\BlockDiff_\partial(W_{g,1})/\Diff_{\partial}(W_{g,1})\] is $(2n-4)$-connected by an application of Morlet's lemma of disjunction. For $g\gg0$, this can be combined with Berglund--Madsen's work \cite{berglund-madsen-rational} to conclude that $\BlockDiff_\partial(D^{2n})/\Diff_{\partial}(D^{2n})$ is $(2n-5)$-connected, as observed by Randal-Williams \cite[Sect.\,4]{UpperBound}. This in turn implies that $\BlockDiff_\partial(W_{g,1})/\Diff_{\partial}(W_{g,1})$ has no rational cohomology in degrees $*< 2n-4$ for \emph{all} $g\ge0$, which we combine with the homotopy pullback
\begin{center}
\begin{tikzcd}
\BDiff^\Gamma_\partial (W_{g,1})\rar\dar&\BDiff_\partial (W_{g,1})\dar\\
\BBlockDiff^\Gamma_\partial (W_{g,1})\rar&\BBlockDiff_\partial (W_{g,1})
\end{tikzcd}
\end{center}
for a fixed subgroup $\Gamma\le G_g$ to conclude the following.

\begin{prop}\label{appendix:prop1}For $2n\ge6$ and a subgroup $\Gamma\le G_g$, the induced map \[\oH^*(\BBlockDiff^\Gamma_\partial (W_{g,1}),\bfQ)\lra\oH^*(\BDiff^\Gamma_\partial (W_{g,1});\bfQ)\] is an isomorphism for $*< 2n-4$ and a monomorphism for $*=2n-4$.
\end{prop}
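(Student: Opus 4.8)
The plan is to read the statement off the Serre spectral sequence of the homotopy fibre sequence
\[
\BlockDiff_\partial(W_{g,1})/\Diff_\partial(W_{g,1}) \lra \BDiff_\partial(W_{g,1}) \lra \BBlockDiff_\partial(W_{g,1}),
\]
where the fibre is the homotopy fibre of the map $\circled{$1$}$. If this fibre has vanishing reduced rational cohomology in degrees $*\le 2n-5$, then the spectral sequence degenerates enough in low total degree: the first contribution off the bottom row sits in total degree $2n-4$, and no differential can hit the bottom row in total degrees $\le 2n-4$ for dimension reasons. Hence the edge homomorphism $\oH^*(\BBlockDiff_\partial(W_{g,1});\bfQ)\to\oH^*(\BDiff_\partial(W_{g,1});\bfQ)$ is an isomorphism for $*\le 2n-5$ and a monomorphism for $*=2n-4$, which is exactly the assertion. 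So everything is reduced to a connectivity estimate for the fibre.

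To get that estimate I would transport the problem to the disc. Extending (block) diffeomorphisms of an embedded $D^{2n}\subset W_{g,1}$ by the identity produces the commutative square displayed above, whose induced map of vertical homotopy fibres is
\[
\BlockDiff_\partial(D^{2n})/\Diff_\partial(D^{2n}) \lra \BlockDiff_\partial(W_{g,1})/\Diff_\partial(W_{g,1}),
\]
and Morlet's lemma of disjunction (in the form used in \cite[Sect.\,4]{UpperBound}) shows that this comparison map is $(2n-4)$-connected, for \emph{every} $g$. Thus it suffices to show the disc term $\BlockDiff_\partial(D^{2n})/\Diff_\partial(D^{2n})$ is $(2n-5)$-connected; since this space is independent of $g$, it is enough to establish this in the stable range, and then feeding it back through the $(2n-4)$-connected comparison map shows that $\BlockDiff_\partial(W_{g,1})/\Diff_\partial(W_{g,1})$ is $(2n-5)$-connected, hence rationally $(2n-5)$-connected, for all $g\ge 0$, as needed in the first paragraph.

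The connectivity of $\BlockDiff_\partial(D^{2n})/\Diff_\partial(D^{2n})$ is where I would invoke the work of Berglund--Madsen \cite{berglund-madsen-rational}: in the stable range $g\gg 0$ their rational models for $\BBlockDiff_\partial(W_{g,1})$, combined with what is known stably about $\BDiff_\partial(W_{g,1})$, pin down the low-degree homotopy of $\BlockDiff_\partial(W_{g,1})/\Diff_\partial(W_{g,1})$, and together with the Morlet comparison this yields the $(2n-5)$-connectivity of the disc term. This is precisely the observation of Randal-Williams in \cite[Sect.\,4]{UpperBound}, which I would simply quote.

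\textbf{Main obstacle.} The formal part — the spectral sequence bookkeeping and the Morlet comparison — is routine. The real content is the sharp $(2n-5)$-connectivity of $\BlockDiff_\partial(D^{2n})/\Diff_\partial(D^{2n})$: Morlet's disjunction on its own only delivers $(2n-4)$-connectivity of the \emph{comparison} map, and extracting the final unit of connectivity genuinely requires the rational homotopy-theoretic input of Berglund--Madsen in the stable range. That is the step I expect to be delicate, and for it I would lean entirely on \cite{berglund-madsen-rational} and \cite{UpperBound}.
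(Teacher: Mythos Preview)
Your proposal is correct and follows essentially the same route as the paper: reduce via the Serre spectral sequence to the rational acyclicity of $\BlockDiff_\partial(W_{g,1})/\Diff_\partial(W_{g,1})$ in degrees $\le 2n-5$, compare to the disc via Morlet's lemma of disjunction, and invoke the $(2n-5)$-connectivity of $\BlockDiff_\partial(D^{2n})/\Diff_\partial(D^{2n})$ from \cite[Sect.\,4]{UpperBound} (which in turn rests on \cite{berglund-madsen-rational} in the stable range). The only difference is cosmetic: the paper leaves the spectral sequence step implicit, whereas you spell it out.
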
 

\subsection*{\circled{$2$}}Our discussion of the second map in the composition \eqref{appendix:composition} is not specific to the manifold $W_{g,1}$, so we phrase it more general.

\begin{prop}\label{appendix:prop2}
For a compact, simply-connected, stably parallelisable manifold $M$ of dimension $d\ge 5$ with sphere boundary $\partial M\cong S^{d-1}$, the natural map
\[\BBlockDiff_\partial(M)\lra \BhAut^{\cong}_\partial(M)\] is split injective on rational cohomology rings.\end{prop}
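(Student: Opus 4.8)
The plan is to identify the homotopy fibre of the map $\circled{$2$}$, which I abbreviate $q\colon\BBlockDiff_\partial(M)\to\BhAut^{\cong}_\partial(M)$, with a block structure space, to show that rationally this structure space is an infinite loop space on which the monodromy acts through infinite loop maps, and to extract from the fibrewise zero a rational section of $q$, which then supplies the desired ring retraction. For the first step, I would recall that the theory of surgery on manifolds of dimension $d\ge 5$ (Quinn, Ranicki; in the form used in \cite[Sect.\,4]{berglund-madsen-rational}) exhibits $q$ as a fibration with fibre the smooth block structure space $\widetilde{\mathcal S}^{\Diff}_\partial(M)$ of $M$ rel $\partial$; restricting to the components $\hAut^{\cong}_\partial(M)$ actually hit by $\BlockDiff_\partial(M)$ is precisely what makes this fibre non-empty. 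Equivalently $\BBlockDiff_\partial(M)\simeq E\hAut^{\cong}_\partial(M)\times_{\hAut^{\cong}_\partial(M)}\widetilde{\mathcal S}^{\Diff}_\partial(M)$ for the usual surgery action $\varphi\cdot(N,f)=(N,\varphi\circ f)$, so a section of $q$ is the same thing as a homotopy fixed point of this action, i.e.\ a point of $\big(\widetilde{\mathcal S}^{\Diff}_\partial(M)\big)^{h\hAut^{\cong}_\partial(M)}$.

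Next I would analyse the fibre and the monodromy rationally. Because $M$ is simply connected, stably parallelisable and $d\ge 5$, the smooth surgery exact sequence --- using that $\Top/\mathrm O$ is rationally trivial (so $G/\mathrm O\simeq_\bfQ G/\Top$) and that $\pi_1M=1$ (so the $\mathbf L$-theory of $\bfZ[\pi_1M]$ is just $\mathbf L_\bullet(\bfZ)$) --- identifies $\widetilde{\mathcal S}^{\Diff}_\partial(M)$, after rationalisation, with $\Omega^\infty$ of the homotopy fibre $\mathbb S$ of the $\mathbf L$-theory assembly map $\mathbb H_\bullet(M/\partial M;\mathbf L_\bullet)\to\mathbf L_\bullet(\bfZ)$. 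In particular $\oH^*(\widetilde{\mathcal S}^{\Diff}_\partial(M);\bfQ)$ is a free graded-commutative algebra. Moreover, on this model the surgery action is visibly by infinite loop maps: $\varphi$ acts on $\mathbb H_\bullet(M/\partial M;\mathbf L_\bullet)$ through the map of spectra induced by $\varphi\colon(M,\partial M)\to(M,\partial M)$, and trivially on $\mathbf L_\bullet(\bfZ)$ because $\pi_1M=1$, so naturality of assembly makes $\varphi$ act on $\mathbb S$ by a (rational) map of spectra.

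Hence, rationally, $\hAut^{\cong}_\partial(M)$ acts on $\widetilde{\mathcal S}^{\Diff}_\partial(M)\simeq_\bfQ\Omega^\infty\mathbb S$ by infinite loop maps, and since $\Omega^\infty$ commutes with homotopy fixed points one obtains
\[
\big(\widetilde{\mathcal S}^{\Diff}_\partial(M)\big)^{h\hAut^{\cong}_\partial(M)}\ \simeq_\bfQ\ \Omega^\infty\big(\mathbb S^{h\hAut^{\cong}_\partial(M)}\big),
\]
which is non-empty: it contains the canonical point coming from $0\in\mathbb S$, namely the class of the identity structure $[\id_M]$, which is fixed because infinite loop maps preserve the zero. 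This gives a rational section $s$ of $q$, and pulling cohomology back along $s$ yields a ring homomorphism $r\colon\oH^*(\BBlockDiff_\partial(M);\bfQ)\to\oH^*(\BhAut^{\cong}_\partial(M);\bfQ)$ with $r\circ q^*=\id$. Thus $q^*$ is a split monomorphism of graded rings and $\oH^*(\BhAut^{\cong}_\partial(M);\bfQ)$ is a retract of $\oH^*(\BBlockDiff_\partial(M);\bfQ)$, which is the assertion.

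The hard part will be the bookkeeping behind these steps, none of which is individually deep. One has to (i) set up the surgery fibration with the correct decorations while keeping careful track of components (the role of $\hAut^{\cong}$ versus $\hAut$, and identity-component issues); (ii) choose a model of the block structure space --- Ranicki's algebraic theory of surgery, or the assembly description used in \cite[Sect.\,4]{berglund-madsen-rational} --- that is demonstrably functorial \emph{as a spectrum} under $\hAut^{\cong}_\partial(M)$, and check that this identification intertwines the surgery action, which also requires comparing the smooth and topological block structure spaces rationally (here stable parallelisability of $M$ and rational triviality of $\Top/\mathrm O$ are used); and (iii) justify the rational homotopy-fixed-point computation above, e.g.\ by fibrewise rationalisation of the fibre, which leaves $\oH^*(-;\bfQ)$ of the total space unchanged. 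Step (ii) --- the $\hAut^{\cong}_\partial(M)$-equivariance of the $\mathbf L$-theoretic model --- is the delicate point, and is precisely the extension of the identity-component arguments of \cite{berglund-madsen-rational} alluded to in the introduction to this appendix.
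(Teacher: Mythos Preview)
Your approach is correct in outline but takes a considerably more elaborate route than the paper's. Instead of analysing the structure-space fibre directly, the paper factors the map $q$ through the classifying space $\BhAut^{*,\cong}_\partial(\vartheta)$ of homotopy automorphisms of $M$ covered by a bundle map of the stable normal bundle $\vartheta$ (fixing the fibre over a basepoint). Berglund--Madsen's Corollary~4.13 makes $\BBlockDiff_\partial(M)\to\BhAut^{*,\cong}_\partial(\vartheta)$ a rational homology equivalence (by comparing the two fibre sequences over $\oB\pi_0\hAut^{\cong}_\partial(M)$), and then the forgetful map $\BhAut^{*,\cong}_\partial(\vartheta)\to\BhAut^{\cong}_\partial(M)$ admits an \emph{honest} section: choose a trivialisation of $\vartheta$ (this is where stable parallelisability enters) and cover each homotopy automorphism by the identity on fibres. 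This section deloops because it is a monoid map. That is the entire proof; no surgery exact sequence, no infinite-loop-space bookkeeping, no fibrewise rationalisation.

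Your route would succeed if step~(ii) is carried out, but note that the delicate point you flag is exactly what the paper's factorisation absorbs. The issue is not whether $\hAut^{\cong}_\partial(M)$ acts on $\mathbb H_\bullet(M/\partial M;\mathbf L_\bullet)$ and $\mathbf L_\bullet(\bfZ)$ by spectrum maps---it does---but whether the comparison map from the block structure space to the fibre of assembly is itself $\hAut^{\cong}_\partial(M)$-equivariant, not merely $\hAut^{\id}_\partial(M)$-equivariant. Making the normal-invariant map $\widetilde{\mathcal S}^{\Diff}_\partial(M)\to[M/\partial M,G/\mathrm O]$ natural in $\varphi$ requires a compatible choice of stable framing, which is precisely the datum that the intermediate space $\hAut^{*,\cong}_\partial(\vartheta)$ packages. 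So your argument rediscovers the same ingredient, just further downstream; the paper's approach buys you that equivariance for free and replaces your rational section by an actual one.
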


\begin{proof}Following \cite[Sect.\,4.4]{berglund-madsen-rational}, we pick a base point $*\in\partial M$ in the boundary and denote by $\hAut^*_{\partial}(\tau_M^s)$ the topological monoid of homotopy automorphisms $f\colon M\ra M$ which are the identity on the boundary together with a bundle automorphism $\tilde{f}\colon \tau_M^s\rightarrow\tau_M^s$ of the stable tangent bundle which restricts to the identity over the fixed basepoint and whose underlying self-map of $M$ agrees with $f$. We denote by $\hAut^{*}_{\partial,\circ}(\tau_M^s)\subset \hAut^{*}_{\partial}(\tau_M^s)$ the kernel of the map $\hAut^*_{\partial}(\tau_M^s)\ra\pi_0\hAut_\partial (M)$ given by taking homotopy classes and forgetting the bundle map and by $\hAut^{*,\cong}_{\partial}(\tau_M^s)\subset \hAut^{*}_{\partial}(\tau_M^s)$ the preimage of the subgroup $\pi_0\hAut^{\cong}_\partial (M)\subset \pi_0\hAut_\partial (M)$ given by the image of $\BlockDiff_\partial (M)$ in $\hAut_\partial (M)$. Up weak equivalence, there is a canonical map of topological monoids $\BlockDiff_\partial (M)\ra \hAut^{*}_{\partial}(\tau_M^s)$ given by taking derivatives \cite[p.\,116]{berglund-madsen-rational}. This map induces a map of fibre sequences
\begin{center}\label{appendix:universalcover}
\begin{tikzcd}
\BBlockDiff_{\partial,\circ}(M)\arrow[r]\arrow[d]&\BBlockDiff_\partial(M)\arrow[r]\arrow[d]&\oB\pi_0\hAut^{\cong}_\partial(M)\arrow[d,equal]\\
\BhAut^{*}_{\partial,\circ}(\tau_M^s)\arrow[r]&\BhAut^{*,\cong}_\partial(\tau_M^s)\arrow[r]&\oB\pi_0\hAut^{\cong}_\partial(M),
\end{tikzcd}
\end{center}
where $\BlockDiff_{\partial, \circ}(M)\subset \BlockDiff_\partial(M)$ denotes kernel of the map \[\BlockDiff_\partial(M)\lra \pi_0\hAut^{\cong}_\partial(M).\] Berglund and Madsen showed that the two fibres in this diagram are nilpotent spaces \cite[Prop.\,4.8, Cor.\,4.14]{berglund-madsen-rational} and that the map between them is a rational equivalence \cite[Cor.\,4.21]{berglund-madsen-rational}. By an application of the Serre spectral sequence, this implies that the middle vertical map is a rational cohomology isomorphism, so to finish the proof, it suffices to show that the map $\BhAut^{*,\cong}_\partial(\tau_M^s)\ra \BhAut^{\cong}_\partial(M)$ admits a section. Fixing a trivialisation of $\tau_M^s$, every homotopy automorphism of $M$ is covered by a unique bundle automorphism that agrees with the identity on each fibre with respect to the chosen trivialisation, which induces a section of the map $\hAut^{*}_\partial(\tau_M^s)\ra \hAut_\partial(M)$. Restricting components and taking classifying spaces, this induces a section as wished.
\end{proof}

In the case $M=W_{g,1}$, the argument for \cref{appendix:prop2} shows more generally that $\BBlockDiff^{\Gamma}_\partial(M)\rightarrow \BhAut^{\cong,\Gamma}_\partial(M)$ is split injective on rational cohomology for any subgroup $\Gamma\le G_g$.

\subsection*{\circled{$3$}}
Combining the previous two propositions, we conclude that the map \[\oH^*(\BhAut^{\cong,\Gamma}_\partial(W_{g,1});\bfQ)\ra\oH^*(\BDiff^{\Gamma}_\partial(W_{g,1});\bfQ)\] is injective in degrees $*\le2n-4$, which finishes the proof of \cref{appendix:theorem} when combined with the following proposition.

\begin{prop}\label{appendix:prop4}For $2n\ge6$ and a finite index subgroup $\Gamma\le G_g$, the induced map
\[\oH^*(\oB \Gamma;\bfQ)\lra\oH^*(\BhAut^{\cong,\Gamma}_\partial (W_{g,1});\bfQ)\]
is an isomorphism for $*<n$ and a monomorphism for $*=n$. Moreover, if $g\ge 4-c$, then this map is an isomorphism for  $*<n+\min(n-1,g-1+c)$ and a monomorphism for $*=n+\min(n-1,g-1+c)$, where $c=0$ if $n$ is even and $c=1$ if $n$ is odd.
\end{prop}
\begin{proof}
We consider the rational Serre spectral sequence 
\begin{equation}\label{equ:sss}E^{p,q}_2=\oH^p(\oB\pi_0\hAut^{\cong,\Gamma}_\partial(W_{g,1});\oH^q(\BhAut^{\id}_\partial (W_{g,1});\bfQ))\implies \oH^{p+q}(\BhAut^{\cong,\Gamma}_\partial (W_{g,1});\bfQ).\end{equation}
of the fibration sequence 
\[\BhAut^{\id}_\partial (W_{g,1})\lra \BhAut^{\cong,\Gamma}_\partial (W_{g,1})\lra\oB\pi_0\hAut^{\cong,\Gamma}_\partial(W_{g,1}).\] By \cite[Prop.\,5.6]{berglund-madsen-rational}, the rational homotopy Lie algebra $\pi_{*+1}(\BhAut^{\id}_\partial (W_{g,1}))\otimes\bfQ$ is isomorphic, as a module over $\bfQ[\pi_0\hAut_\partial(W_{g,1})]$, to a graded sub Lie algebra of the Lie algebra $\Der^+\bfL(H)$ of positive degree derivations of the free graded Lie algebra $\bfL(H)$ on the graded vector space $H\coloneqq \oH_n(W_{g,1};\bfQ)\cong\bfQ^{2g}$ concentrated in degree $(n-1)$. In particular, the action of $\pi_0\hAut^{\cong,\Gamma}_\partial(W_{g,1})$ on $\pi_{*+1}(\BhAut^{\id}_\partial (W_{g,1}))\otimes\bfQ$ factors through the homology action \[\pi_0\hAut^{\cong,\Gamma}_\partial(W_{g,1})\lra \Gamma\subset  \GL_{2g}(\bfZ).\] Moreover, as a result of \cite[Prop.\,7.9]{berglund-madsen-rational}, there is an isomorphism of $\bfQ[\pi_0\hAut^{\cong,\Gamma}_\partial(W_{g,1})]$-modules
\[\oH^q(\BhAut^{\id}_\partial (W_{g,1});\bfQ)\cong \oH^{q}_{\CE}(\pi_{*+1}(\BhAut^{\id}_\partial (W_{g,1}))\otimes\bfQ),\] where the right hand side is the  Chevalley--Eilenberg cohomology of the graded homotopy Lie algebra.
Consequently, the action on these groups factors through $\Gamma$ as well. Since the homology action $\pi_0\hAut^{\cong,\Gamma}_\partial(W_{g,1})\ra \Gamma$ is surjective by definition and has finite kernel as a result of \cite[Prop.\,5.3]{berglund-madsen-rational}, it induces an identification of the $E^2$-page of \eqref{equ:sss} of the form
\begin{equation}\label{equ:E2}E^{p,q}_2=\oH^p(\oB\pi_0\hAut^{\cong,\Gamma}_\partial(W_{g,1});\oH^q(\BhAut^{\id}_\partial (W_{g,1});\bfQ))\cong \oH^p(\BGamma;\oH^q(\BhAut^{\id}_\partial (W_{g,1});\bfQ)).\end{equation}
As $H$ is concentrated in degree $(n-1)$, the Lie algebra $\Der^+\bfL(H)\cong \pi_{*+1}(\BhAut^{\id}_\partial (W_{g,1}))\otimes\bfQ$ is concentrated in degrees $k(n-1)$ for $k\ge1$. In particular the space $\BhAut^{\id}_\partial (W_{g,1})$ is rationally $(n-1)$-connected, so the $E_{2}$-page \eqref{equ:E2} vanishes for $q\le n-1$ except for the bottom row $p=0$, which implies the first claim. To prove the second part of the statement, note that in degrees $*< 2n-2$, the Lie algebra $\Der^+\bfL(H)$ is only nontrivial in degree $n-1$, so the reduced rational cohomology \[\widetilde{\oH}^q(\BhAut^{\id}_\partial (W_{g,1});\bfQ)\cong \widetilde{\oH}^{q}_{\CE}(\pi_{*+1}(\BhAut^{\id}_\partial (W_{g,1}))\otimes\bfQ)\] is for $*< 2n-1$ only nontrivial in degree $n$ where it is isomorphic to the dual $(\pi_{n}(\BhAut^{\id}_\partial (W_{g,1}))\otimes\bfQ)^\vee$. To prove the claim, it thus suffices to show that $\oH^p(\BGamma;(\pi_{n}(\BhAut^{\id}_\partial (W_{g,1}))\otimes\bfQ)^\vee)$ vanishes for $p< g-1+c$ and $g\ge4-c$, since this would imply that the $E^2$-page of \eqref{equ:sss} is for $p+q< n+\min(n-1,g-1+c)$ concentrated in the bottom row. By the discussion above, we have
\[(\pi_{n}(\BhAut^{\id}_\partial (W_{g,1}))\otimes\bfQ)^\vee\subset\Der^+\bfL(H)_{n-1}^\vee\cong \Hom(H,\bfL^2(H))^\vee\cong H\otimes \bfL^2(H)^\vee\subset H\otimes (H^{\otimes 2})^\vee\cong H^{\otimes 3} \] as a $\bfQ[\Gamma]$-module, where $\bfL^2(H)\subset \bfL(H)$ is the subspace spanned by brackets of length two. Here we used that $\bfL(H)$ is a free Lie algebra for the first isomorphism and the standard (symplectic or orthogonal) form for the last isomorphism. Since short exact sequences of finite dimensional $\bfQ[\Gamma]$-modules split (this uses $g\ge2$ and that $\bfQ$-algebraic groups $\Sp_{2g}(\bfQ)$ and $\oSO_{2g}(\bfQ)$ are simple, see for instance \cite[Thm\,A.1,Prop.\,B.4]{berglund-madsen-rational}), we conclude that $(\pi_{n}(\BhAut^{\id}_\partial (W_{g,1}))\otimes\bfQ)^\vee$ is a direct summand of $H^{\otimes 3}$, so it suffices to show that $\oH^p(\BGamma;H^{\otimes 3})$ vanishes for $p< g-1+c$ and $g\ge4-c$. By enhancements due to Tshishiku \cite[Thm 1.1, Thm 1.2]{tshishiku-borel-range} of Borel's vanishing ranges for the cohomology of arithmetic groups \cite{borel_cohoarith, borel_cohoarith2}, the cohomology of $\Gamma$ with coefficients in an irreducible finite-dimensional algebraic representation vanishes in this range, so we have $\oH^p(\BGamma;H^{\otimes 3})\cong \oH^p(\BGamma;\bfQ)\otimes (H^{\otimes 3})^{\Gamma}$, where $(-)^{\Gamma}$ denotes taking invariants. The proof will conclude by arguing that these invariants vanish. This can be simplified in two ways: firstly, if $n$ is even, after possibly restricting to the subgroup $\Gamma\cap \oSO_{g,g}(\bfZ)\le \Gamma$, we may assume that $\Gamma$ is contained in the connected algebraic subgroup $\oSO_{g,g}(\bfQ)\subset \oO_{g,g}(\bfQ)$ and secondly, it suffices to show that the invariants vanish after tensoring with $\bfR$. Since $\Sp_{2g}(\bfQ)$ and $\oSO_{g,g}(\bfQ)$ are connected, semi-simple and have no compact factors, the subgroup $\Gamma\subset \Sp_{2g}(\bfR)$ or $\Gamma\subset \oSO_{g,g}(\bfR)$ (depending on whether $n$ is even or odd) is a lattice \cite[Thm 7.8]{BorelHarish-Chandra} and moreover Zariski dense by Borel's density theorem \cite{BorelDensity}. This implies that the $\Gamma$-invariants of $H^{\otimes 3}\otimes\bfR$ agree with those of $\Sp_{2g}(\bfR)$ or $\oSO_{g,g}(\bfR)$, which in turn vanish by classical invariant theory.
\end{proof}

\begin{rem}\ 
\begin{enumerate}[(i)]
\item Note that the first part of the proof of \cref{appendix:prop4} did not require $\Gamma\le G_g$ to be of finite index. Since Propositions~\ref{appendix:prop1} and~\ref{appendix:prop2} are valid in this generality as well, \cref{appendix:theorem} holds for arbitrary subgroups $\Gamma\le G_g$ in the range $*\le n$ for $2n\ge6$ and $*\le 2$ for $2n=2$.
\item Similar arguments to those of \cref{appendix:prop4} were used in \cite{KrannichSlope1} to improve the ranges for rational homological stability of $\BhAut_\partial(W_{g,1})$, $\BBlockDiff_\partial(W_{g,1})$, and a truncation of $\BDiff_\partial(W_{g,1})$.
\end{enumerate}
\end{rem}

\bibliographystyle{alpha}
\bibliography{geocycles.bib,appendix.bib}

\end{document}